\newtheorem{theorem}{Theorem}
\numberwithin{theorem}{section}
\newtheorem{lemma}[theorem]{Lemma}
\newtheorem{proposition}[theorem]{Proposition}
\theoremstyle{definition}
\newtheorem{definition}[theorem]{Definition}
\theoremstyle{remark}
\newtheorem{remark}[theorem]{Remark}
\DeclareMathOperator{\dist}{dist}
\DeclareMathOperator{\diam}{diam}
\DeclareMathOperator{\dom}{Dom}
\DeclareMathOperator{\sign}{sign}
\numberwithin{equation}{section}
\begin{document}

\title[Nonlocal elliptic equations with singular nonlinearities]{A  large class of nonlocal elliptic equations with singular nonlinearities}

\author[R. Arora]{Rakesh Arora}
\address[R. Arora]{Department of Mathematical Sciences, Indian Institute of Technology Varanasi (IIT-BHU), Uttar Pradesh-221005, India}
\email{\tt rakesh.mat@iitbhu.ac.in, arora.npde@gmail.com}

\author[P.-T. Nguyen]{Phuoc-Tai Nguyen}
\address[P.-T. Nguyen]{Department of Mathematics and Statistics, Masaryk University, Brno, Czech Republic}
\email{\tt ptnguyen@math.muni.cz}

\author[V.D. R\u adulescu]{Vicen\c tiu D. R\u adulescu}
\address[V.D. R\u adulescu]{Faculty of Applied Mathematics, AGH University of Science and Technology, 30-059 Krak\'ow, Poland \newline
 Department of Mathematics, University of Craiova,  200585 Craiova, Romania \newline
 Simion Stoilow Institute of Mathematics of the Romanian Academy, 21 Calea Grivitei Street, 010702 Bucharest, Romania}
\email{\tt radulescu@inf.ucv.ro}

%\date{\today}

\begin{abstract}
	 In this work, we address the questions of existence, uniqueness, and boundary behavior of the positive weak-dual solution of equation $\mathbb{L}_\gamma^s u = \mathcal{F}(u)$, posed in a $C^2$ bounded domain $\Omega \subset \mathbb{R}^N$, with appropriate homogeneous boundary or exterior Dirichlet conditions. The operator $\mathbb{L}_\gamma^s$ belongs to a general class of nonlocal operators including typical fractional Laplacians such as restricted fractional Laplacian, censored fractional Laplacian and spectral fractional Laplacian. The nonlinear term $\mathcal{F}(u)$ covers three different amalgamation of  nonlinearities: a purely singular nonlinearity $\mathcal{F}(u) = u^{-q}$ ($q>0$), a singular nonlinearity with a source term $\mathcal{F}(u) = u^{-q} + f(u)$,  and a singular nonlinearity with an absorption term $\mathcal{F}(u) = u^{-q}-g(u)$. Based on a delicate analysis of the Green kernel associated to $\mathbb{L}_\gamma^s$, we develop a new unifying approach that empowered us to construct a theory for equation $\mathbb{L}_\gamma^s u = \mathcal{F}(u)$. In particular, we show the existence of two critical exponents $q^{\ast}_{s, \gamma}$ and $q^{\ast \ast}_{s, \gamma}$ which provides a fairly complete classification of the weak-dual solutions via their  boundary behavior. Various types of nonlocal operators are discussed to exemplify the wide applicability of our theory.
	
\smallskip
	
\noindent\textsc{Key words}: singular nonlinearities; nonlocal elliptic equations; Green function; weak-dual solution; sub-super solution method.

\smallskip
	
\noindent\textsc{2020 Mathematics Subject Classification}: 35A01, 35B33, 35B65, 35J61, 35J75, 47G20.

\end{abstract}

%\end{abstract}

\maketitle

\tableofcontents

\section{Introduction}
The development of nonlocal analysis during the last few decades has been profoundly influenced by the attempts to understand various real-world phenomena from numerous areas such as probability and statistics, finance, mathematical physics, and other scientific disciplines. A great amount of attention has been paid to several aspects of elliptic equations involving different kinds of operators varying from typical fractional Laplacians to a more general integro-differential operator. Lately, the study of elliptic equations involving a wide class of nonlocal operators determined by two-sided estimates of the associated Green kernel has come to the light (see for instance \cite{AbaGomVaz_2019, BonFigVaz_2018,  BonSirVaz_2015, BonVaz_2014, BonVaz_2015, ChaGomVaz_2020, HuNg_source, HuNg_absorption} and references therein).

In this work, we focus on elliptic problems involving a large class of nonlocal operators $\mathbb{L}_\gamma^s$ in the presence of a singular nonlinearity and a source or an absorption term of the form
\begin{equation}\label{eq:mainprob}
    \mathbb{L}_\gamma^s u = \mathcal{F}(u),\quad u>0 \quad \text{in} \ \Omega,
\end{equation}
with homogeneous  boundary or exterior Dirichlet condition
\begin{equation}\label{eq:boundcond}
   u=0 \ \text{on} \ \partial \Omega \ \text{or in} \ \Omega^c  \ \text{if applicable},
\end{equation}
where $\Omega$ is a $C^2$ bounded domain in $\mathbb{R}^N$ ($N \geq 1$), $\mathcal{F}$  is a nonlinear term representing three different kinds of singular nonlinearities and $\mathbb{L}_\gamma^s$ is a local or nonlocal operator. The parameters $\gamma$ and $s$ depict the interior point singularity and the boundary behavior of the  Green function associated to $\mathbb{L}_{\gamma}^s$ (see Section \ref{Main assumptions} for more details).

The study of elliptic or integral equations involving singular terms started in the early sixties by the works of Fulks and Maybee \cite{Fulks.al_1960}, originating from the models of steady-state temperature distribution in an electrically conducting medium. Later on, equations with singular nonlinearities attracted the attention of many researchers. On the one hand, the study of such types of equations is a challenging mathematical problem. On the other hand, they appear in a variety of real-world models. We refer here to \cite{Fulks.al_1960} for applications in the theory of heat conduction in electrically conducting materials, \cite{Nachman.al_1986} in the pseudo-plastic fluids, \cite{Stuart_1974} for Chandrasekhar equations in radiative transfer, and \cite{Diaz.al_1987} in non-Newtonian fluid flows in porous media and heterogeneous catalysts. \vspace{0.1cm}

One of the seminal breakthroughs in this area was the work of Crandall, Rabinowitz, and Tartar \cite{Crandall.al_1977} which majorly provoked the research in the direction of the study of singular nonlinearities. Afterward, a large number of publications has been devoted to investigating a diverse spectrum of issues revolving around local/nonlocal elliptic equations involving the singular nonlinearities (see for example \cite{Diaz.al_1987, Fulks.al_1960, Stuart_1974} and monographs \cite{Ghergu.al_2008, Hernandez.al_2006}).  Let us recall some known results in the literature for both local and nonlocal elliptic equations with singular nonlinearities. \vspace{0.1cm}

In the local case, Crandall, Rabinowitz and Tartar \cite{Crandall.al_1977} studied the singular boundary value problem \eqref{eq:mainprob}--\eqref{eq:boundcond} with $\mathbb{L}_\gamma^s= -\Delta$, {\it i.e.} $s=\gamma=1$, and $\mathcal{F}(u)= u^{-q}$ for $q>0$. By using the classical method of sub and supersolutions on the non-singular approximating problem, they proved the existence and uniqueness results of the classical solution of our original problem. In addition, by exploiting the second-order ODE techniques and localization near the boundary, they showed the existence of a critical exponent which is equal to $1$ and derived the boundary behavior of the solution. Thanks to Stuart \cite{Stuart_1976}, similar results on the existence of solutions were obtained using, this time, an approximation argument with respect to the boundary condition. Actually, both papers \cite{Crandall.al_1977, Stuart_1976} provide results for more general differential operators
with smooth coefficients, not necessarily in divergence form, and for non-monotone
nonlinearities as well. The same model of elliptic equations with purely singular nonlinearity was considered by Lazer and McKenna \cite{Lazer.al_1991} in which they simplified the proof of the boundary behavior of classical solutions by constructing appropriate sub and super solutions. For similar works concerning the local elliptic or integral equations with a  purely singular nonlinearity, we refer to \cite{Zheng.al_2004, Diaz.al_1987, Gomes_1986, delpino_1992, Stuart_1974} and for singular nonlinearity with source terms or absorption terms, we refer to \cite{Coc.al_1989, Stuart_1976, Can.al_2004, haitao_2003, Hirano.al_2004} with no intent to furnish an exhaustive list. \vspace{0.1cm}

Concerning the nonlocal case, there have been very few available works on singular equations in the literature, mainly dealing with the well-known fractional Laplace operator (see for instance \cite{adi_gia_santra_2018, Arora_Gia_Goel_Sree_2020, Arora_Gia_Warn_2021, Arora_Rad_2021, Barrios.al_2015, Giacomoni.al_2017}). In \cite{adi_gia_santra_2018}, Adimurthi, Giacomoni and Santra studied problem \eqref{eq:mainprob}--\eqref{eq:boundcond} with $\mathbb{L}_{\gamma}^s$ being the fractional Laplace operator, namely $\mathbb{L}_\gamma^s = (-\Delta)^s$, and  $\mathcal{F}(u)= u^{-q}$ with $q>0$. They discussed the existence and uniqueness of the classical solution using the approximation method and comparison principle. Moreover, by employing the integral representation via the Green function and maximum principle, they showed the existence of a critical exponent which is equal to $1$ and obtained the sharp boundary behavior of the weak solution. Very lately, in \cite{Arora_Rad_2021}, Arora and R\u adulescu studied the problem \eqref{eq:mainprob}--\eqref{eq:boundcond} with $\mathbb{L}_\gamma^s = (-\Delta) + (-\Delta)^s$, and  $\mathcal{F}(u)= u^{-q}$ with $q>0$. They established the existence, uniqueness and regularities properties of the weak solution by deriving uniform apriori estimates
and using the method of approximation.  When  $\mathbb{L}_\gamma^s = (-\Delta)^s$ and $\mathcal{F}(u)= u^{-q} + \lambda f(u)$ for $q>0$, $\lambda >0$ and $f$ satisfies subcritical, critical growth or exponential nonlinearities, we refer to the work \cite{Arora_Gia_Goel_Sree_2020, Barrios.al_2015, Giacomoni.al_2017}. For further issues on nonlocal singular problems, the interested reader can consult the
bibliographic references in  \cite{Arora_Gia_Warn_2021, Arora_Rad_2021, Mukherjee.al_2016}.

Recently, elliptic and parabolic equations involving a large class of operators characterized by its Green function has been studied in a series of works \cite{AbaGomVaz_2019, Arora_Nguyen_2022, BonFigVaz_2018, BonSirVaz_2015, BonVaz_2014, BonVaz_2015, ChaGomVaz_2020, Gomez_Vaz_2019} from the PDE point of view and in \cite{Kim_Song_Vondra_2020, Kim_Song_Vondrac_2020} from the probabilistic point of view. An important contribution in this research direction is the work of Bonforte and V\'azquez \cite{BonSirVaz_2015} where the concept of weak-dual solutions was introduced to investigate the existence and uniqueness of solutions of nonlinear diffusion evolution equations. Later on, Bonforte, Figalli and V\'azquez \cite{BonFigVaz_2018} studied the qualitative properties of weak-dual solutions of semilinear elliptic equations \eqref{eq:mainprob}-\eqref{eq:boundcond} with $\mathcal{F}(u) = u^p$, $0 < p < 1$. More precisely, they proved that the weak-dual solution $u$ has the following sharp boundary estimates
\[
 \begin{aligned}
u(x) \asymp \left\{
	\begin{array}{ll}
	  \delta(x)^{\min\{\gamma, \frac{2s}{1-p}\}} &  \text{ if } 2s + p \gamma \neq \gamma,
	\vspace{0.1cm}\\
\delta(x)^\gamma \ln^{\frac{\gamma}{2s}}\left(\frac{d_\Omega}{\delta(x)}\right)&  \text{ if } 2s + p \gamma = \gamma,
	\end{array}
	\right.
 \end{aligned}
\]
where $d_{\Omega}=\textrm{diam}(\Omega)$ and $\delta(x)=\dist(x,\mathbb{R}^N \setminus \Omega)$.
Thereafter, Abatangelo, G\'omez-Castro and V\'azquez in \cite{AbaGomVaz_2019} and Chan, G\'omez-Castro and V\'azquez in \cite{ChaGomVaz_2020} proved some more results related to the boundary behavior of solutions and eigenvalue problems for equations involving general nonlocal operators. Very lately, Huynh and Nguyen studied the semilinear elliptic equations involving superlinear source terms  $\mathcal{F}(u) = u^p + \mu$ (see \cite{HuNg_source}) and absorption terms $\mathcal{F}(u) = \mu - g(u)$ (see \cite{HuNg_absorption}), where $\mu$ is a Radon measure on $\Omega$, $p>1$ and $g: \mathbb{R} \to \mathbb{R}$ is non-decreasing continuous function.
By developing a set of unifying techniques based on a fine analysis of the Green function, they established the existence/non-existence and uniqueness/multiplicity and qualitative properties of weak dual solutions.

Apart from these publications, we are unaware of works on the existence and boundary behavior of the solution of elliptic problem driven by different kinds of local or nonlocal operators, whilst the results for equations involving singular nonlinearities seem to be completely missing. The interaction between the properties of the operator and the presence of the singular nonlinearities may lead to
disclose new types of difficulties in the analysis and requires a novel approach.
\vspace{0.1cm}\\
\textbf{Objectives of this paper.} Motivated from the above mentioned works, we aim to set up a general framework and to develop unifying techniques which enable us to study the existence, uniqueness and the boundary behavior of weak-dual solution to problem \eqref{eq:mainprob}--\eqref{eq:boundcond} for various types of operators $\mathbb{L}_{\gamma}^s$ and different types of singular nonlinearities $\mathcal{F}$. The class of operators $\mathbb{L}_\gamma^s$ under considerations is assumed to satisfy a set of mild hypotheses  and cover famous nonlocal operators. The elementary assumptions on $\mathbb{L}_\gamma^s$ is described in terms of two-sided estimates of the associated Green function and new convexity inequality. The main results of this article are novel for various case of operators, cover and extend the aforementioned works regarding problems with singular nonlinearities in the literature, and complement the works of Bonforte, Figalli and V\'azquez in \cite{BonFigVaz_2018}, and Huynh and Nguyen in \cite{HuNg_source, HuNg_absorption}.
\vspace{0.1cm}\\
\textbf{Outline of the paper.} The rest of the paper is organized as follows. In Section \ref{def-assump}, we
give the definition of function spaces and present the main assumptions on the operator $\mathbb{L}_{\gamma}^s$. We also give a list of examples of operators to which our theory can be applied. In Section \ref{Main-prob-results}, we introduce the main problems, the notion of solutions and statement of main results. In section \ref{prelim-kato-ineq}, we recall some crucial preliminary results and prove the Kato's inequality which plays the role of comparison principle in our general setting. Section \ref{PS-problem} is devoted to study the purely singular problem \eqref{sing:pure} (see below). First, we establish sharp estimates regarding the action of the Green operator on negative powers of distance function perturbed with logarithmic growth. This in turn enables to prove the existence of a weak-dual solution via approximation method and Schauder fixed point theorem. As an application of our new Kato type inequality, we show the uniqueness and boundary behavior of weak-dual solutions. In section \ref{source:absorption}, we study the semilinear elliptic equations \ref{sing:f(v)} and \ref{sing+absorption} (see below) involving a singular nonlinearity and a source/absorption term and prove the existence of a weak-dual solution via sub-super solution method. In section \ref{example:singpotential}, we discuss the applicability of our results on the Laplace operator perturbed by Hardy potential.
\section{Definitions,  assumptions and examples}\label{def-assump}
\subsection{Functional spaces}
Throughout the paper, the symbol $c,C,c_i,C_i$ ($i=1,2,\dots$) will be used to denote the constants which can be calculated or estimated through the data. If necessary, we will write $C=C(a,b,\dots)$ to emphasize the dependence of $C$ on $a,b$. We fix some shorthand notations: for two functions $f,g$, we write $f \lesssim g$ if there exists a $C>0$ such that $f \leq C g$ and $f \asymp g$ if $f \lesssim g$ and $g \lesssim f$. We also write $a \wedge b := \min\{a,b\}$ and $a \vee b := \max\{a,b\}$. Finally, we assume $\Omega$ is a $C^2$ bounded domain in $\mathbb{R}^N$ ($N \geq 2$) and denote $\delta(x)=\dist(x,\mathbb{R}^N \setminus \Omega)$. We also denote $d_{\Omega}=2\textrm{diam}(\Omega)$. For a Borel set $A \subset \mathbb{R}^N$, ${\bf 1}_A$ denotes the indicator function of $A$.

For a given non-negative function $\rho$ and $p \in [1, \infty)$, the weighted Lebesgue space $L^{p}(\Omega, \rho)$ is defined as
$$L^{p}(\Omega, \rho):= \left\{f: \Omega \to \mathbb{R} \text{ Borel measurable such that}  \int_\Omega |f|^p \rho ~dx < \infty \right\}$$ endowed with the norm $$\|f\|_{L^p(\Omega, \rho)} = \left(\int_{\Omega} |f|^p \rho ~dx\right)^\frac{1}{p}$$ and
$$L^{p}_0(\Omega, \rho):= \left\{f \in L^{p}(\Omega, \rho): f=0 \ \mbox{ on } \partial\Omega\, \ \mbox{or in $\Omega^c$ if applicable} \right\}.$$
We also denote by $\rho L^\infty(\Omega)$ the space
$$ \rho L^\infty(\Omega):= \{f: \Omega \to \mathbb{R} \ | \ \text{there exists} \ v \in L^\infty(\Omega) \ \text{such that} \ f= \rho v\}.$$

For $s \in (0,1)$, the fractional Sobolev space $H^s(\Omega)$ is defined by
\[
H^s(\Omega):= \{ u \in L^2(\Omega): [u]_{s, \Omega}:= \int_{\Omega} \int_{\Omega} \frac{|u(x) - u(y)|^2}{|x-y|^{N+2s}} ~dx ~dy < + \infty\},
\]
which is a Hilbert space equipped with the inner product
\[
\langle u, v\rangle_{H^s(\Omega)} := \int_{\Omega} uv ~dx + \int_{\Omega} \int_{\Omega} \frac{(u(x) - u(y))(v(x)-v(y))}{|x-y|^{N+2s}} ~dx ~dy, \quad u, v \in H^s(\Omega).
\]
The space $H_0^s(\Omega)$ is defined as the closure of $C_c^\infty(\Omega)$ with respect to the norm
\[
\|u\|_{H^s_{0}(\Omega)} := \sqrt{\langle u, u \rangle_{H^s(\Omega)}}.
\]
 Next, we define $H_{00}^s(\Omega)$ given by
 \[
 H_{00}^s(\Omega):= \left\{u \in H^s(\Omega) : \frac{u}{\delta^s} \in L^2(\Omega)\right\}.
 \]
 It can be seen that the space $H_{00}^s(\Omega)$ endowed with the norm
 \[
 \|u\|_{H^s_{00}(\Omega)} :=  \left(\int_{\Omega} \left(1+ \frac{1}{\delta^{2s}} \right)|u|^2 ~dx + \int_{\Omega} \int_{\Omega} \frac{|u(x) - u(y)|^2}{|x-y|^{N+2s}} ~dx ~dy \right)^{\frac{1}{2}}
 \]
 is a Banach space. Speaking roughly, the space $H_{00}^s(\Omega)$ is the space of functions in $H^s(\Omega)$ satisfying the Hardy's inequality. By  \cite[Subsection 8.10]{Bhattacharyya-12}, there holds
 \[
 \begin{aligned}
H_{00}^s(\Omega) =  \left\{
	\begin{array}{ll}
	  H_{0}^s(\Omega)&  \text{ if } 0< s< \frac{1}{2},
	\vspace{0.1cm}\\
	H_{00}^\frac{1}{2}(\Omega) \subset H_0^\frac{1}{2}(\Omega) &  \text{ if } s=\frac{1}{2},\\
	 H_0^s(\Omega) &  \text{ if } \frac{1}{2} < s < 1.
	\end{array}
	\right.
 \end{aligned}
 \]
In fact, $H_{00}^s(\Omega)$ is the space of functions in $H^s(\mathbb{R}^N)$ supported in $\Omega$, or equivalently, the trivial
extension of functions in $H_{00}^s(\Omega)$ belongs to $H^s(\mathbb{R}^N)$ (see \cite[Lemma 1.3.2.6]{Grisvard-2011}). Furthermore, $C_c^\infty(\Omega)$ is a dense subset of $H_{00}^s(\Omega)$.  The strict inclusion $H_{00}^\frac{1}{2}(\Omega) \subset H_0^\frac{1}{2}(\Omega)$ holds since $1 \in H_0^\frac{1}{2}(\Omega)$ but
$1 \not\in H_{00}^\frac{1}{2}(\Omega).$
\subsection{Main assumptions}\label{Main assumptions}
We consider a general family of linear operators $\mathbb{L}_\gamma^s$ indexed on two parameters depicting the interior point singularity and boundary behavior of the Green kernel. The operator $\mathbb{L}_\gamma^s$ includes the three most typical fractional Laplace operators, as well as the classical Laplace operator.

\textit{In the sequel, for the sake of simplicity, we write $\mathbb{L}$ for $\mathbb{L}_\gamma^s$.}

\noindent \textbf{Assumptions on $\mathbb{L}$.} We impose the following assumptions on the operator $\mathbb{L}$.

\begin{enumerate}[resume,label=(L\arabic{enumi}),ref=L\arabic{enumi}]
	\item \label{L1} $\mathbb{L}: C^{\infty}_c(\Omega) \subset L^2(\Omega) \to L^2(\Omega)$ is a positive, symmetric operator.
\end{enumerate}
Under the above assumption, we infer from the standard spectral theory that $\mathbb{L}$ admits a positive, self-adjoint extension $\widetilde{\mathbb{L}}$, which is a Friedrich's extension of $\mathbb{L}.$ Furthermore, $\mathbb{H}(\Omega) := \dom(\widetilde{\mathbb{L}}^\frac{1}{2})$ is a Hilbert space equipped with the inner product
\[
(u,v) \mapsto \langle u,v \rangle_{L^2(\Omega)} + \mathbb{B}(u,v) , \quad u, v \in \mathbb{H}(\Omega),
\]
where the $\mathbb{B}$ is a bilinear form defined as
\[
\mathbb{B}(u,v):= \langle \widetilde{\mathbb{L}}^\frac{1}{2} u, \widetilde{\mathbb{L}}^\frac{1}{2} v \rangle_{L^2(\Omega)}, \quad u, v \in \mathbb{H}(\Omega).
\]
The inner product induces the following norm on $\mathbb{H}(\Omega)$
\begin{equation} \label{norm0}
 \sqrt{\|u\|^2_{L^2(\Omega)} + \mathbb{B}(u,u)}, \quad u \in \mathbb{H}(\Omega).
\end{equation}
\textit{Hereafter, without any confusion we use the same notation $\mathbb{L}$ to denote the extension $\widetilde{\mathbb{L}}$.}

\begin{enumerate}[resume,label=(L\arabic{enumi}),ref=L\arabic{enumi}]
	\item \label{L2}
    There exists a constant $\Lambda >0$ such that
    $$
    \|u\|^2_{L^2(\Omega)} \leq \Lambda \langle \mathbb{L} u, u\rangle_{L^2(\Omega)},  \quad \text{for all} \ u \in C_c^\infty(\Omega).
    $$
\end{enumerate}
Under the assumption \eqref{L2} and the fact that $C_c^\infty(\Omega)$ is dense in $\mathbb{H}(\Omega)$, we have
\[
\|u\|_{L^2(\Omega)}^2 \leq \Lambda  \langle \mathbb{L}^\frac{1}{2} u, \mathbb{L}^\frac{1}{2} u \rangle_{L^2(\Omega)} = \Lambda \mathbb{B}(u,u), \quad \text{for all} \ u \in \mathbb{H}(\Omega),
\]
which further implies the norm in \eqref{norm0} and the norm $\|u\|_{\mathbb{H}(\Omega)} := \sqrt{\mathbb{B}(u,u)}$ are equivalent  on $\mathbb{H}(\Omega).$ The norm $\|\cdot\|_{\mathbb{H}(\Omega)}$ is induced by the inner product
\begin{equation} \label{innerH}
\langle u, v\rangle_{\mathbb{H}(\Omega)} := \mathbb{B}(u,v), \quad \text{for} \ u,v \in \mathbb{H}(\Omega).
\end{equation}

We additional assume that
\begin{enumerate}[resume,label=(L\arabic{enumi}),ref=L\arabic{enumi}]
	\item \label{L3}
	 $\mathbb{H}(\Omega) = H^s_{00}(\Omega)$.
\end{enumerate}
This assumption is indicated and motivated by the fact that it is fulfilled in the case of well-know fractional Laplace operators.

Next we assume that the following convexity inequality holds.
\begin{enumerate}[resume,label=(L\arabic{enumi}),ref=L\arabic{enumi}]
	\item \label{L4}
	For $u,v\in \mathbb{H}(\Omega) \cap L^\infty(\Omega)$ with $v \geq 0$ and smooth convex function $p \in C^{1,1}(\mathbb{R})$ with $p(0)=p'(0)=0$, there holds
	\[
	\left\langle p(u), v \right\rangle_{\mathbb{H}(\Omega)} \leq \left\langle  u,  p'(u)  v \right\rangle_{\mathbb{H}(\Omega)}.
	\]
\setcounter{enumi}{0}
\end{enumerate}

\noindent \textbf{Assumptions on the inverse of $\mathbb{L}$.} We also require the existence of the inverse operator of $\mathbb{L}$.

\begin{enumerate}[resume,label=(G\arabic{enumi}),ref=G\arabic{enumi}]
	\item \label{G1}
There exists an operator $\mathbb{G}^{\Omega}$ such that for every $f \in C^{\infty}_c(\Omega)$,  one has
	\begin{equation}\label{eq:homolinear}
		\mathbb{L} [\mathbb{G}^{\Omega} [f]] = f \quad \text{ a.e. in } \Omega
	\end{equation}
	In other words,  $\mathbb{G}^{\Omega}$ is a right inverse of $\mathbb{L}$  and for every $f \in C^{\infty}_c(\Omega)$, one has $\mathbb{G}^{\Omega}[f] \in \dom(\mathbb{L}) \subset \mathbb{H}(\Omega)$.
\end{enumerate}
	
\begin{enumerate}[resume,label=(G\arabic{enumi}),ref=G\arabic{enumi}]
	\item \label{G2}
	 The operator $\mathbb{G}^{\Omega}$ admits a Green kernel $G^{\Omega}$, namely
	$$
	\mathbb{G}^{\Omega}[f](x) = \int_{\Omega} G^{\Omega}(x,y)f(y)~dy,\quad x \in \Omega,
	$$
where $G^{\Omega}: \dom(G^{\Omega}) := \Omega \times \Omega \setminus \{(x,x): x \in \Omega\} \to \mathbb{R}^+$ is Borel measurable, symmetric and satisfies the following two-sided estimate
	\begin{equation} \label{G-est}
		G^{\Omega} (x,y) \asymp \dfrac{1}{|x-y|^{N-2s}} \left( \dfrac{\delta(x)}{|x-y|} \wedge 1 \right)^{\gamma} \left( \dfrac{\delta(y)}{|x-y|} \wedge 1 \right)^{\gamma}, \; x,y \in \dom(G^{\Omega}),
	\end{equation}
	with $s,\gamma \in (0,1]$ and $N>2s$.
\end{enumerate}

Finally, we require the continuity property of $G^\Omega$
\begin{enumerate}[resume,label=(G\arabic{enumi}),ref=G\arabic{enumi}]
	\item \label{G3}
For $x \in \Omega$ and any sequence ${x_n} \subset \Omega$ such that $x_n \to x$ as $n \to \infty$, we have $G^{\Omega}(x_n,y) = G^{\Omega}(x, y)$ for a.e. $x \neq y \in \Omega$.
\setcounter{enumi}{0}
\end{enumerate}
\subsection{Examples} \label{subsec:examples} At first glance, it seems that the set of assumptions in Subsection \ref{Main assumptions} is quite restrictive, however there are several local and nonlocal operators satisfying these assumptions, as shown below. It was proved in \cite{HuNg_absorption} that assumptions \eqref{L1}--\eqref{L3} and \eqref{G1}--\eqref{G2} are fulfilled for the following operators. Therefore, it is sufficient to verify only \eqref{L4} and \eqref{G3}.
\vspace{0.1cm}\\
\textbf{The restricted fractional Laplacian (RFL).}
A famous example of nonlocal operator satisfying the set of assumptions in Subsection \ref{Main assumptions} is the restricted fractional Laplacian $\mathbb{L} = (-\Delta)^s_{\text{RFL}}$ defined, for any $s \in (0,1)$, by
$$(-\Delta)^s_{\text{RFL}} u(x) := a_{N,s}\ P.V. \int_{\mathbb{R}^N} \frac{u(x)-u(y)}{|x-y|^{N+2s}} ~dy, \quad x \in \Omega,$$
restricted to the functions that vanishes outside $\Omega$. Here the abbreviation P.V. stands for  ``the principal value sense". This operator corresponds to the $s$-power of classical Laplace operator and has been intensively studied in the literature; see, e.g.  \cite{CafSil,Sil,Nezza_Palatucci_Valdi_2012,Aba_2015}.

We consider the following bilinear form
\begin{equation}\label{bilinear-form}
	\mathcal{B}(u,v) :=  \frac{1}{2} \int_{\Omega} \int_{\Omega} J(x,y) (u(x) - u(y))(v(x) - v(y)) ~dx ~dy + \int_{\Omega} B(x) u(x) v(x) ~dx
\end{equation}
with
\[
J(x,y) := \frac{a_{N,s}}{|x-y|^{N+2s}} \quad \text{and} \quad B(x) := a_{N,s} \int_{\mathbb{R}^N \setminus\Omega} \frac{1}{|x-y|^{N+2s}} ~dy, \quad x, y \in  \Omega, \ x \neq y
\]
on the domain
\[
\mathcal{D}(\mathcal{B}) := \{ u \in L^2(\Omega): \mathcal{B}(u, u) < + \infty\}.
\]
By \eqref{L1} and \cite[Proposition 5.1]{HuNg_absorption}, we deduce $C_c^\infty(\Omega) \subset \mathcal{D}(\mathcal{B})$ and $\mathcal{D}(\mathcal{B})$ is a complete space with respect to the norm
\[
\|u\|_{\mathcal{D}(\mathcal{B})} := \left( \|u\|_{L^2(\Omega)}^2 + \mathcal{B}(u,u) \right)^\frac{1}{2}.
\]
Moreover,
\[
\langle u,v \rangle_{\mathbb{H}(\Omega)} = \mathbb{B}(u,v) = \langle \widetilde{\mathbb{L}}^\frac{1}{2} u, \widetilde{\mathbb{L}}^\frac{1}{2} u \rangle_{L^2(\Omega)} = \mathcal{B}(u,v) \quad \text{for all} \ u, v \in \mathbb{H}(\Omega).
\]

To show that $(-\Delta)^s_{\text{RFL}}$ satisfies assumption \eqref{L4}, let us take a convex function $p \in C^{1,1}(\mathbb{R})$ such that $p(0) = p'(0) =0$ and $u, v \in \mathbb{H}(\Omega) \cap L^\infty(\Omega)$ with $v \geq 0$. By Lemma \ref{lip-prop}, we obtain
$p(u), p'(u) v \in \mathbb{H}(\Omega)$. The convexity of the function $p$ implies
\begin{equation}\label{convex-ineq}
   (p(a) -p(b))  (A-B) \leq  (a-b)(Ap'(a)-Bp'(b)) \quad \text{and} \quad p(a) \leq p'(a) a,
\end{equation}
for every $a, b \in \mathbb{R}$ and $A, B \geq 0$. By taking $a=u(x)$, $b = u(y)$, $A= v(x)$ and $B=v(y)$ in \eqref{convex-ineq}, we derive
\[
\langle p(u),v \rangle_{\mathbb{H}(\Omega)}
=  \mathcal{B}(p(u), v) \leq \mathcal{B}(u, p'(u) v) = \langle u,p'(u) v \rangle_{\mathbb{H}(\Omega)}.
\]
Hence, \eqref{L4} holds. Finally, \eqref{G3} follows from \cite[Page 467]{Chen-song_1998}.
\vspace{0.1cm}\\
\textbf{The censored fractional Laplacian (CFL).} The censored fractional Laplacian is defined for $s > \frac{1}{2}$ by $$(-\Delta)^s_{\text{CFL}} u(x) := b_{N,s}\ P.V. \int_{\Omega} \frac{u(x)-u(y)}{|x-y|^{N+2s}} ~dy, \quad x \in \Omega.$$
Stochastically speaking, the CFL generates a censored $2s$-stable process restricted in $\Omega$ and killed upon hitting the boundary of $\Omega$ (for more details see \cite{Bogdan.al_2003, Chen.al_2002, Chen.al_2010}).

Assumption \eqref{L4} holds true by repeating the same arguments as in the case of RFL by considering the bilinear form $\mathcal{B}$ as in \eqref{bilinear-form}
with
\[
J(x,y) = \frac{b_{N,s}}{|x-y|^{N+2s}} \quad \text{and} \quad B(x) = 0, \quad x, y \in \Omega, \ x \neq y.
\]
Finally, \eqref{G3} follows from \cite[Theorem 1.1]{Chen.al_2002}.
\vspace{0.1cm}\\
\textbf{The spectral fractional Laplacian (SFL).} The spectral fractional Laplacian is defined for $s \in (0,1)$ by
$$(-\Delta)^s_{\text{SFL}} u(x) := C(N,s)\ P.V. \int_{\mathbb{R}^N} (u(x)-u(y)) J_s(x,y) ~dy + I_s(x) u(x), \quad x \in \Omega,$$
where
$$J_s(x,y):= \frac{s}{\Gamma (1-s)} \int_0^\infty  \frac{p_\Omega(t,x,y)}{t^{1+s}} ~dt , \quad x,y \in \Omega$$
and $$I_s(x):= \frac{s}{\Gamma(1-s)} \int_0^\infty \left(1- \int_{\Omega} p_\Omega(t,x,y) ~dy \right) \frac{1}{t^{1+s}}~dt  \asymp \frac{1}{\delta(x)^{2s}}, \quad x \in \Omega.$$
Here $p_\Omega$ is the heat kernel of the Laplace operator $(-\Delta)$ in $\Omega.$ The SFL generates a subordinate killed Brownian motion (a Brownian motion killed upon hitting the boundary of $\Omega$) and then treated as an $s$-stable process in $(0, \infty)$ (for more details see \cite{Bogdan.al_2009, Song.al_2003}).

Assumption \eqref{L4} follows by repeating the same arguments as in the case of RFL with $J(x,y) = J_s(x,y)$ and $B(x) = I_s(x)$ in \eqref{bilinear-form}. Finally, assumption \eqref{G3} is satisfied
due to the proof of Lemma $14$ in \cite[Page 7]{Abatangelo.al_2017}.
\vspace{0.1cm}\\
\textbf{Sum of two RFLs.} Let $0 < \beta < \alpha <1.$ The sum of $\alpha$-RFL and $\beta$-RFL is defined as
\[
\mathscr{L}_{\alpha,\beta} \, u(x) := (-\Delta)^\alpha_{\text{RFL}} u(x) + (-\Delta)^\beta_{\text{RFL}} u(x) = P.V. \int_{\mathbb{R}^N} J_{\alpha, \beta}(x,y) (u(x)-u(y)) ~dy,
\]
where
\[
J_{\alpha, \beta}(x,y) :=  \frac{a_{N,\alpha}}{|x-y|^{N+2\alpha}} + \frac{a_{N,\beta}}{|x-y|^{N+2\beta}}.
\]
This type of operators has been studied in \cite{Chen_Kim_Song_2012}. By proceeding as in case of RFL with minor modification, we can easily show that  \eqref{L4} is satisfied. Finally, to show \eqref{G3}, we use the following integral representation of the Green kernel given by continuous transition density function (or heat kernel) $p_{\Omega}^{\alpha, \beta}$
\[
G^\Omega(x,y) = \int_0^\infty p_{\Omega}^{\alpha, \beta}(t,x,y) ~dt.
\]
Let $(x,y) \in \dom(G^\Omega)$ and assume that $|x-y|>2\eta>0$. Let $\{(x_n, y_n)\}$ be a sequence in $\dom(G^\Omega)$ converging to $(x,y)$. For $n$ large enough, we have $|x_n -y_n| > \eta.$ From \cite[Theorem 1]{Chen_Kim_Song_2012}, there exists a uniformly dominated integrable function $g_\Omega$ satisfying
\[
\begin{split}
& p_{\Omega}^{\alpha, \beta}(t,x_n,y_n) \\
& \quad \leq C_1 \left\{
	\begin{array}{ll}
	  \left(1 \wedge \frac{\delta(x_n)^\alpha}{\sqrt{t}}\right)  \left(1 \wedge \frac{\delta(y_n)^\alpha}{\sqrt{t}}\right)  \left(t^{-\frac{N}{2 \alpha}} \wedge \left( \frac{t}{|x_n-y_n|^{N+2\alpha}} +  \frac{t}{|x_n-y_n|^{N+2\beta}} \right) \right)&  \text{ if } t \leq T_0,
	\vspace{0.1cm}\\
e^{-\lambda_1 t} \delta(x_n)^\alpha \delta(y_n)^\alpha &  \text{ if } t \geq T_0,
	\end{array}
	\right.\\
	& \quad \leq g_\Omega(t) := C_2 \left\{
	\begin{array}{ll}
	  t \eta^{-N-2\alpha} &  \text{ if } t \leq T_0,
	\vspace{0.1cm}\\
e^{-\lambda_1 t} &  \text{ if } t \geq T_0,
	\end{array}
	\right.
 \end{split}
\]
where $C_i=C_i(T_0, \alpha, \beta, \Omega)$, $i=1,2$, and $\lambda_1$ is the smallest eigenvalue of $\mathscr{L}_{\alpha,\beta}$. This means that $0 \leq p_{\Omega}^{\alpha, \beta}(\cdot,x_n,y_n) \leq g_{\Omega} \in L^1((0,+\infty))$. By the continuity of the heat kernel $p_{\Omega}^{\alpha, \beta}$ (see \cite[Page 4]{Chen_Kim_Song_2012}), we have $p_{\Omega}^{\alpha,\beta}(t,x_n,y_n) \to p_{\Omega}^{\alpha,\beta}(t,x,y)$ as $n \to \infty$. Therefore, by employing the Lebesgue dominated convergence theorem, we obtain
$$
\lim_{n \to \infty}G^{\Omega}(x_n,y_n) = \lim_{n \to \infty}\int_0^\infty p_{\Omega}^{\alpha, \beta}(t,x_n,y_n) ~dt = \int_0^\infty p_{\Omega}^{\alpha, \beta}(t,x,y) ~dt =  G^{\Omega}(x,y).
$$
This implies the joint continuity of $G^\Omega$, whence \eqref{G3} follows.
\vspace{0.1cm}\\
\textbf{An interpolation of the RFL and the SFL.} For $\sigma_1, \sigma_2 \in (0, 1]$, we consider the $\sigma_2$-spectral decomposition of the $\sigma_1$-RFL in the domain $\Omega$, namely $[(-\Delta)^{\sigma_1   }_{\text{RFL}}]^{\sigma_2}_{\text{SFL}}$. This type of interpolation
operator has been recently studied in \cite{Kim_Song_Vondra_2020} by a probabilistic approach.

Let $X$ be a rotationally invariant $\sigma_1$-stable process in $\mathbb{R}^N$ and $X^\Omega$ be the subprocess of $X$ that is killed upon existing $\Omega.$ Then, we subordinate $X^\Omega$ by an independent $\sigma_2$-stable subordinator $Y$ to obtain a process $Z^\Omega$, {\it i.e.} $(Z^\Omega)_t = (X^\Omega)_{Y_t}.$ Let $(S_t^\Omega)$ be the semigroup of $Z^\Omega$ whose infinitesimal generator can   be written as
\[
\mathcal{L}_{\sigma_, \sigma_2} := [(-\Delta)^{\sigma_1   }_{\text{RFL}}]^{\sigma_2}_{\text{SFL}}.
\]
In particular, from \cite[(2.20)  and  (3.4)]{Kim_Song_Vondra_2020}, we have that, for $u \in C_c^\infty(\Omega)$,
\[
\mathcal{L}_{\sigma_, \sigma_2} u(x) = \int_{\Omega} (u(x)-u(y)) J_{\sigma_1,\sigma_2}(x,y) ~dy + I_{\sigma_1, \sigma_2}(x) u(x), \quad x \in \Omega
\]
where
\begin{equation}
    \begin{split}
        &J_{\sigma_1, \sigma_2}(x,y):= \int_0^\infty  p_\Omega^{\sigma_, \sigma_2}(t,x,y) ~\nu(dt) , \quad x,y \in \Omega,\\
        & I_{\sigma_1, \sigma_2}(x):= \int_0^\infty \left(1- \int_{\Omega} p_\Omega^{\sigma_1, \sigma_2}(t,x,y) ~dy \right) ~\nu(dt)  \quad x \in \Omega.
    \end{split}
\end{equation}
In the above definition,
\[
\nu(dt) = \frac{\sigma_2}{\Gamma(1-\sigma_2)} t^{-\sigma_2-1} dt
\]
is a L\'evy measure of the $\sigma_2$-stable subordinator $Y$ and $p_\Omega^{\sigma_1, \sigma_2}$ denotes the heat kernel of the $\sigma_1$-RFL in the domain $\Omega$.

Assumption \eqref{L4} follows by repeating the arguments as in the case of RFL with $J(x,y) = J_{\sigma_1, \sigma_2}(x,y)$ and $B(x) = I_{\sigma_1, \sigma_2}(x)$ in \eqref{bilinear-form}. Finally, by employing an analogous argument as in the case of sum of two RFLs, together with the continuity of the heat kernel $p_{\Omega}^{\sigma_1, \sigma_2}$ \cite[Page 17]{Kim_Song_Vondrac_2020} and exit time estimates in \cite[Page 26, (6.2) and (6.3)]{Kim_Song_Vondra_2020}, we obtain the continuity of Green function in \eqref{G3}.
\vspace{0.1cm}\\
\textbf{Restricted relativistic Schr\"odinger operators.} We consider a class of relativistic Schr\"odinger operators with mass $m > 0$ of the form
\[
L^s_m := (-\Delta + m^2 {\mathds I})^s - m^{2s} {\mathds I},
\]
restricted to the class of functions that are zero outside $\Omega$, where $s \in (0,1)$ and ${\mathds I}$ denotes the identity operator.
Alternatively, by \cite[(1.3) and (6.7)]{Fall_Felli_2015}, the operator $L_m^s$ can be expressed as
\[
L_m^s u(x) = c_{N,s} m^{\frac{N+2s}{2}} P.V. \int_{\mathbb{R}^N} \frac{u(x) - u(y)}{|x-y|^\frac{N+2s}{2}} K_{\frac{N+2s}{2}}(m|x-y|) ~dy, \quad x \in \Omega,
\]
where, for $\nu >0$,
\[
K_{\nu}(r) \asymp \frac{\Gamma(\nu)}{2} \left(\frac{r}{2}\right)^{-\nu}, \quad r>0.
\]
and $\Gamma$ is the usual Gamma function. Due to the significant appearance of this operator in many research areas of mathematics and physics, it has attracted  attention of numerous mathematicians; see, e.g.,   \cite{Ryznar_2003,Fall_Felli_2015,Amb_2016}.  Assumption \eqref{L4} is satisfied  by using a similar argument as in the case of RFL with
\[
J(x,y) =  c_{N,s} \frac{K_{\frac{N+2s}{2}}(m|x-y|)}{|x-y|^{\frac{N+2s}{2}}} m^{\frac{N+2s}{2}}, \quad x, y \in \Omega , \ x \neq y\]
and
\[
B(x) = c_{N,s} m^{\frac{N+2s}{2}} \int_{\mathbb{R}^N \setminus \Omega} \frac{K_{\frac{N+2s}{2}}(m|x-y|)}{|x-y|^{\frac{N+2s}{2}}}, \ x \in \Omega,
\]
in the bilinear form \eqref{bilinear-form}. Finally, the continuity of Green function in \eqref{G3} follows from \cite[Theorem 4.6]{Kim_Lee_2007}.

\section{Description of main problems and results}\label{Main-prob-results}
In this section, we study the following nonlocal problem involving the operator $\mathbb{L}$ and  purely singular nonlinearities of the form
\begin{equation}  \label{sing:pure} \tag{$P_s$}
 \left\{
\begin{aligned}
 \mathbb{L} u & = \frac{1}{u^q} \quad && \text{ in } \, \Omega, \\
 u & > 0 \quad && \text{ in } \, \Omega, \\
 u &= 0 \quad && \text{ on } \partial \Omega \text{ or in } \Omega^c \text{ if applicable},
\end{aligned}
\right.
\end{equation}
where $\mathbb{L}$ satisfies the set of assumptions \eqref{L1}-\eqref{L4} and \eqref{G1}-\eqref{G3}, and $q>0$. The notion of weak-dual solutions is given below.
\begin{definition}\label{def:weak-dual}
	A positive function $u$  is said to be a weak-dual solution of the problem \eqref{sing:pure} if
	\begin{equation}\label{eq:notion0}
	u \in L^{1}_0(\Omega, \delta^\gamma),
	\quad u^{-q} \in L^{1}(\Omega, \delta^\gamma)
% 	u \gtrsim \delta^\theta \quad \text{ for some } 0< \theta < \frac{\gamma+1}{q},
	\end{equation}
	and
	\begin{equation} \label{eq:notionsol}
	 \int_{\Omega} u \xi ~dx = \int_{\Omega} \frac{1}{u^q} \mathbb{G}^\Omega[\xi] ~dx \quad \forall \ \xi \in \delta^\gamma L^\infty(\Omega).
	\end{equation}
\end{definition}
\begin{remark}
	The integrals in \eqref{eq:notionsol} are well defined. Indeed, the term on the left-hand side of \eqref{eq:notionsol} is well defined since $u \in L^1(\Omega,\delta^\gamma)$ and $\xi \in \delta^\gamma L^\infty(\Omega)$. For the term on the right-hand side, we deduce from \cite[Theorem 3.4]{AbaGomVaz_2019} that
	$|\mathbb{G}^\Omega [\xi]| \leq C\delta^\gamma$ in $\Omega$.
	 This, together with the condition \eqref{eq:notion0}, implies
	\begin{equation*}
	\int_{\Omega} \frac{1}{u^q} \mathbb{G}^\Omega[\xi] ~dx \leq C \int_{\Omega} \frac{1}{u^q} \delta^{\gamma} ~dx < +\infty.
	\end{equation*}
\end{remark}

The value of $q$ plays a crucial role in the analysis of problem \eqref{sing:pure}. An interesting feature of the study of \eqref{sing:pure} lies at the involvement of two critical exponents $q^{\ast}_{s, \gamma}$ and $q^{\ast \ast}_{s, \gamma}$ given by
\[ q^{\ast}_{s,\gamma}:= \frac{2s}{\gamma} -1, \qquad q^{\ast  \ast}_{s,\gamma}:=
\begin{aligned}
	\left\{
	\begin{array}{ll}
		+ \infty &\text{ if } \ \gamma \geq 2s-1,\\
		\displaystyle{\frac{\gamma+1}{2s-\gamma-1}}  &\text{ if } \ \gamma < 2s-1.
	\end{array}
	\right.
\end{aligned}
\]
These exponents classify the weak and strong singular nonlinearities in the class of nonlocal problem \eqref{sing:pure}. This classification is reflected via the behavior of the weak-dual solution near the boundary.

Denote
$$\mathbb{E}:= \left\{(q,\gamma) \in \mathbb{R}^+ \times (0, 2s) : 0 < q < q^{\ast}_{s,\gamma}, \gamma \geq s- \frac{1}{2} \ \text{or} \ q \in ( q^{\ast}_{s,\gamma}, q^{\ast \ast}_{s, \gamma} ), \gamma > s- \frac{1}{2} \right \}.$$
\textit{Throughout the remaining text, we will assume that $(q, \gamma) \in \mathbb{E}$}.

Our first main result is the existence and uniqueness of the weak-dual solution to the purely singular problem \eqref{sing:pure}.
\begin{theorem}\label{thm:pure:sing}
Assume \eqref{L1}-\eqref{L4} and \eqref{G1}-\eqref{G3} hold and $(q, \gamma) \in \mathbb{E}$. Then there exists a unique weak-dual solution $u_*$ of the problem \eqref{sing:pure}.
\end{theorem}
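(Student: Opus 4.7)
The plan is to establish existence via a truncation/approximation scheme combined with Schauder's fixed point theorem, and then obtain uniqueness from the Kato-type inequality of Section \ref{prelim-kato-ineq}. First, for each $n \in \mathbb{N}$ I would introduce the regularized problem
\begin{equation*}
\mathbb{L} u_n = (u_n + 1/n)^{-q} \quad \text{in } \Omega,
\end{equation*}
with the same boundary/exterior condition as \eqref{sing:pure}. The right-hand side is now bounded, so I can produce a nonnegative $u_n$ as a fixed point of the map $T_n : v \mapsto \mathbb{G}^\Omega[(v \vee 0 + 1/n)^{-q}]$ on a suitable closed convex subset of $\delta^\gamma L^\infty(\Omega)$. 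The two-sided Green estimate \eqref{G-est} combined with the continuity property \eqref{G3} ensure that $T_n$ is continuous and compact on this set, so Schauder applies and yields $u_n$.

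Next I would produce uniform-in-$n$ a priori bounds on $u_n$ by a bootstrap using the sharp estimates on $\mathbb{G}^\Omega[\delta^{-\beta}]$ (and its logarithmically perturbed variants) developed just before the theorem. A first elementary bound $u_n \geq c_0 \delta^\gamma$ with $c_0$ independent of $n$ follows by comparing the right-hand side to a positive constant on a fixed compact subset of $\Omega$ and invoking \eqref{G-est}. Feeding this back into the equation yields $\mathbb{L} u_n \lesssim \delta^{-q\gamma}$, and applying $\mathbb{G}^\Omega$ with the sharp action estimates produces
\begin{equation*}
u_n(x) \leq C_1 \begin{cases} \delta(x)^\gamma & \text{if } 0 < q < q^*_{s,\gamma},\\ \delta(x)^{2s/(q+1)} & \text{if } q^*_{s,\gamma} < q < q^{**}_{s,\gamma},\end{cases}
\end{equation*}
with $C_1$ independent of $n$. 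The membership $(q,\gamma)\in \mathbb{E}$ is exactly what makes $u_n^{-q}\delta^\gamma$ uniformly integrable. Monotone convergence (since $u_n$ is monotone in $n$ by comparison, using \eqref{L4}) together with Lebesgue's theorem then allow me to pass to the limit in the dual identity for the regularized problem and obtain a weak-dual solution $u_*$ in the sense of Definition \ref{def:weak-dual}.

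For uniqueness, if $u_1, u_2$ are two weak-dual solutions, subtracting their dual formulations and invoking the Kato-type inequality of Section \ref{prelim-kato-ineq} together with the strict monotonicity of $t \mapsto -t^{-q}$ yields
\begin{equation*}
\int_\Omega (u_1 - u_2)^+ \xi \, dx \leq 0 \qquad \text{for every } \xi \in \delta^\gamma L^\infty(\Omega),\ \xi \geq 0,
\end{equation*}
which forces $u_1 \leq u_2$ a.e.\ in $\Omega$. Interchanging the roles of $u_1$ and $u_2$ gives $u_1 = u_2$.

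The main obstacle I anticipate is the sharp bootstrap in the second paragraph: the upper bounds on $u_n$ must be obtained with constants independent of $n$ and with the correct exponent in each regime of $q$, and the delicate interplay between $q$, $\gamma$, and $s$ near the critical value $q^*_{s,\gamma}$ can produce logarithmic corrections whenever intermediate iterates of the bootstrap land at a critical scale. This is precisely what the preliminary analysis of $\mathbb{G}^\Omega[\delta^{-\beta} \log^\kappa(d_\Omega/\delta)]$ is designed to handle, and the restriction $(q,\gamma) \in \mathbb{E}$ both excludes $q = q^*_{s,\gamma}$ and guarantees that the Schauder fixed-point set is nonempty with the correct integrability.
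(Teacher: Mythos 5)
Your overall strategy (regularization with parameter $1/n$, Schauder fixed point for each regularized problem, monotonicity in $n$, passage to the limit, Kato-type inequality for uniqueness) mirrors the paper's approach, and the weakly singular case $q<q^*_{s,\gamma}$ is handled correctly. However, there is a genuine gap in the a priori upper bound in the strongly singular regime $q^*_{s,\gamma}<q<q^{**}_{s,\gamma}$. Starting from the lower bound $u_n\gtrsim\delta^\gamma$, feeding it into the equation gives $(u_n+1/n)^{-q}\lesssim\delta^{-q\gamma}$ and hence $u_n\lesssim\mathbb{G}^\Omega[\delta^{-q\gamma}]$; but the sharp action estimate (Lemma~\ref{est:green}) then yields $u_n\lesssim\delta^{\,2s-q\gamma}$, not $\delta^{\,2s/(q+1)}$. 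Since $\gamma>2s/(q+1)$ in this regime, the exponent $2s-q\gamma$ is strictly smaller than $2s/(q+1)$, so a single application of the Green estimate gives a strictly weaker upper bound than the one you claim. Worse, when $q\gamma\geq\gamma+1$ — which does occur for $(q,\gamma)\in\mathbb{E}$ with $\gamma\in(s-\tfrac12,2s-1)$ and $q$ close to $q^{**}_{s,\gamma}$ — the quantity $\mathbb{G}^\Omega[\delta^{-q\gamma}]$ is not even finite, so the first bootstrap step is vacuous.

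The missing idea is an improved, $n$-uniform \emph{lower} bound $u_n\gtrsim(\delta+n^{-1/\alpha})^\alpha - c/n$ with $\alpha=2s/(q+1)$, which the paper obtains by constructing a subsolution of the regularized problem from the perturbed weight $(\delta+\epsilon^{1/\alpha})^{-\beta}$ via Lemma~\ref{lem:greenest}; only then does inserting the improved lower bound into $u_\epsilon=\mathbb{G}^\Omega[(u_\epsilon+\epsilon)^{-q}]$ produce the sharp upper bound $u_\epsilon\lesssim\delta^\alpha$ through $\mathbb{G}^\Omega[\delta^{-\beta}]$ with $\beta=2sq/(q+1)<\gamma+1$ (which is exactly the constraint $q<q^{**}_{s,\gamma}$). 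An alternating bootstrap could in principle recover this, but to upgrade the lower bound from an upper bound you would need to control $(u_n+1/n)^{-q}$ from below, and the additive $1/n$ ruins the naive estimate near the boundary where $\delta$ is smaller than $n^{-1/\alpha}$. This is precisely the delicate $\epsilon$-dependent analysis that Lemma~\ref{lem:greenest} is designed to handle, and your proposal does not address it.
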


The boundary behavior of the weak-dual solution to problem \eqref{sing:pure} is depicted in terms of the distance function in the following theorem.
\begin{theorem}\label{thm:pure:sing:reg}
Let $u_*$ be the weak-dual solution of the problem \eqref{sing:pure} obtained in Theorem \ref{thm:pure:sing}.

{\sc I. Weakly singular nonlinearity.} If $q \in (0, q^\ast_{s, \gamma})$ then
    \[ u_* \in \mathcal{A}^{-}(\Omega):= \left\{u: u \asymp \delta^\gamma \right\}.\]

{\sc II. Critical singular nonlinearity.} If $q = q^\ast_{s, \gamma}$ then
    \[ u_* \in 	\begin{aligned}
	\mathcal{A}^{0}(\Omega):= \left\{
	\begin{array}{ll}
	\{u: u \asymp \delta^\gamma \ln(\frac{d_\Omega}{\delta})^{\frac{\gamma}{2s}}\} & \text{ if } \ s \leq \gamma , \vspace{0.1cm}\\
	\{u: \delta^\gamma \lesssim u \lesssim \delta^\gamma \ln(\frac{d_{\Omega}}{\delta}) \} & \text{ if } \ s-\frac{1}{2}  <\gamma <s.
	\end{array}
	\right.
	\end{aligned}
	\]

{\sc III. Strongly singular nonlinearity.} If $q \in (q^\ast_{s, \gamma}, q^{\ast \ast}_{s, \gamma})$ then
	\[
	 u_* \in  \mathcal{A}^{+}(\Omega) :=\left\{u: u \asymp \delta^{\frac{2s}{q+1}} \right\}.
	\]
\end{theorem}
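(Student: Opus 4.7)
The plan is to prove the three profiles by a sandwich argument: construct explicit sub- and super-solutions tailored to each regime of $q$, and invoke the Kato-type comparison from Section~\ref{prelim-kato-ineq} (together with the integral reformulation $u_* = \mathbb{G}^\Omega[u_*^{-q}]$) to pin $u_*$ between them. The structural fact enabling comparison is that the map $u\mapsto\mathbb{G}^\Omega[u^{-q}]$ is monotone decreasing in $u$, so $F(u):=u-\mathbb{G}^\Omega[u^{-q}]$ is monotone increasing. The key technical ingredient, developed at the start of Section~\ref{PS-problem} per the outline, is a sharp family of estimates of the schematic form
\begin{equation*}
\mathbb{G}^\Omega\bigl[\delta^{-a}\ln^{b}(d_\Omega/\delta)\bigr](x)\;\asymp\;\delta(x)^{\min(\gamma,\,2s-a)}\,\ell_{a,b}(\delta(x)),
\end{equation*}
in which $\ell_{a,b}\equiv 1$ away from the critical ray $a=2s-\gamma$ and equals an explicit power of $\ln(d_\Omega/\delta)$ on it.

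For Part~I ($q<q^\ast_{s,\gamma}$, equivalently $q\gamma<2s-\gamma$), I would set $\overline u:=C\delta^\gamma$; the Green estimate in its first (non-critical) branch gives $\mathbb{G}^\Omega[\overline u^{-q}]=C^{-q}\mathbb{G}^\Omega[\delta^{-q\gamma}]\lesssim C^{-q}\delta^\gamma$, so for $C$ large $\overline u\ge\mathbb{G}^\Omega[\overline u^{-q}]$ (an integral supersolution); symmetrically $\underline u:=c\delta^\gamma$ with $c$ small is an integral subsolution, and the Kato inequality squeezes $\underline u\le u_*\le\overline u$. Part~III ($q\in(q^\ast_{s,\gamma},q^{\ast\ast}_{s,\gamma})$) is analogous with $\alpha:=2s/(q+1)$ in place of $\gamma$: from $q>q^\ast_{s,\gamma}$ one has $q\alpha=2s-\alpha>2s-\gamma$, triggering the third branch of the Green estimate, $\mathbb{G}^\Omega[\delta^{-q\alpha}]\asymp\delta^{2s-q\alpha}=\delta^\alpha$, while the condition $q<q^{\ast\ast}_{s,\gamma}$ reads exactly $q\alpha<\gamma+1$, which secures $\delta^{-q\alpha}\in L^1(\Omega,\delta^\gamma)$ and hence admissibility in the weak-dual sense. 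The barriers $c\delta^\alpha$ and $C\delta^\alpha$ then yield $u_*\asymp\delta^{2s/(q+1)}$.

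Part~II ($q=q^\ast_{s,\gamma}$) is the delicate case. The scale $\delta^\gamma$ sits exactly on the critical ray, so the ansatz $u\asymp\delta^\gamma\ln^\beta(d_\Omega/\delta)$ produces, via the Green estimate, the fixed-point relation $\beta(q+1)=c_0$, where $c_0$ is the log-increment of $\mathbb{G}^\Omega$ on the critical ray; this forces $\beta=\gamma/(2s)$ in the range where $c_0=1$. In the subregime $\gamma\ge s$ both log-corrected barriers $c\delta^\gamma\ln^{\gamma/(2s)}(d_\Omega/\delta)$ and $C\delta^\gamma\ln^{\gamma/(2s)}(d_\Omega/\delta)$ satisfy the integral sub/super-inequalities and are weighted-$L^1$ admissible, giving the sharp $u_*\asymp\delta^\gamma\ln^{\gamma/(2s)}(d_\Omega/\delta)$. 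In the subregime $s-\tfrac12<\gamma<s$ the sharp matching of the log exponent on both sides is not available (it traces back to the restricted parameter range on which $c_0=1$ is cleanly proved), so I would retreat to the robust pair $\underline u=c\delta^\gamma$ (automatically an integral subsolution, since $\mathbb{G}^\Omega$ inflates it by at least one logarithm in the critical branch) and $\overline u=C\delta^\gamma\ln(d_\Omega/\delta)$ (a safe supersolution overshooting the sharp profile), which yields only the stated $\delta^\gamma\lesssim u_*\lesssim\delta^\gamma\ln(d_\Omega/\delta)$.

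The principal obstacle is Part~II: identifying the precise log-increment $c_0$ of $\mathbb{G}^\Omega$ on the critical ray and checking weighted-$L^1$ admissibility of the log-corrected barriers, both of which feed into why the sharp $\asymp$ must degrade to a pair of one-sided inequalities in the lower-$\gamma$ subregime. A secondary technical point is that the candidate barriers are unbounded, so the Kato comparison must be applied along the monotone approximating sequence from the proof of Theorem~\ref{thm:pure:sing} rather than directly, and the resulting estimates passed to the limit by dominated convergence against the weight $\delta^\gamma$.
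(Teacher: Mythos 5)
Your overall plan — sandwich $u_*$ between explicit barriers using the Green-kernel estimates together with the Kato-type inequality — is aligned in spirit with the paper, and Parts~I and~III are conceptually the same (though the paper actually derives those boundary bounds during the \emph{existence} proof, via the $\epsilon$-uniform estimates~\eqref{est:appr:solu:small:both}, \eqref{est:appr:solu:large:lower}--\eqref{est:appr:solu:large:upper} for the approximating solutions $u_\epsilon$, and simply cites them in Theorem~\ref{thm:pure:sing:boundbeha} rather than running a fresh comparison). Part~II is where your proposal diverges substantively. First, a genuine gap: your candidate barriers $c\,\delta^\gamma\ln^{\gamma/2s}(d_\Omega/\delta)$ are not of the form $\mathbb{G}^\Omega[f]$, so the Kato inequality of Proposition~\ref{Kato} does not apply to the differences $u_*-c\,\delta^\gamma\ln^{\gamma/2s}$. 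The paper avoids this by working with $\psi_\sigma:=\mathbb{G}^\Omega\!\left[\delta^{-(2s-\gamma)}\ln^{-\sigma}(d_\Omega/\delta)\right]$, which \emph{is} a Green potential and is shown in Lemmas~\ref{lem:greest1}--\ref{lem:greest2} to satisfy $\psi_\sigma\asymp\delta^\gamma\ln^{1-\sigma}(d_\Omega/\delta)$; the comparison is then between $u_*/K$ and $\psi_\sigma$, both Green potentials of explicit, $\delta^\gamma$-weighted-$L^1$ densities. This replacement is essential, not cosmetic, and your proposal must incorporate it.

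Second, your ``single-shot'' fixed-point heuristic (match the log-exponent directly and deduce $\beta=\gamma/2s$) is a genuinely different route from the paper's argument in the range $s<\gamma<2s$: there the paper \emph{iterates}, starting from the crude bound $u_*\lesssim\delta^\gamma\ln(d_\Omega/\delta)$, applying Kato to $C^{-q}\psi_q$ to get a log-exponent $1-q$, then feeding back with $\sigma_1=q(1-q)$, and so on; the geometric series $\sum_{j\geq 0}(-q)^j=1/(1+q)=\gamma/2s$ (valid because $q=q^*_{s,\gamma}<1$ when $\gamma>s$) gives the sharp exponent in the limit. Your one-step argument, properly phrased in terms of $\psi_\sigma$ with $\sigma=q/(q+1)$, is in fact exactly what the paper does in the borderline sub-case $\gamma=s$ (where $q=1$ and the series diverges), comparing $u_*/M$ with $\psi_{1/2}$. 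If executed carefully this one-step scheme is cleaner and avoids the $q<1$ restriction entirely; it is arguably stronger than the paper's iteration. Third, your diagnosis of why the estimate degrades for $s-\frac12<\gamma<s$ is inaccurate: Lemmas~\ref{lem:greest1}--\ref{lem:greest2} hold throughout $s-\frac12<\gamma<2s$, so there is no ``restricted range where $c_0=1$.'' The paper's obstruction is that the first iteration step takes $\sigma=q$, which must lie in $[0,1)$, and at $q=q^*_{s,\gamma}$ with $\gamma<s$ one has $q>1$. Finally, the remark that Kato must be applied along the approximating sequence is unnecessary: the paper applies Kato directly to $u_*=\mathbb{G}^\Omega[u_*^{-q}]$ and the $\psi_\sigma$'s, since both are already Green potentials of integrable densities.
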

As we can see from the above theorem,  when $q=q_{s,\gamma}^*$ and $s-\frac{1}{2}<\gamma<s$, we have $\delta^\gamma \lesssim u \lesssim \delta^\gamma \ln(\frac{d_{\Omega}}{\delta}) \}$. It would be interesting to see if these estimates are sharp or can be improved.

Put
\begin{equation}\label{puresing:bdrybeh}
	\begin{aligned}
		\mathcal{A}_{\ast}(\Omega) := \left\{
		\begin{array}{ll}
			\mathcal{A}^{-}(\Omega) & \text{ if } q  \in \left(0,q^{*}_{s,\gamma}\right),\\
			\mathcal{A}^{0}(\Omega) & \text{ if } q = q^{*}_{s,\gamma},\\
			\mathcal{A}^{+}(\Omega) & \text{ if } q \in (q^{*}_{s,\gamma}, q^{\ast \ast}_{s, \gamma}).
		\end{array}
		\right.
	\end{aligned}
\end{equation}

Denote
\[
\mathfrak{S}_1:= \|u_*\|_{L^\infty(\Omega)} \quad \text{and} \quad \mathfrak{S}_2:= \|\mathbb{G}^\Omega[1]\|_{L^\infty(\Omega)}
\]
where $u_*$ is the unique weak-dual solution of \eqref{sing:pure}.

Next, we consider the following nonlocal semilinear problem involving  a singular nonlinearity and a source term
\begin{equation}\label{sing:f(v)} \tag{$S_{\lambda,q}$}
\left\{
\begin{aligned}
\mathbb{L} v
&=\frac{1}{v^q} + \lambda f(v) \quad &&\text{in } \Omega, \\
v &> 0 &&\text{in } \Omega, \\
v & = 0 &&\text{on } \partial\Omega \text{ or in } \Omega \text{ if applicable},
\end{aligned}
\right.
\end{equation}
where the pair $(\lambda, f)$ satisfies the following assumptions:
\begin{enumerate}[resume,label=(f\arabic{enumi}),ref=f\arabic{enumi}]
	\item \label{f1} $f : (0, \infty) \to \mathbb{R}^+$  is a bounded domain map, {\it i.e.} for any bounded set $A \subset (0, \infty)$, $f(A)$ is bounded;
\end{enumerate}

\begin{enumerate}[resume,label=(f\arabic{enumi}),ref=f\arabic{enumi}]
	\item \label{f2} There exist constants $\Lambda$ and $C_\Lambda >0$ such that
	\[
	\lambda f(t) \leq \Lambda \ \text{for all } \ t \in [0,c_1+ \Lambda c_2] \ \text{and} \ \lambda \in (0, C_\Lambda)\]
	for some $c_1 \geq \mathfrak{S}_1$ and $ c_2 \geq \mathfrak{S}_2$;
\end{enumerate}

\begin{enumerate}[resume,label=(f\arabic{enumi}),ref=f\arabic{enumi}]
	\item \label{f3} the map $t \to \kappa t + \lambda f(t)$ is increasing in $(0, \mathfrak{S}_1 + \Lambda \mathfrak{S}_2]$ for some $\kappa >0.$
\setcounter{enumi}{0}
\end{enumerate}
\medskip

We present below some examples of the source term satisfying conditions \eqref{f1}--\eqref{f3}.

\noindent \textbf{Examples.}
	\begin{enumerate}
		\item[(i)] Let $f(t)= t^r$ for $t > 0$ and $0<r<1$. Then, for any $\lambda >0$, the pair $(\lambda, f)$ satisfies the assumptions \eqref{f1}--\eqref{f3}. Indeed, for $\Lambda>0$, put
		\begin{equation*}
			C_\Lambda := \frac{\Lambda}{(c_1 + c_2 \Lambda)^r}.
		\end{equation*}
Note that $C_{\Lambda} \to +\infty$ as $\Lambda \to +\infty$. One can choose $\Lambda >0$ large enough such that $\lambda f(c_1 + c_2 \Lambda) \leq \Lambda$ for $\lambda \in (0, C_\Lambda).$
		\item[(ii)] Let $f(t)= t^p$ for $t > 0$ and $p \geq 1.$ Then there exists  $\lambda^* >0$ such that for all $\lambda \in (0,\lambda^*)$ the pair $(\lambda, f)$ satisfies the assumption \eqref{f1}--\eqref{f3}. Precisely, we choose $\lambda^*= \frac{1}{c_2}$ if $p=1$ and for $\lambda^*= \frac{(p-1)^{p-1}}{c_1^{p-1} c_2 p^p} $ if $p>1$.
	\end{enumerate}

Weak-dual sub- and supersolutions  of problem \eqref{sing:f(v)} are defined as follows.
\begin{definition}\label{def:sour}
A positive function $v$ is said to be a weak-dual subsolution (resp. weak-dual supersolution) of \eqref{sing:f(v)} if,
	\begin{equation*}\label{def:solf(u)1}
	v \in L_0^{1}(\Omega, \delta^\gamma), \quad  %\delta^\theta \lesssim u \ \text{for some} \ 0< \theta < \frac{\gamma+1}{q}, \
	\ v^{-q}, f(v) \in L^{1}(\Omega, \delta^\gamma) \\
	\end{equation*}
	 and
	 \begin{equation*}\label{def:solf(u)12}
	\int_{\Omega} v \xi ~dx \leq \ (\text{resp.}\ \geq ) \int_{\Omega} \frac{1}{v^q} \mathbb{G}^\Omega[\xi] ~dx + \lambda \int_{\Omega} f(v) \mathbb{G}^\Omega[\xi] ~dx, \quad \forall \ \xi \in \delta^\gamma L^\infty(\Omega), \ \xi \geq 0.
	\end{equation*}
	\noindent
	A function which is both weak-dual sub-  and supersolution of \eqref{sing:f(v)} is called a weak-dual solution to \eqref{sing:f(v)}.
\end{definition}
\noindent
\begin{theorem}\label{thm:source}
Assume \eqref{L1}--\eqref{L4}, \eqref{G1}--\eqref{G3} and \eqref{f1}--\eqref{f3} hold, and $(q, \gamma) \in \mathbb{E}$. Then there exists a weak-dual solution $v$ of \eqref{sing:f(v)}. Moreover, $v \in \mathcal{A}_{\ast}(\Omega).$
\end{theorem}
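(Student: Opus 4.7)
The plan is to apply the sub–super solution method, bootstrapping from the purely singular result of Theorem \ref{thm:pure:sing}. Let $u_*$ be the unique weak-dual solution of \eqref{sing:pure} and consider the candidate pair
\[
\underline{v}:=u_*,\qquad \bar v:=u_*+\Lambda\,\mathbb{G}^\Omega[1].
\]
That $\underline{v}$ is a weak-dual subsolution of \eqref{sing:f(v)} is immediate from its definition and $f\geq 0$. For the supersolution property, I would use linearity of $\mathbb{G}^\Omega$ and Fubini to write
\[
\int_\Omega \bar v\,\xi\,dx=\int_\Omega u_*^{-q}\,\mathbb{G}^\Omega[\xi]\,dx+\Lambda\int_\Omega\mathbb{G}^\Omega[\xi]\,dx
\]
for $\xi\in\delta^\gamma L^\infty(\Omega)$, $\xi\geq 0$. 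Since $\bar v\geq u_*$ gives $\bar v^{-q}\leq u_*^{-q}$ and since $\|\bar v\|_\infty\leq \mathfrak{S}_1+\Lambda\mathfrak{S}_2\leq c_1+\Lambda c_2$, assumption \eqref{f2} yields $\lambda f(\bar v)\leq\Lambda$ as soon as $\lambda\in(0,C_\Lambda)$, and the supersolution inequality follows.

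With the ordered pair $u_*\leq\bar v$ in hand, I would construct a solution in the order interval $[u_*,\bar v]$ by a linearized iteration exploiting \eqref{f3}. Set $v_0:=u_*$ and, using the shifted Green operator of $\mathbb{L}+\kappa\,\mathbb{I}$ (whose existence and comparison principle come from the Kato inequality proved in Section \ref{prelim-kato-ineq}), define $v_{n+1}$ as the weak-dual solution of
\[
(\mathbb{L}+\kappa\,\mathbb{I})v_{n+1}=v_n^{-q}+\kappa v_n+\lambda f(v_n).
\]
Induction combined with Kato's inequality shows $u_*\leq v_n\leq\bar v$ for every $n$: on the lower side one compares $v_{n+1}$ with $u_*$ using that $t\mapsto\kappa t+\lambda f(t)$ is non-decreasing by \eqref{f3}, while on the upper side one compares $v_{n+1}$ with $\bar v$ using the bound $\lambda f(\bar v)\leq\Lambda$ from \eqref{f2} and the fact that $v_n^{-q}\leq u_*^{-q}$. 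Since $u_*^{-q}\in L^1(\Omega,\delta^\gamma)$ by Theorem \ref{thm:pure:sing}, the right-hand sides are uniformly dominated in $L^1(\Omega,\delta^\gamma)$, so a diagonal/dominated-convergence argument along a subsequence produces a limit $v\in[u_*,\bar v]$ which, by continuity of $f$ and of $\mathbb{G}^\Omega$ on weighted $L^1$, is a weak-dual solution of \eqref{sing:f(v)}. As a fallback if the iteration fails to be monotone, the same interval can be shown to be invariant under $T(w):=\mathbb{G}^\Omega[w^{-q}+\lambda f(w)]$ (up to the projection $w\mapsto u_*\vee w$, which is harmless because the eventual fixed point will satisfy $v\geq u_*$), and Schauder's theorem applied in $L^1(\Omega,\delta^\gamma)$ delivers a fixed point, using the compactness properties of $\mathbb{G}^\Omega$ derived from \eqref{G-est} in the spirit of \cite{HuNg_source,HuNg_absorption}.

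The boundary behavior $v\in\mathcal{A}_*(\Omega)$ then falls out of the sandwich $u_*\leq v\leq \bar v$. By Theorem \ref{thm:pure:sing:reg}, $u_*$ lies in the appropriate class among $\mathcal{A}^-$, $\mathcal{A}^0$, $\mathcal{A}^+$. Using \eqref{G-est} with $\xi\equiv 1$, one has $\mathbb{G}^\Omega[1]\asymp\delta^\gamma$, so $\bar v$ shares the same asymptotic class as $u_*$: in the weakly singular regime both behave like $\delta^\gamma$; in the critical regime the $\delta^\gamma$ correction does not disturb the logarithmic factor; and in the strongly singular regime $q>q^*_{s,\gamma}$ forces $2s/(q+1)<\gamma$, so $u_*\asymp\delta^{2s/(q+1)}$ dominates $\delta^\gamma$ near $\partial\Omega$ and $\bar v\asymp u_*$.

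The main obstacle is the opposite monotonicity of the two pieces of the nonlinearity: $v\mapsto v^{-q}$ is decreasing while the source is controlled only through the combined quantity $\kappa t+\lambda f(t)$ provided by \eqref{f3}. Designing an iteration (or fixed-point set) in which these competing effects do not break the ordering $u_*\leq v\leq\bar v$, and passing to the limit while preserving the positivity $v\geq u_*>0$ that keeps $v^{-q}$ integrable against $\delta^\gamma$, is the delicate technical point, and this is precisely where the structural assumptions \eqref{f2}--\eqref{f3} and the sharp boundary estimates for $u_*$ from Theorem \ref{thm:pure:sing:reg} are used in an essential way.
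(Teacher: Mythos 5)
Your overall strategy coincides with the paper's: take $u_*$ from Theorem \ref{thm:pure:sing} as the subsolution, take $\bar v = u_*+\Lambda\,\mathbb{G}^\Omega[1]$ as the supersolution (exactly the paper's $v^0$), verify these using \eqref{f2}, produce a solution in the order interval $[u_*,\bar v]$ via an iteration controlled by \eqref{f3} and the Kato inequality, and read off the boundary class from $u_*\leq v\leq \bar v$ together with $\mathbb{G}^\Omega[1]\asymp \delta^\gamma$. The supersolution verification and the boundary-behavior discussion are correct.

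The gap is in the iteration you write down. You define $v_{n+1}$ by the \emph{fully linearized} scheme
\[
(\mathbb{L}+\kappa\,\mathbb{I})\,v_{n+1}=v_n^{-q}+\kappa v_n+\lambda f(v_n),
\]
i.e. the singular term is frozen at the previous iterate. This scheme does \emph{not} preserve the lower bound $v_{n+1}\geq u_*$ from $v_n\geq u_*$: writing $(\mathbb{L}+\kappa)(v_{n+1}-u_*)=\left(v_n^{-q}-u_*^{-q}\right)+\kappa(v_n-u_*)+\lambda f(v_n)$, the first bracket is $\leq 0$ while the remaining terms are $\geq 0$, so the sign is indeterminate and the Kato comparison gives nothing. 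The same sign conflict ruins monotonicity $v_{n+1}\geq v_n$: one gets $v_n^{-q}-v_{n-1}^{-q}\leq 0$ against $\kappa(v_n-v_{n-1})+\lambda(f(v_n)-f(v_{n-1}))\geq 0$ from \eqref{f3}. You flag this as the delicate point at the end, but the scheme you propose does not actually handle it. The paper avoids the problem precisely by \emph{not} freezing the singular term: its scheme $(Q_n)$ reads
\[
\mathbb{L}v_n+\kappa v_n = v_n^{-q}+\lambda f(v_{n-1})+\kappa v_{n-1},
\]
so the problem for $v_n$ is semilinear and the map $t\mapsto \mathbb{L}t+\kappa t - t^{-q}$ is order-preserving (the $-q$ power now appears with the correct sign in the comparison), while the right-hand side is non-decreasing in $v_{n-1}$ by \eqref{f3}. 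Solvability of each step $(Q_n)$ is itself nontrivial and is handled by the paper in Proposition \ref{pro:iter:exis:1} via an $\epsilon$-regularization and a Schauder fixed point in $L^\infty(\Omega)$; your Schauder-in-$L^1(\Omega,\delta^\gamma)$ fallback with the ad hoc projection $w\mapsto u_*\vee w$ is not a substitute, because a fixed point of the projected map need not be a solution of the original equation, and no compactness of $\mathbb{G}^\Omega$ on $L^1(\Omega,\delta^\gamma)$ for the singular right-hand side $w^{-q}$ is established. To close the argument you should replace your linearized scheme with one of the form $(Q_n)$ (singular term evaluated at the current iterate) and supply the existence of each $v_n$ along the lines of Proposition \ref{pro:iter:exis:1}.
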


Next we are concerned with the following nonlocal semilinear problem involving  a singular nonlinearity and an absorption term
\begin{equation}\label{sing+absorption}
\tag{$A_{g, q}$}
\left\{
\begin{aligned}
\mathbb{L} w + g(w)
&=\frac{1}{w^q}  \quad &&\text{in } \Omega, \\
w &> 0 &&\text{in } \Omega, \\
w & = 0 &&\text{on } \partial\Omega \text{ or in } \Omega \text{ if applicable},
\end{aligned}
\right.
\end{equation}
where the function $g$ satisfy the following conditions:
\begin{enumerate}[resume,label=(g\arabic{enumi}),ref=g\arabic{enumi}]
	\item \label{g1}  $g : (0, \infty) \to \mathbb{R}^+$  is a bounded domain map;
\end{enumerate}

\begin{enumerate}[resume,label=(g\arabic{enumi}),ref=g\arabic{enumi}]
	\item \label{g2} the map $t \to \mu t - g(t)$ is increasing in the interval $(0, \mathfrak{S}_1]$ for some $\mu >0$;
\end{enumerate}

\begin{enumerate}[resume,label=(g\arabic{enumi}),ref=g\arabic{enumi}]
	\item \label{g3}
     $\lim_{t \to 0^+} t^p g(t) < +\infty$ with $p<q$.
\end{enumerate}

\noindent \textbf{Examples.}
	\begin{enumerate}
		\item[(i)] Let $g(t)= t^p$ for $t > 0$ and $p \geq 1.$ Then, for any $\mu > p \mathfrak{S}_1^{p-1}$, the function $g$ satisfies the assumptions \eqref{g1}--\eqref{g3}.
		\item[(ii)] Let $g(t)= t^{-r}$ for $t > 0$ and $0< r < q$. Then $g$ satisfies the assumptions \eqref{g1}--\eqref{g3}.
	\end{enumerate}
The definition of weak-dual sub- and supersolutions is given below.
\begin{definition}\label{def:abs}
	A positive function $w$ is said to be a weak-dual subsolution (resp.  weak-dual supersolution) of \eqref{sing+absorption} if
	\begin{equation*}\label{def:solg(u)1}
	w \in L_0^{1}(\Omega, \delta^\gamma), \quad  %\delta^\theta \lesssim u \ \text{for some} \ 0< \theta < \frac{\gamma+1}{q},
	w^{-q}, g(w) \in L^{1}(\Omega, \delta^\gamma) \\
	\end{equation*}
	and
	\begin{equation*}\label{def:solg(u)2}
	\int_{\Omega} w \xi ~dx + \int_{\Omega} g(w) \mathbb{G}^\Omega[\xi] ~dx \leq \ (\text{resp.}\ \geq ) \int_{\Omega} \frac{1}{w^q} \mathbb{G}^\Omega[\xi] ~dx,  \quad \forall \ \xi \in \delta^\gamma L^\infty(\Omega), \ \xi \geq 0.
	\end{equation*}
	\noindent
	A function which is both weak-dual sub-  and super weak-dual solution of \eqref{sing+absorption} is called a weak-dual solution to \eqref{sing+absorption}.
\end{definition}
\begin{theorem}\label{thm:absor}
Assume \eqref{L1}--\eqref{L4}, \eqref{G1}--\eqref{G3}, \eqref{g1}--\eqref{g3} hold, and  $(q, \gamma) \in \mathbb{E}$. Then there exists a unique weak-dual solution $w$ of \eqref{sing+absorption}. Moreover, $w \in \mathcal{A}_{\ast}(\Omega)$.
\end{theorem}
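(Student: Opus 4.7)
The strategy I would follow adapts the sub-super-solution method combined with a truncation of the absorption term, paralleling the scheme outlined for Theorem \ref{thm:source}, and then extracts uniqueness from the Kato-type inequality of Section \ref{prelim-kato-ineq}. Let $u_* \in \mathcal{A}_*(\Omega)$ be the unique weak-dual solution of \eqref{sing:pure} furnished by Theorems \ref{thm:pure:sing}--\ref{thm:pure:sing:reg}. Since $g \geq 0$ by \eqref{g1}, $\overline{w}:=u_*$ is immediately a weak-dual super-solution. For the sub-solution, I would use \eqref{g3} to pick $p \in (0,q)$ and $C, t_0 > 0$ with $g(t) \leq Ct^{-p}$ for $t \in (0, t_0]$, and set $\underline{w} := \varepsilon u_*$ with $\varepsilon \in (0,1)$ small enough that $\varepsilon \mathfrak{S}_1 \leq t_0$: pointwise a.e.,
\[
\mathbb{L} \underline{w} + g(\underline{w}) \leq \varepsilon u_*^{-q} + C\varepsilon^{-p} u_*^{-p} = u_*^{-q}\bigl(\varepsilon + C\varepsilon^{-p} u_*^{q-p}\bigr) \leq \varepsilon^{-q} u_*^{-q} = \underline{w}^{-q},
\]
which closes since $q > p$ and $u_* \leq \mathfrak{S}_1$.

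To produce a solution inside $[\underline{w}, \overline{w}]$, I would truncate by $g_n := g \wedge n$: both barriers persist for the truncated equation $\mathbb{L} w_n + g_n(w_n) = w_n^{-q}$, using $g_n \leq g$ for the sub-solution check. With $\mu$ from \eqref{g2}, I would recast this as the fixed-point equation $w = (\mathbb{L}+\mu I)^{-1}[w^{-q} + \mu w - g_n(w)]$ on the order interval; because $g_n$ is bounded and continuous and the singular source satisfies $w^{-q} \leq \underline{w}^{-q} \lesssim u_*^{-q} \in L^1(\Omega, \delta^\gamma)$ (Definition \ref{def:weak-dual}), the map is well-defined, continuous, and—by the regularizing properties of $\mathbb{G}^\Omega$—compact, so Schauder's theorem yields $w_n$ with $\underline{w} \leq w_n \leq \overline{w}$. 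Armed with this uniform pointwise domination and the bound $\mathbb{G}^\Omega[\xi] \lesssim \delta^\gamma$ from \cite[Theorem 3.4]{AbaGomVaz_2019}, I would then invoke the dominated convergence theorem to pass to the limit $n \to \infty$ in the weak-dual identity of Definition \ref{def:abs}, producing a weak-dual solution $w$ satisfying $\varepsilon u_* \leq w \leq u_*$. The pinching together with Theorem \ref{thm:pure:sing:reg} immediately gives $w \in \mathcal{A}_*(\Omega)$.

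For uniqueness, given two weak-dual solutions $w_1, w_2$ (both valued in $(0,\mathfrak{S}_1]$), I would subtract the weak-dual identities from Definition \ref{def:abs}, add $\mu \int_\Omega (w_1-w_2) \mathbb{G}^\Omega[\xi]$ to both sides—writing $h(t) := \mu t - g(t)$, which is non-decreasing on $(0, \mathfrak{S}_1]$ by \eqref{g2}—and apply the Kato-type inequality built on \eqref{L4} (Section \ref{prelim-kato-ineq}) to $(w_1-w_2)^+$. Two facts then produce the cancellation: (i) $w_1^{-q} - w_2^{-q} \leq 0$ on $\{w_1 > w_2\}$ by monotonicity of $t \mapsto t^{-q}$, and (ii) the monotonicity of $h$ lets the shifted positive operator $\mathbb{L} + \mu I$ absorb the absorption difference. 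Testing against a positive $\xi \in \delta^\gamma L^\infty(\Omega)$ forces $(w_1-w_2)^+ \equiv 0$ a.e., and the symmetric argument closes $w_1 = w_2$.

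The main obstacle I anticipate is in the construction step: the iteration map $w \mapsto w^{-q} + \mu w - g(w)$ is not monotone—the singular part decreases in $w$ while $\mu w - g(w)$ is non-decreasing by \eqref{g2}—so a naive monotone-iteration scheme does not converge, and one must rely on Schauder's theorem inside the order interval. This in turn demands sharp compactness estimates on $\mathbb{G}^\Omega$ acting on the near-boundary singular source $w^{-q}$ and a uniform-in-$n$ $L^1(\Omega, \delta^\gamma)$ integrability to legitimize the passage to the limit. A parallel delicate point surfaces in the uniqueness step, where the mixed signs on $\{w_1 > w_2\}$ can only be realigned by the shift $\mathbb{L}+\mu I$ together with the one-sided Lipschitz structure carved out by \eqref{g2}.
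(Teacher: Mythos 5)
Your sub- and super-solution barriers are essentially the paper's, and the verification of the sub-solution via the pointwise bound $g(t)\lesssim t^{-p}$ from \eqref{g3} is a legitimate variant of the paper's $L^\infty$-bound argument. Beyond that point, however, your construction diverges substantially from the paper's, and two key steps have gaps.

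\emph{Construction and compactness.} The paper does not truncate $g$ and solve a fixed-point equation involving $w^{-q}$ directly. Instead it freezes $g$ at the previous iterate, producing the parameterized scheme \eqref{eq:iter-abs} with right-hand side $w_n^{-q}+\mu w_{n-1}-g(w_{n-1})$, regularizes the singular term to $(w^{+}+\epsilon)^{-q}$, proves existence for each $\epsilon>0$ via Schauder in $L^\infty(\Omega)$ (Proposition \ref{pro:iter:exis:2}), and only then lets $\epsilon\to 0$ and $n\to\infty$. Your claim that the map is compact ``by the regularizing properties of $\mathbb{G}^\Omega$'' glosses over exactly the point the $\epsilon$-regularization handles: on the order interval $[\underline{w},\overline{w}]$ the source $w^{-q}$ is dominated only by $\underline{w}^{-q}\asymp\delta^{-q\gamma}$ (or $\delta^{-2sq/(q+1)}$), which is unbounded near $\partial\Omega$, so the Arzel\`a--Ascoli equicontinuity estimate used in Proposition \ref{pro:app}/\ref{pro:iter:exis:2} (which requires a bounded source multiplied by $\int_\Omega |G^\Omega(x,z)-G^\Omega(y,z)|\,dz$) is not available. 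Relatedly, your remark that ``a naive monotone-iteration scheme does not converge'' is misleading: the paper does run a monotone iteration, and the point of freezing $g(w_{n-1})$ is precisely that the $n$-th problem then has a strictly decreasing dependence on $w_n$, so the Kato-type comparison in the proof of Theorem \ref{thm:absor} yields $\underline{w}_0\le w_n\le w_{n-1}\le w_0$, and monotone convergence does the rest.

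\emph{Uniqueness.} Your Kato argument does not close as written. After subtracting the two weak-dual identities and invoking Kato \eqref{kato+} for $(w_1-w_2)^+$, the integrand on $\{w_1>w_2\}$ is
$(w_1^{-q}-w_2^{-q})-(g(w_1)-g(w_2))=(w_1^{-q}-w_2^{-q})-\mu(w_1-w_2)+\bigl(h(w_1)-h(w_2)\bigr)$.
The first two terms are $\le 0$, but because $h=\mu t-g(t)$ is \emph{increasing} on $(0,\mathfrak{S}_1]$ by \eqref{g2} and $w_1>w_2$ there, the third term is $\ge 0$, so the sum has no definite sign and the RHS of Kato cannot be concluded to be nonpositive. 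Your step (ii) claims the monotonicity of $h$ ``absorbs'' the absorption difference, but monotonicity produces the \emph{wrong} sign for this purpose; the cancellation you expect does not occur for a $g$ that, like $g(t)=t^{-r}$ in the paper's examples, is permitted to be decreasing. (It is worth noting that the paper's written proof of Theorem \ref{thm:absor} establishes only existence and boundary behavior; uniqueness as stated is not argued there, and indeed it is not a consequence of the Kato comparison alone when $g$ is not monotone.)
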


We end this Section with an open question. We note that the case $q \geq q_{s,\gamma}^{\ast \ast}$ and $\gamma < 2s-1$ in not included in this paper due to the absence of boundary estimates of the solution to approximating problems. Therefore, it would be interesting to investigate this case.

\section{Preliminary results and Kato type inequalities}\label{prelim-kato-ineq}
\subsection{Preliminary results}
In this subsection, we recall some basic results comprised of lower Hopf type estimate, integration by parts formula and action of the Green kernel on power of the distance functions.
\begin{lemma}[Theorem 2.6, \cite{AbaGomVaz_2019}]\label{Hopf}
	Assume \eqref{G1} and \eqref{G2} hold. Then there exists $c>0$ such that for any $0 \leq f \in L^1(\Omega,\delta^\gamma)$,
	$$\mathbb{G}^\Omega[f](x) \geq c \delta(x)^{\gamma} \int_\Omega f(y) \delta(y)^\gamma ~dy, \ x \in \Omega.$$
\end{lemma}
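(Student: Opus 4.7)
The plan is to derive this Hopf-type lower bound directly from the pointwise two-sided Green function estimate in assumption \eqref{G2}, by first extracting a uniform pointwise inequality of the form $G^\Omega(x,y) \gtrsim \delta(x)^\gamma \delta(y)^\gamma$ on $\dom(G^\Omega)$, and then integrating against $f$. The two-sided bound \eqref{G-est} is the only ingredient needed; the lemma is purely kernel-theoretic.

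Fix $(x,y) \in \dom(G^\Omega)$. Since $\Omega$ is bounded with $d_\Omega = 2\diam(\Omega)$, we have $|x-y| \leq d_\Omega$ and $\delta(z) \leq d_\Omega$ for every $z \in \Omega$. Therefore
$$\min\!\left(\frac{\delta(z)}{|x-y|},\,1\right) \;\geq\; \min\!\left(\frac{\delta(z)}{d_\Omega},\,1\right) \;=\; \frac{\delta(z)}{d_\Omega}, \qquad z \in \Omega.$$
Combined with the elementary inequality $|x-y|^{2s-N} \geq d_\Omega^{2s-N}$ (which holds because $N > 2s$), the lower-bound side of \eqref{G-est} evaluated at $z=x$ and $z=y$ yields
$$G^\Omega(x,y) \;\geq\; \frac{c_1}{d_\Omega^{N-2s}} \cdot \frac{\delta(x)^\gamma}{d_\Omega^\gamma} \cdot \frac{\delta(y)^\gamma}{d_\Omega^\gamma} \;=\; \frac{c_1}{d_\Omega^{N-2s+2\gamma}}\,\delta(x)^\gamma\,\delta(y)^\gamma.$$

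Multiplying by $f(y) \geq 0$, integrating in $y$ over $\Omega$, and using the integral representation of $\mathbb{G}^\Omega$ furnished by \eqref{G2}, I obtain
$$\mathbb{G}^\Omega[f](x) \;=\; \int_\Omega G^\Omega(x,y) f(y)\,dy \;\geq\; \frac{c_1}{d_\Omega^{N-2s+2\gamma}}\,\delta(x)^\gamma \int_\Omega f(y)\,\delta(y)^\gamma\,dy,$$
which is the desired inequality with $c := c_1 / d_\Omega^{N-2s+2\gamma}$. The hypothesis $f \in L^1(\Omega,\delta^\gamma)$ guarantees that the right-hand integral is finite, while the nonnegativity $f \geq 0$ is precisely what allows the pointwise lower bound on $G^\Omega$ to survive under integration.

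There is no serious obstacle: the only candidate for a subtlety is the behavior near the diagonal $\{x = y\}$, but there $|x-y|^{2s-N}$ blows up while both minima equal $1$, so $G^\Omega(x,y) \geq c_1|x-y|^{2s-N} \geq c_1 d_\Omega^{2s-N}$ dominates the target $\delta(x)^\gamma \delta(y)^\gamma / d_\Omega^{2\gamma}$ trivially. Thus the lemma is essentially a one-line consequence of the global kernel bound \eqref{G-est}; all of the genuine analytic work is hidden in establishing such sharp two-sided estimates in the first place, which is an input of our framework rather than something to be proved here.
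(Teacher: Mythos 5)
Your proof is correct. The key observation — that the uniform pointwise bound $G^\Omega(x,y) \geq c_1 d_\Omega^{-(N-2s+2\gamma)}\delta(x)^\gamma\delta(y)^\gamma$ follows directly from the lower-bound half of \eqref{G-est} by replacing both $|x-y|$ in the minima by $d_\Omega$ (legitimate since $|x-y| \leq d_\Omega$ and $\delta \leq d_\Omega$, and the map $t \mapsto \min(t,1)$ is monotone) and then integrating against $f \geq 0$ — is exactly the standard argument. Note that the paper itself does not prove this lemma; it cites Theorem~2.6 of \cite{AbaGomVaz_2019}, so there is no in-paper proof to diverge from, and your self-contained derivation reproduces the natural argument. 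Your closing paragraph about the diagonal is harmless but slightly beside the point: the pointwise lower bound you established already holds uniformly on $\dom(G^\Omega)$, so no separate treatment near $\{x=y\}$ is needed at all.
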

\begin{lemma}[Lemma 4.5, \cite{ChaGomVaz_2020}]\label{IBP}
	Assume \eqref{G1} and\eqref{G2} hold, $f \in L^1(\Omega, \delta^\gamma)$ and $\xi \in \delta^\gamma L^\infty(\Omega)$. Then we have
	$$\int_{\Omega} \mathbb{G}^\Omega[f] \xi ~dx= \int_{\Omega} f \mathbb{G}^\Omega[\xi]  ~dx.$$
\end{lemma}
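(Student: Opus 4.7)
The plan is to prove the identity by unfolding both sides into double integrals involving the Green kernel and then swapping the order of integration using Fubini--Tonelli. The symmetry of $G^\Omega$, guaranteed by assumption \eqref{G2}, will turn the integrand of one side into the integrand of the other, so the only real content is verifying integrability with respect to the product measure.

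First, I would fix the pointwise representation. By \eqref{G1}--\eqref{G2} one has, for (at least) nonnegative measurable $f$,
\[
\mathbb{G}^\Omega[f](x)=\int_\Omega G^\Omega(x,y)f(y)\,dy,
\]
and the same formula for $\mathbb{G}^\Omega[\xi]$. Writing $f = f^+ - f^-$ and $\xi=\xi^+ - \xi^-$ and handling each of the four sign combinations separately, it suffices to treat the nonnegative case, where there is no sign issue and Tonelli applies unconditionally; then one combines the four resulting (finite) identities by linearity.

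Assume therefore $f\geq 0$ and $\xi\geq 0$. Then
\[
\int_\Omega \mathbb{G}^\Omega[f](x)\,\xi(x)\,dx
= \int_\Omega\!\int_\Omega G^\Omega(x,y) f(y)\,\xi(x)\,dy\,dx.
\]
The integrand is nonnegative and measurable (by \eqref{G2}, $G^\Omega$ is Borel measurable on $\dom(G^\Omega)$, the complement of a null set). By Tonelli's theorem I may swap the two integrals, and then invoke the symmetry $G^\Omega(x,y)=G^\Omega(y,x)$ from \eqref{G2} to rewrite the result as
\[
\int_\Omega f(y)\!\left(\int_\Omega G^\Omega(y,x)\,\xi(x)\,dx\right)dy
= \int_\Omega f(y)\,\mathbb{G}^\Omega[\xi](y)\,dy.
\]
This gives the identity, provided both sides are finite, which turns the question into a finiteness check.

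The finiteness check is the one nontrivial step. Since $\xi\in \delta^\gamma L^\infty(\Omega)$, I have $|\xi(x)|\le C\delta(x)^\gamma$ on $\Omega$. The remark following the definition of weak-dual solution recalls that $\bigl|\mathbb{G}^\Omega[\xi]\bigr|\le C'\delta^\gamma$ in $\Omega$ (from \cite[Theorem~3.4]{AbaGomVaz_2019}); applied with $|\xi|$ in place of $\xi$, this reads
\[
\int_\Omega G^\Omega(y,x)\,|\xi(x)|\,dx \le C''\,\delta(y)^\gamma, \qquad y\in\Omega.
\]
Multiplying by $|f(y)|$ and integrating,
\[
\int_\Omega\!\int_\Omega G^\Omega(x,y)\,|f(y)|\,|\xi(x)|\,dx\,dy
\;\le\; C''\!\int_\Omega |f(y)|\,\delta(y)^\gamma\,dy \;<\;+\infty,
\]
since $f\in L^1(\Omega,\delta^\gamma)$. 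This justifies Tonelli in the signed case as well (so Fubini applies to $f\xi$) and shows both sides of the identity are absolutely convergent.

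The main and essentially only obstacle is this integrability bound; everything else is formal manipulation with the symmetric kernel. The bound itself is not difficult once one has the two-sided Green estimate \eqref{G-est} at hand—indeed, one can alternatively prove $\mathbb{G}^\Omega[\delta^\gamma]\lesssim \delta^\gamma$ directly from \eqref{G-est} by splitting $\Omega$ into the regions $|x-y|\le \tfrac12\delta(x)$ and its complement and computing the resulting integrals in polar coordinates, but invoking the already-cited \cite[Theorem~3.4]{AbaGomVaz_2019} is cleaner.
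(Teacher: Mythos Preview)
Your proof is correct. The paper does not supply its own proof of this lemma; it merely cites \cite[Lemma~4.5]{ChaGomVaz_2020}. Your argument---unfolding both sides via the kernel representation, using the symmetry of $G^\Omega$ from \eqref{G2}, and justifying Fubini--Tonelli through the bound $\mathbb{G}^\Omega[|\xi|]\lesssim\delta^\gamma$ combined with $f\in L^1(\Omega,\delta^\gamma)$---is the standard and natural route, and is essentially what the cited reference does as well.
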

\begin{lemma}[Theorem 3.4, \cite{AbaGomVaz_2019}]\label{est:green}
	Assume Assume \eqref{G1} and \eqref{G2} hold and $\beta < \gamma +1$. Then $\delta^{-\beta} \in L^1(\Omega, \delta^\gamma)$ and
	\begin{equation}\label{bdry:beha}
	\begin{aligned}
	\mathbb{G}^\Omega[\delta^{-\beta}] \asymp  \left\{
	\begin{array}{ll}
	\delta^\gamma &  \text{ if } \gamma < 2s-\beta, \\
	\delta^\gamma \ln(\frac{d_{\Omega}}{\delta}) \,  & \text{ if } \gamma = 2s-\beta  \text{ and } 2 \gamma > 2s -1,\\
	\delta^{2s-\beta} & \text{ if } \gamma > 2s-\beta  \text{ and } 2 \gamma > 2s -1.\\
	\end{array}
	\right.
	\end{aligned}
	\end{equation}
	Here $d_{\Omega}=2\diam(\Omega)$.
\end{lemma}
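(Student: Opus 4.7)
The plan is to work directly from the two-sided Green kernel bound \eqref{G-est}. First I would dispose of the integrability claim: $\delta^{-\beta} \in L^1(\Omega,\delta^\gamma)$ is equivalent to $\int_\Omega \delta(y)^{\gamma-\beta}\,dy < \infty$, which holds precisely under the standing hypothesis $\beta < \gamma+1$ by the standard volume-distance estimate on a $C^2$ domain.

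For the main estimates, fix $x\in\Omega$, set $r := \delta(x)$, and decompose $\Omega$ as $A_{\mathrm{in}}(x) \cup A_{\mathrm{out}}(x)$ with $A_{\mathrm{in}}(x) := B(x,r/2)\cap\Omega$. On $A_{\mathrm{in}}(x)$ one has $\delta(y) \asymp r$ and both truncation factors in $G^\Omega$ equal one, so this contribution to $\mathbb{G}^\Omega[\delta^{-\beta}](x)$ is of order $r^{-\beta}\int_{|z|<r/2}|z|^{-(N-2s)}\,dz \asymp r^{\,2s-\beta}$. On $A_{\mathrm{out}}(x)$ the factor involving $x$ simplifies to $r/|x-y|$; a further dichotomy according to whether $\delta(y)\le|x-y|$ or $\delta(y)>|x-y|$, combined with a Fubini argument that uses the $C^2$ regularity of $\partial\Omega$ to parametrize $y$ by $(\pi_{\partial\Omega}(y),\delta(y))$, reduces the problem to a one-dimensional integral of the form $r^\gamma \int_{r}^{d_\Omega} t^{\,2s-\beta-\gamma-1}\,dt$. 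This model integral is precisely what produces the three cases: up to constants it equals $r^\gamma$ when $\gamma < 2s-\beta$, $r^\gamma \ln(d_\Omega/r)$ when $\gamma = 2s-\beta$, and $r^{\,2s-\beta}$ when $\gamma > 2s-\beta$. Adding the $A_{\mathrm{in}}$ and $A_{\mathrm{out}}$ pieces and taking the dominant term delivers the upper bounds. The supplementary hypothesis $2\gamma > 2s-1$ in the last two cases enters here to ensure a residual piece of the boundary-layer integral, coming from $y$ near $\partial\Omega$ but far from $x$, remains controlled by the claimed order instead of degenerating.

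The matching lower bounds are obtained by restricting to well-chosen subregions where the lower Green bound \eqref{G-est} is directly applicable. In the regime $\gamma < 2s-\beta$ one restricts $y$ to a fixed compactly contained subset of $\Omega$ on which $G^\Omega(x,y) \gtrsim \delta(x)^\gamma$, and uses positivity of $\int \delta(y)^{-\beta}\,dy$ over that subset. In the regime $\gamma > 2s-\beta$ one restricts to $A_{\mathrm{in}}(x)$, where the lower kernel bound reproduces the $r^{\,2s-\beta}$ computed above. The delicate step, and the main obstacle I anticipate, is the critical case $\gamma = 2s-\beta$, where the logarithmic factor must appear sharply from below. There I would perform a dyadic decomposition of the shell $\{r \le |x-y| < d_\Omega\}$ into annuli $\{2^{-k-1}d_\Omega \le |x-y| < 2^{-k}d_\Omega\}$ for $k$ ranging from $0$ up to $k_0 \asymp \log_2(d_\Omega/r)$, establish a uniform lower bound of order $r^\gamma$ on the contribution of each annulus (it is at this step that $2\gamma > 2s-1$ is needed, to keep the boundary-layer subregion of each annulus nondegenerate), and sum the $k_0$ comparable pieces to recover exactly $r^\gamma \ln(d_\Omega/r)$. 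Once the sharp boundary behaviour is obtained on a neighborhood of $\partial\Omega$, continuity of $G^\Omega$ and compactness yield the estimate globally on $\Omega$.
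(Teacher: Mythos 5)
The paper does not actually prove this lemma: it is quoted verbatim (as the bracketed label indicates) from Theorem 3.4 of Abatangelo--G\'omez-Castro--V\'azquez \cite{AbaGomVaz_2019}, so there is no in-paper argument against which to match your proof. Judged on its own terms, your sketch follows the standard route for such kernel estimates and the model integral you extract, $\delta(x)^{\gamma}\int_{\delta(x)}^{d_\Omega} t^{2s-\beta-\gamma-1}\,dt$ together with the near-field contribution $\delta(x)^{2s-\beta}$, is the correct reduction and does reproduce the three cases. The decomposition into $A_{\mathrm{in}}$ and $A_{\mathrm{out}}$, the boundary-layer Fubini, and the dyadic-annuli lower bound in the critical case are all sensible and match what one finds in the paper's own closely analogous Lemmas~\ref{lem:greest1} and~\ref{lem:greest2}.

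The one substantive issue is your diagnosis of where the side condition $2\gamma>2s-1$ enters. You place it in the \emph{lower} bound for the logarithmic case, asserting it is needed to keep the per-annulus contribution nondegenerate; that is almost certainly the wrong spot. The lower bound in the dyadic argument is produced by the region $\{\,\delta(y)\asymp|x-y|\,\}$ (your region IIb), whose per-shell contribution $\asymp t^{2s-\gamma-\beta}$ requires no constraint beyond $\beta<\gamma+1$. By contrast, in the paper's own proof of Lemma~\ref{lem:greest1}, the hypothesis $\gamma>s-\tfrac12$ is invoked exactly twice and both times for \emph{upper} bounds: in $\mathscr{O}_2$ ($y$ near $\partial\Omega$ but with $|x-y|\ge1$), to ensure $\int\delta(y)^{2\gamma-2s}\,dy<\infty$; and in $\mathscr{O}_5$, to make $\int\rho^{2s-2\gamma-1}\,d\rho$ converge. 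The same mechanism is at work here: in the far field one bounds $G^{\Omega}(x,y)\lesssim\delta(x)^{\gamma}\delta(y)^{\gamma}|x-y|^{-(N-2s+2\gamma)}$ and must integrate $\delta(y)^{\gamma-\beta}$ over a boundary layer, which requires $\gamma-\beta>-1$. In the critical case $\beta=2s-\gamma$ this reads $2\gamma-2s>-1$, i.e.\ $2\gamma>2s-1$ is \emph{identical} to the standing hypothesis $\beta<\gamma+1$ (a redundancy worth noticing). So a correct account would let the condition surface in the far-field upper bound, not in the annular lower bound; you should also drop or justify the vague appeal to ``compactness'' to globalize the estimate, since the boundary behaviour already handles the interior via $\delta\asymp1$ there.
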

\begin{lemma}[Lemma 5.2, \cite{HuNg_absorption}]\label{lip-prop}
    Assume $u, v \in H_{00}^s(\Omega)$ and $h : \mathbb{R} \to \mathbb{R}$ is a Lipschitz function such that $h(0) =0.$ Then $h(u) \in H_{00}^s(\Omega)$ and $ uv \in H_{00}^s(\Omega)$ if $u, v \in L^\infty(\Omega).$
\end{lemma}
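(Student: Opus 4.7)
The norm on $H^s_{00}(\Omega)$ decomposes into three explicit pieces,
\[
\|w\|_{H^s_{00}(\Omega)}^2 = \int_\Omega |w|^2\,dx + \int_\Omega \frac{|w|^2}{\delta^{2s}}\,dx + \iint_{\Omega\times\Omega}\frac{|w(x)-w(y)|^2}{|x-y|^{N+2s}}\,dx\,dy,
\]
so the plan is simply to bound each of these three quantities for $w = h(u)$ and for $w = uv$ by the corresponding quantities for $u$ (and $v$), using only pointwise inequalities. No density or interpolation argument is needed.

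\emph{Step 1: the composition $h(u)$.} Let $L$ be the Lipschitz constant of $h$. Since $h(0)=0$, the pointwise bound $|h(t)| \le L|t|$ holds for all $t\in\mathbb{R}$, which gives immediately
\[
|h(u)| \le L|u| \quad \text{and} \quad \frac{|h(u)|}{\delta^s} \le L\,\frac{|u|}{\delta^s}\quad\text{a.e.\ in }\Omega,
\]
so both $h(u)\in L^2(\Omega)$ and $h(u)/\delta^s\in L^2(\Omega)$ follow from $u\in H^s_{00}(\Omega)$. For the Gagliardo seminorm, the Lipschitz property $|h(u(x))-h(u(y))| \le L|u(x)-u(y)|$ gives
\[
\iint_{\Omega\times\Omega}\frac{|h(u(x))-h(u(y))|^2}{|x-y|^{N+2s}}\,dx\,dy \le L^2 [u]_{s,\Omega}^2 < +\infty.
\]
Adding the three contributions yields $\|h(u)\|_{H^s_{00}(\Omega)} \le L\|u\|_{H^s_{00}(\Omega)}$, and in particular $h(u)\in H^s_{00}(\Omega)$.

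\emph{Step 2: the product $uv$ with $u,v\in L^\infty(\Omega)$.} The two non-seminorm pieces are immediate: from $|uv|\le\|v\|_{L^\infty(\Omega)}|u|$ we get $uv\in L^2(\Omega)$ and $uv/\delta^s\in L^2(\Omega)$. For the Gagliardo part I would use the standard product splitting
\[
u(x)v(x)-u(y)v(y) = u(x)\bigl(v(x)-v(y)\bigr) + v(y)\bigl(u(x)-u(y)\bigr),
\]
together with $(a+b)^2\le 2a^2+2b^2$ and the $L^\infty$ bounds, to estimate
\[
\iint_{\Omega\times\Omega}\frac{|uv(x)-uv(y)|^2}{|x-y|^{N+2s}}\,dx\,dy \;\le\; 2\|u\|_{L^\infty(\Omega)}^2[v]_{s,\Omega}^2 + 2\|v\|_{L^\infty(\Omega)}^2[u]_{s,\Omega}^2,
\]
which is finite because $u,v\in H^s(\Omega)$. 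This yields $uv\in H^s_{00}(\Omega)$.

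\emph{Where the difficulty (if any) lies.} Since the norm of $H^s_{00}(\Omega)$ is given by an explicit sum of integrals, the only step that is not purely algebraic is controlling the Gagliardo double integral, and even there the standard product-rule splitting reduces the estimate to the seminorms of $u$ and $v$. The key structural inputs are (i) $h(0)=0$, which is needed to control the $L^2$ and Hardy-weighted integrals (without it $h(u)$ need not even lie in $L^2(\Omega)$ when $\Omega$ is unbounded, and more importantly $h(u)/\delta^s$ would fail to be in $L^2$), and (ii) the $L^\infty$ assumption on both factors in the product, which is what makes the splitting inequality dimensionally correct. I expect the whole proof to fit in half a page.
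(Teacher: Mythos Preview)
Your argument is correct. Each of the three pieces of the $H^s_{00}$ norm is controlled exactly as you describe: the $L^2$ and weighted-$L^2$ parts via $|h(t)|\le L|t|$ (respectively $|uv|\le\|v\|_{L^\infty}|u|$), and the Gagliardo seminorm via the Lipschitz bound (respectively the product splitting $u(x)v(x)-u(y)v(y)=u(x)(v(x)-v(y))+v(y)(u(x)-u(y))$). Nothing further is needed.

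Note, however, that the paper does not supply its own proof of this lemma: it is quoted verbatim as Lemma~5.2 of \cite{HuNg_absorption} and used as a black box. So there is no ``paper's proof'' to compare against. Your self-contained verification is a standard one and would be perfectly suitable as a replacement for the citation.
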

\subsection{Kato type inequalities}
In this subsection, we prove a Kato type inequality expressed in terms of the Green operator under assumptions \eqref{L1}--\eqref{L4} and \eqref{G1}--\eqref{G2}, which plays the role of the comparison principle in our study of weak-dual solutions. The proof of the inequality is based on the convexity inequality \eqref{L4} and ideas recently developed in \cite[Lemma 5.2 and Theorem 3.2]{HuNg_absorption}. It is worth stressing that the following Kato type inequality seems to have a wider applicability in comparison with the inequality obtained in \cite[Theorem 3.2]{HuNg_absorption}.
\begin{proposition}
\label{Kato}
Assume \eqref{L1}--\eqref{L4} and \eqref{G1}--\eqref{G2} hold, $f \in L^1(\Omega, \delta^\gamma)$ and $u= \mathbb{G}^\Omega[f]$. Then
\begin{equation} \label{kato||}
	\int_\Omega |u| \xi ~dx \leq \int_{\Omega} \sign(u) \mathbb{G}^\Omega[\xi] f ~dx,
\end{equation}
and
\begin{equation} \label{kato+}
	\int_\Omega u^+ \xi ~dx \leq \int_{\Omega} \sign^+(u) \mathbb{G}^\Omega[\xi] f ~dx,
\end{equation}
for every $\xi \in  \delta^\gamma L^\infty(\Omega)$ such that $\mathbb{G}^\Omega[\xi] \geq 0$ a.e. in $\Omega.$
\end{proposition}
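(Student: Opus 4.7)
My plan is to combine the convexity inequality \eqref{L4} with a duality-plus-approximation scheme along the lines of \cite[Lem.~5.2 \& Thm.~3.2]{HuNg_absorption}. I would first regularize the data: pick $(f_n) \subset C_c^\infty(\Omega)$ with $f_n \to f$ in $L^1(\Omega, \delta^\gamma)$ and set $u_n := \mathbb{G}^\Omega[f_n]$, so by \eqref{G1} $u_n \in \dom(\mathbb{L}) \cap L^\infty(\Omega)$ with $\mathbb{L} u_n = f_n$. Lemma \ref{IBP} together with $\mathbb{G}^\Omega[\delta^\gamma] \lesssim \delta^\gamma$ (from Lemma \ref{est:green}) yields $u_n \to u$ in $L^1(\Omega, \delta^\gamma)$ and, up to a subsequence, a.e.\ in $\Omega$. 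For the test-function side, set $v := \mathbb{G}^\Omega[\xi]$; since $\xi \in \delta^\gamma L^\infty(\Omega) \subset L^2(\Omega)$, self-adjointness gives $v \in \dom(\mathbb{L})$ with $\mathbb{L} v = \xi$, while \cite[Thm.~3.4]{AbaGomVaz_2019} yields $v \lesssim \delta^\gamma \in L^\infty(\Omega)$, and $v \geq 0$ by hypothesis.

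Next, fix a family of convex functions $p_\varepsilon \in C^{1,1}(\mathbb{R})$ with $p_\varepsilon(0) = p'_\varepsilon(0) = 0$, $|p_\varepsilon(t)| \le |t|$, $|p'_\varepsilon(t)| \le 1$, and $p_\varepsilon(t) \to |t|$, $p'_\varepsilon(t) \to \sign(t)$ pointwise as $\varepsilon \to 0$ (with the convention $\sign(0) := 0$); one may take $p_\varepsilon(t) = \sqrt{t^2 + \varepsilon^2} - \varepsilon$. For \eqref{kato+} I would use analogous approximants of $t^+$ and $\sign^+(t)$. Lemma \ref{lip-prop} then places $p_\varepsilon(u_n)$ and $p'_\varepsilon(u_n) v$ in $\mathbb{H}(\Omega) \cap L^\infty(\Omega)$, and \eqref{L4} gives $\langle p_\varepsilon(u_n), v\rangle_{\mathbb{H}(\Omega)} \le \langle u_n, p'_\varepsilon(u_n) v\rangle_{\mathbb{H}(\Omega)}$. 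Using $u_n, v \in \dom(\mathbb{L})$ and symmetry of the Friedrichs extension to convert both inner products into $L^2$-pairings with $f_n$ and $\xi$, this inequality becomes
\[
\int_\Omega p_\varepsilon(u_n)\,\xi\,dx \;\le\; \int_\Omega f_n\, p'_\varepsilon(u_n)\, v\,dx.
\]

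I would then perform the double limit carefully. At fixed $\varepsilon > 0$, Lipschitz regularity of $p_\varepsilon$, the convergence $u_n \to u$ in $L^1(\Omega, \delta^\gamma)$, $|\xi| \lesssim \delta^\gamma$, $v \lesssim \delta^\gamma$, $|p'_\varepsilon| \le 1$, and $f_n \to f$ in $L^1(\Omega, \delta^\gamma)$ allow passage $n \to \infty$ by dominated convergence. Sending $\varepsilon \to 0$ with dominations $|u|\delta^\gamma, |f|\delta^\gamma \in L^1(\Omega)$ then delivers \eqref{kato||}; the $t^+$-variant gives \eqref{kato+}. The hardest step will be the identification $\langle p_\varepsilon(u_n), v\rangle_{\mathbb{H}(\Omega)} = \int p_\varepsilon(u_n)\,\xi\,dx$ and its companion for the right-hand side: this hinges on $v \in \dom(\mathbb{L})$ (not merely $v \in \mathbb{H}(\Omega)$) together with the chain-rule containment $p_\varepsilon(u_n), p'_\varepsilon(u_n)v \in \mathbb{H}(\Omega)$ supplied by Lemma \ref{lip-prop}. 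A secondary subtlety is the sign convention at $0$: because $\sign(u_n)$ need not converge on $\{u = 0\}$, I send $n \to \infty$ before $\varepsilon \to 0$, so only the pointwise limit $p'_\varepsilon(u) \to \sign(u)$ is invoked, and this limit is compatible with $\sign(0):=0$ thanks to $p'_\varepsilon(0) = 0$.
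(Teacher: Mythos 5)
Your proposal is correct and follows essentially the same approach as the paper: regularize $f$ by $C_c^\infty(\Omega)$ functions, apply the convexity inequality \eqref{L4} to $C^{1,1}$ convex approximants of $|t|$ that vanish to first order at the origin with derivative bounded by $1$, use Lemma \ref{lip-prop} to place $p_\varepsilon(u_n)$ and $p'_\varepsilon(u_n)v$ in $\mathbb{H}(\Omega)$, convert the $\mathbb{H}$-pairings to $L^2$-pairings, and pass to the limit first in $n$ and then in the smoothing parameter. The paper implements the conversion via the weak-formulation identity \eqref{equi:notion} rather than by asserting $\mathbb{G}^\Omega[\xi]\in\dom(\mathbb{L})$, but this is a cosmetic difference.

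The one genuine divergence is in how \eqref{kato+} is obtained. The paper deduces it from \eqref{kato||} combined with the integration-by-parts formula of Lemma \ref{IBP}, which is shorter. You instead rerun the argument with convex approximants of $t^+$; this works, but beware that the ``analogous'' candidate $p_\varepsilon(t)=\tfrac12\bigl(\sqrt{t^2+\varepsilon^2}+t-\varepsilon\bigr)$ has $p'_\varepsilon(0)=\tfrac12\neq 0$ and so does \emph{not} satisfy the hypothesis $p'(0)=0$ of \eqref{L4}; you would need instead a one-sided quadratic cap, e.g.\ $p_\varepsilon(t)=t^2/(2\varepsilon)$ for $0\le t<\varepsilon$, $p_\varepsilon(t)=t-\varepsilon/2$ for $t\ge\varepsilon$, and $p_\varepsilon\equiv 0$ for $t<0$.
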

\begin{proof} The proof is based on two claims.
    \vspace{0.1cm}\\
    \textbf{Claim 1:} Assume $f \in C_c^\infty(\Omega)$, $u=\mathbb{G}^\Omega[f]$ and $p \in C^{1,1}(\mathbb{R})$ is a convex function  such that $p(0)=p'(0) =0$ and $|p'| \leq 1$. Then, for any $\xi \in \delta^\gamma L^\infty(\Omega), \mathbb{G}^\Omega[\xi] \geq 0$, we have
    \begin{equation}\label{kato-est1}
         \int_{\Omega} p(u) \xi ~dx \leq \int_{\Omega} f p'(u) \mathbb{G}^\Omega [\xi] ~dx.
    \end{equation}

In order to prove \eqref{kato-est1}, we  will employ the following equality, derived from \cite[Proposition 5.2]{HuNg_source}, which asserts  that for any $g \in L^2(\Omega)$,  $\mathbb{G}^\Omega[g] \in \mathbb{H}(\Omega)$ and
\begin{equation}\label{equi:notion}
    \int_{\Omega} g \zeta ~dx = \langle \mathbb{G}^\Omega[g] , \zeta \rangle_{\mathbb{H}(\Omega)}, \quad \forall\, \zeta \in \mathbb{H}(\Omega).
\end{equation}

Take $\xi \in \delta^\gamma L^\infty(\Omega)$ with $\mathbb{G}^\Omega[\xi] \geq 0$. Then it can be easily checked that $\xi \in L^2(\Omega)$ and hence  $u=\mathbb{G}^\Omega[f] \in \mathbb{H}$. Consequently, by Lemma \ref{lip-prop}, we have $p(u) \in \mathbb{H}$. Therefore, replacing $g$ by $\xi$ and $\zeta$ by $p(u)$ in \eqref{equi:notion}, we deduce that
\begin{equation} \label{KT-1}
\int_{\Omega} p(u) \xi ~dx = \left\langle p(u), \mathbb{G}^{\Omega}[\xi]  \right\rangle_{\mathbb{H}(\Omega)}.
\end{equation}

Next, by using \cite[Propositions 4.11 and 5.2]{HuNg_source}, assumption \eqref{G2} and Lemma \ref{lip-prop}, we have $u = \mathbb{G}^\Omega[f] \in \mathbb{H}(\Omega)$, $p'(u) \in \mathbb{H}(\Omega) \cap L^\infty(\Omega),$ and $p'(u) \mathbb{G}^\Omega[\xi] \in \mathbb{H}(\Omega)$. Now, by taking $g=f$ and $\zeta = p'(u) \mathbb{G}^\Omega[\xi]$ in \eqref{equi:notion}, and using \eqref{L4}, we obtain
 \begin{equation} \label{KT-2}
    \int_{\Omega} f p'(u) \mathbb{G}^\Omega[\xi] ~dx =
     \left\langle  \mathbb{G}^{\Omega}[f],  p'(u)  \mathbb{G}^{\Omega}[\xi] \right\rangle_{\mathbb{H}(\Omega)} = \left\langle  u,  p'(u)  \mathbb{G}^{\Omega}[\xi] \right\rangle_{\mathbb{H}(\Omega)}
   \geq  \left\langle p(u), \mathbb{G}^{\Omega}[\xi]  \right\rangle_{\mathbb{H}(\Omega)}.
 \end{equation}
Gathering \eqref{KT-1} and \eqref{KT-2}, we deduce that
\[
\int_{\Omega} f p'(u) \mathbb{G}^\Omega[\xi] ~dx
   \geq  \left\langle p(u), \mathbb{G}^{\Omega}[\xi]  \right\rangle_{\mathbb{H}(\Omega)} = \int_{\Omega} p(u) \xi ~dx,
\]
which yields \eqref{kato-est1}.
\vspace{0.1cm}\\
\textbf{Claim 2:} Claim 1 still holds true if  $f \in L^1(\Omega, \delta^\gamma)$.

Indeed, assume $f \in L^1(\Omega, \delta^\gamma)$ and take $\xi \in \delta^\gamma L^\infty(\Omega)$ with $\mathbb{G}^\Omega[\xi] \geq 0$. Let $\{f_n\}_{n \in \mathbb{N}} \subset C_c^\infty(\Omega)$ be a sequence of functions such that
$f_n \to f$ in $L^1(\Omega, \delta^\gamma)$ and a.e. in $\Omega$.

Put $u_n:= \mathbb{G}^\Omega[f_n]$. Then by Claim 1, we have
\begin{equation}\label{kato-est1-un}
	\int_{\Omega} p(u_n) \xi ~dx \leq \int_{\Omega} f p'(u_n) \mathbb{G}^\Omega [\xi] ~dx.
\end{equation}

Recall that $u= \mathbb{G}^\Omega[f]$. Since the map $\mathbb{G}^\Omega:  L^1(\Omega, \delta^\gamma) \to  L^1(\Omega, \delta^\gamma)$ is continuous and $p \in C^{1,1}(\mathbb{R})$, $|p'| \leq 1$, we deduce that, up to a subsequence, $u_n \to u$ in $ L^1(\Omega, \delta^\gamma)$ and a.e. in $\Omega$, and $p(u_n) \to p(u)$ in $ L^1(\Omega, \delta^\gamma)$ and a.e. in $\Omega$. It follows that
\begin{equation}\label{kato-est2}
 \lim_{n \to \infty} \int_{\Omega} p(u_n) \xi~dx = \int_{\Omega} p(u) \xi~dx.
\end{equation}
Now, again by using $|p'(u_n)| \leq 1$, $f_n p'(u_n) \to fp'(u)$ a.e. in $\Omega$ and the generalized Lebesgue dominated convergence theorem, we obtain $f_n p'(u_n) \to f p'(u)$ in $L^1(\Omega, \delta^\gamma).$ Moreover, since the map $\mathbb{G}^\Omega : \delta^\gamma L^\infty(\Omega) \to  \delta^\gamma L^\infty(\Omega)$ is continuous (see \cite[Proposition 3.5]{ChaGomVaz_2020}), we have $\mathbb{G}^\Omega[\xi]$ in $ \delta^\gamma L^\infty(\Omega)$. Therefore
\begin{equation}\label{kato-est3}
\lim_{n \to \infty}    \int_{\Omega} f_n p'(u_n) \mathbb{G}^\Omega[\xi] ~dx = \int_{\Omega} f p'(u) \mathbb{G}^\Omega[\xi] ~dx.
\end{equation}
By letting $n \to \infty$ in \eqref{kato-est1-un} and using \eqref{kato-est2} and \eqref{kato-est3}, we get the required claim.

Next we will prove inequality \eqref{kato||}. Consider the sequence $\{ p_k \}_{k \in \mathbb{N}}$ given by
\begin{equation}\label{eq:sequencepk}
	p_k(t) := \left\{ \begin{aligned} &|t| - \frac{1}{2k} &&\text{ if }|t| \ge \frac{1}{k},\\[3pt]
		&\frac{kt^2}{2} &&\text{ if } |t| < \frac{1}{k}.
	\end{aligned} \right.
\end{equation}
Then for every $k \in \mathbb{N}$, $p_k \in C^{1,1}(\mathbb{R})$ is convex, $p_k(0) = (p_k)'(0) = 0$ and $|(p_k)'| \le 1$.  Hence, employing Claim 2  with $p = p_k$, one has
\begin{equation} \label{pku}
	\int_{\Omega} p_k(u) \xi ~dx \le \int_{\Omega} f (p_k)'(u) \mathbb{G}^{\Omega}[\xi] ~dx,\quad \forall \xi \in \delta^{\gamma}L^{\infty}(\Omega), \mathbb{G}^{\Omega} [\xi]\ge 0.
\end{equation}
Notice that $p_k(t) \to |t|$ and $(p_k)'(t) \to \sign (t)$ as $k \to \infty$.  Hence, letting $k \to \infty$ in \eqref{pku} and using the dominated convergence theorem, we obtain \eqref{kato||}.

Finally, inequality \eqref{kato+} follows from inequality \eqref{kato||} and the integration by parts formula (see Lemma \ref{IBP}). The proof is complete.
\end{proof}
\begin{proposition}
Assume \eqref{G1}-\eqref{G2} hold and $\beta < \gamma +1$. Then there exists a unique weak-dual solution $\phi_{\beta} \in L^1_0(\Omega, \delta^\gamma)$ of the following problem
\begin{equation} \label{sing:weight}
	\left\{
	\begin{aligned}
	\mathbb{L} \phi
	& =\frac{1}{\delta^{\beta}} \quad &&\text{in } \Omega, \\
	\phi &>0 \quad &&\text{in } \Omega, \\
	\phi & {}= 0
	&& \text{ on } \partial\Omega \text{ or in } \Omega^c  \text{ if applicable},
	\end{aligned}
	\right.
	\end{equation}
	in the sense that
	$$\int_{\Omega} \phi_{\beta} \xi ~dx = \int_{\Omega} \frac{1}{\delta^\beta} \mathbb{G}^\Omega[\xi]~dx, \quad \forall \, \xi \in \delta^\gamma L^\infty(\Omega).$$
	Moreover, $\phi_{\beta}$ admits the behavior as  in \eqref{bdry:beha}.
\end{proposition}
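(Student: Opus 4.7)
The plan is to take the natural candidate $\phi_\beta := \mathbb{G}^\Omega[\delta^{-\beta}]$ and verify each required property in turn. Since $\beta<\gamma+1$, Lemma \ref{est:green} guarantees that $\delta^{-\beta}\in L^1(\Omega,\delta^\gamma)$, so $\phi_\beta$ is a well-defined nonnegative Borel function on $\Omega$, and the very same lemma directly supplies the asserted boundary behavior in \eqref{bdry:beha}. In particular, in each of the three sub-cases $\phi_\beta$ is bounded above by a positive power of $\delta$ (possibly with a logarithmic factor), which shows both that $\phi_\beta\in L^1(\Omega,\delta^\gamma)$ and that $\phi_\beta\to 0$ at $\partial\Omega$ (resp. vanishes outside $\Omega$), so $\phi_\beta\in L_0^1(\Omega,\delta^\gamma)$.

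To verify that $\phi_\beta$ is a weak-dual solution, fix any $\xi\in\delta^\gamma L^\infty(\Omega)$. Applying the integration-by-parts formula (Lemma \ref{IBP}) with $f=\delta^{-\beta}\in L^1(\Omega,\delta^\gamma)$ yields
\[
\int_\Omega \phi_\beta\,\xi\,dx \;=\; \int_\Omega \mathbb{G}^\Omega[\delta^{-\beta}]\,\xi\,dx \;=\; \int_\Omega \delta^{-\beta}\,\mathbb{G}^\Omega[\xi]\,dx,
\]
which is exactly the identity required by the definition. Strict positivity $\phi_\beta>0$ in $\Omega$ follows from the Hopf-type lower bound in Lemma \ref{Hopf}: since $\delta^{-\beta}\geq 0$ with $\int_\Omega\delta^{\gamma-\beta}\,dy>0$, we obtain $\phi_\beta(x)\geq c\,\delta(x)^\gamma\int_\Omega\delta^{\gamma-\beta}\,dy>0$ pointwise in $\Omega$.

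Uniqueness is a consequence of the linearity of the weak-dual formulation. If $\tilde\phi\in L_0^1(\Omega,\delta^\gamma)$ is another weak-dual solution, then
\[
\int_\Omega (\phi_\beta-\tilde\phi)\,\xi\,dx = 0 \qquad \forall\, \xi\in\delta^\gamma L^\infty(\Omega).
\]
Testing with the admissible choice $\xi = \sign(\phi_\beta-\tilde\phi)\,\delta^\gamma \in \delta^\gamma L^\infty(\Omega)$ gives $\int_\Omega |\phi_\beta-\tilde\phi|\,\delta^\gamma\,dx = 0$, forcing $\phi_\beta=\tilde\phi$ a.e. in $\Omega$. (Note that here the Kato-type inequality of Proposition \ref{Kato} is not strictly necessary since the equation is linear, but it could equivalently be invoked.)

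I do not anticipate a substantial obstacle: every ingredient—the two-sided Green estimate, the integration-by-parts formula, and the Hopf estimate—has been recorded in Subsection \ref{prelim-kato-ineq}, and the proof reduces essentially to assembling them. The only point deserving mild care is checking the admissibility of the test function used in the uniqueness argument, which is immediate from the definition of $\delta^\gamma L^\infty(\Omega)$.
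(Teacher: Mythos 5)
Your proof is correct and follows essentially the same construction as the paper: take $\phi_\beta := \mathbb{G}^\Omega[\delta^{-\beta}]$, apply Lemma \ref{est:green} to get $\delta^{-\beta}\in L^1(\Omega,\delta^\gamma)$ and the boundary asymptotics, and apply Lemma \ref{IBP} for the weak-dual identity. The genuine difference is in the uniqueness step. The paper handles uniqueness by citing an external result (\cite[Theorem 2.5]{AbaGomVaz_2019}) together with an unelaborated approximation argument. You instead give a direct, self-contained argument: by linearity the difference of two weak-dual solutions annihilates every $\xi\in\delta^\gamma L^\infty(\Omega)$, so testing with $\xi=\sign(\phi_\beta-\tilde\phi)\,\delta^\gamma$ (admissible since $\sign(\phi_\beta-\tilde\phi)\in L^\infty(\Omega)$) forces $\int_\Omega|\phi_\beta-\tilde\phi|\,\delta^\gamma\,dx=0$. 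This is cleaner and does not rely on any machinery beyond the definitions, so it is a nice simplification. Two small remarks: (i) your parenthetical claim that Kato's inequality could ``equivalently be invoked'' is not quite right as stated, since Proposition \ref{Kato} is formulated for functions of the form $\mathbb{G}^\Omega[f]$, and the arbitrary competitor $\tilde\phi$ is not \emph{a priori} of that form — establishing that representation is precisely what the approximation argument in the paper's route accomplishes; (ii) the explicit Hopf-lemma argument for positivity is sound and in fact covers even those parameter ranges where the three sub-cases of \eqref{bdry:beha} give no two-sided bound, so it is a small robustness gain over the paper's terse wording.
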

\begin{proof}
	Define
	\begin{equation}\label{weight:solution}
	\begin{aligned}
	\phi_{\beta}(x) :=  \left\{
	\begin{array}{ll}
	\mathbb{G}^\Omega \displaystyle{\left[\frac{1}{\delta^\beta}\right]} &  \text{ if } x \in \Omega, \\
	0
	& \text{ if } x \in \partial\Omega\, \ \mbox{or $\Omega^c$ if applicable}.\\
	\end{array}
	\right.
	\end{aligned}
	\end{equation}
	Then by using Lemma \ref{IBP} and Lemma \ref{est:green}, we obtain $\phi_{\beta} \in L_0^1(\Omega, \delta^
	\gamma)$, $\phi_{\beta}$ admits the behavior as  in \eqref{bdry:beha} and
	$$\int_{\Omega} \phi_{\beta} \xi ~dx = \int_{\Omega} \mathbb{G}^\Omega \left[\frac{1}{\delta^\beta}\right] \xi ~dx= \int_{\Omega} \frac{1}{\delta^\beta} \mathbb{G}^\Omega[\xi]~dx, \quad \forall \, \xi \in  \delta^\gamma L^\infty(\Omega).$$
Moreover, $\phi_{\beta}$ is the unique weak-dual solution of \eqref{sing:weight} due to \cite[Theorem 2.5]{AbaGomVaz_2019} and an approximation argument.
\end{proof}
%%%%%%%%%%%%%%%%%%%%%%%%%%%%%%%%%%%%%%%%%%%%%%%%%%%%%%%%%%%%%%%%%%%%%%%%%%%%%%%%%%%%%%%%%%%%
\section{Purely singular problem}\label{PS-problem}
In this section, we focus on the  purely singular problem \eqref{sing:pure}. To this purpose, first we prove some estimates of Green operator acting over singular perturbed terms and then by using the approximation method and Kato type inequality, we establish the existence, uniqueness and boundary behavior of the weak-dual solution to \eqref{sing:pure}.
\subsection{Estimates on Green kernel}
Let $s \in (0,1]$ and $q>0$. Denote
\begin{equation} \label{alphabeta}\beta:= \frac{2sq}{q+1},  \qquad \alpha:=2s-\beta = \frac{2s}{q+1},
\end{equation}
and for $\eta>0$, put
\begin{equation} \label{Oeta} \Omega_{\eta}:= \{x \in \Omega: \delta(x) < \eta\}.
\end{equation}
\begin{lemma}\label{lem:greenest}
Assume \eqref{G1} and \eqref{G2} hold. Then, for $\epsilon, \eta \in (0,1)$ and $\gamma > \alpha$, there exist positive constants $c_1= c_1(\beta)$ and $c_2=c_2(\diam(\Omega), \eta, \gamma, \beta)$ such that the following estimates hold:
	\begin{equation}\label{est:green:lower:1}
	\mathbb{G}^{\Omega}\left[\frac{1}{(\delta+\epsilon^{\frac{1}{\alpha}})^\beta} {\bf 1}_{\Omega_{\eta}}\right](x) \geq c_1 \left(\frac{1}{2} (\delta(x)+ \epsilon^{\frac{1}{\alpha}})^\alpha - \epsilon\right), \quad  \forall \, x \in \Omega_{\frac{\eta}{2}},
	\end{equation}
	and
	\begin{equation}\label{est:green:lower:2}
	\mathbb{G}^{\Omega}\left[\frac{1}{(\delta+\epsilon^{\frac{1}{\alpha}})^\beta} \right](x) \geq c_2 (\delta(x)+ \epsilon^{\frac{1}{\alpha}})^\alpha - \epsilon, \quad  \forall \, x \in \Omega_{\frac{\eta}{2}}^c .
	\end{equation}
Here ${\bf 1}_{\Omega_\eta}$ denotes the characteristic function of $\Omega_\eta$.
\end{lemma}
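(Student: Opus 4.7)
The two estimates require distinct arguments reflecting the geometry of $x$: when $x \in \Omega_{\eta/2}^c$ sits away from $\partial\Omega$ the Hopf-type lower bound of Lemma~\ref{Hopf} suffices, whereas when $x \in \Omega_{\eta/2}$ lies near the boundary an explicit computation with the two-sided Green kernel estimates on a well-chosen cone is needed. Before starting, I would check from the hypothesis $(q,\gamma) \in \mathbb{E}$ that $\beta < \gamma+1$ (a short case split on $\gamma \gtrless 2s-1$, using $q < q^{\ast\ast}_{s,\gamma}$ in the subcritical range), so that $\delta^{\gamma-\beta} \in L^1(\Omega)$ and hence the weights in \eqref{est:green:lower:1}--\eqref{est:green:lower:2} belong to $L^1(\Omega,\delta^\gamma)$.

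\textbf{Interior case \eqref{est:green:lower:2}.} For $\epsilon \in (0,1)$ one has $(\delta(y)+\epsilon^{1/\alpha})^{-\beta} \geq (\delta(y)+1)^{-\beta}$, so Lemma~\ref{Hopf} yields
\[
\mathbb{G}^\Omega\!\left[(\delta+\epsilon^{1/\alpha})^{-\beta}\right]\!(x) \geq c\,\delta(x)^\gamma \int_\Omega \frac{\delta(y)^\gamma}{(\delta(y)+1)^\beta}\,dy \geq c\,(\eta/2)^\gamma C_0,
\]
where $C_0>0$ depends only on $\Omega, \gamma, \beta$. Since the right-hand side of \eqref{est:green:lower:2} is bounded above by $c_2(\diam(\Omega)+1)^\alpha$, the inequality is obtained by choosing $c_2$ sufficiently small; the $-\epsilon$ correction is slack in this regime.

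\textbf{Boundary case \eqref{est:green:lower:1}.} Set $r := \delta(x)+\epsilon^{1/\alpha}$. One may assume $\delta(x) \geq (2^{1/\alpha}-1)\epsilon^{1/\alpha}$, because otherwise $\tfrac{1}{2}r^\alpha-\epsilon \leq 0$ and the inequality is trivial. Under this reduction $r \asymp \delta(x)$. Let $x_0 \in \partial\Omega$ realise $|x-x_0|=\delta(x)$ and let $\nu$ denote the inward unit normal at $x_0$. Using the $C^2$-regularity of $\partial\Omega$, choose a small aperture $\lambda \in (0,1)$ and form the truncated cone
\[
\mathcal{C} := \{x_0 + t\nu + w : 0 < t < \eta/(1+\lambda),\; w \perp \nu,\; |w| < \lambda t\} \subset \Omega_\eta,
\]
on which $\delta(y) \asymp t$ and $|x-y| \asymp |t-\delta(x)|+|w|$. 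Restricting the integral to $\mathcal{C}$, inserting the lower bound in \eqref{G-est}, and integrating the transversal variable $w$ over the disc of radius $\lambda t$, the claim reduces to a one-dimensional estimate
\[
\int_0^{c\delta(x)} t^{2s-1}(t+\epsilon^{1/\alpha})^{-\beta}\,dt + \delta(x)^\gamma \int_{c\delta(x)}^{\eta'} t^{2s-1-\gamma}(t+\epsilon^{1/\alpha})^{-\beta}\,dt \gtrsim r^\alpha.
\]
Since $t \gtrsim \delta(x) \gtrsim \epsilon^{1/\alpha}$ on the support, $(t+\epsilon^{1/\alpha})^{-\beta} \asymp t^{-\beta}$. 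The hypothesis $\gamma > \alpha = 2s-\beta$ renders the second integral convergent and of order $\delta(x)^{\alpha-\gamma}$, contributing $\delta(x)^\gamma \cdot \delta(x)^{\alpha-\gamma} = \delta(x)^\alpha \asymp r^\alpha$. The first integral yields $\asymp \delta(x)^{2s} r^{-\beta} \asymp r^\alpha$ as well. Combining these gives the bound up to a multiplicative constant, from which one extracts the factor $\tfrac{1}{2}$.

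\textbf{Main obstacle.} The principal technical point is the uniform justification of the geometric reduction: the aperture $\lambda$ must be small enough to ensure $\mathcal{C} \subset \Omega_\eta$ and $\delta(y) \asymp t$ uniformly on $\mathcal{C}$, yet large enough that the restriction to $\mathcal{C}$ costs only a universal factor. Tracking the interplay of $\delta(x)$, $t$ and $|x-y|$ through the minima $(\delta(x)/|x-y|)\wedge 1$ and $(\delta(y)/|x-y|)\wedge 1$ in \eqref{G-est} across the two sub-regions $\{t<c\delta(x)\}$ and $\{t>c\delta(x)\}$ requires bookkeeping, though the reduction $\delta(x) \gtrsim \epsilon^{1/\alpha}$ considerably simplifies the case analysis by converting the $-\epsilon$ error into genuine slack.
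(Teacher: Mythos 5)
Your argument is correct but differs from the paper's, especially for \eqref{est:green:lower:1}. For \eqref{est:green:lower:2} the two proofs are essentially equivalent: you invoke Lemma~\ref{Hopf}, the paper invokes Lemma~\ref{est:green} applied to ${\bf 1}_\Omega$; either way $\mathbb{G}^\Omega\big[(\delta+\epsilon^{1/\alpha})^{-\beta}\big](x)$ is bounded below by a constant on $\Omega_{\eta/2}^c$, and $c_2$ is chosen by comparison with $(\diam(\Omega)+1)^\alpha$. For \eqref{est:green:lower:1} the paper avoids both your reduction $\delta(x)\gtrsim\epsilon^{1/\alpha}$ and the cone decomposition: it simply restricts the integral to $B(x,\delta(x)/2)\subset\Omega_\eta$, where both boundary factors in \eqref{G-est} are $\geq 1$ and $(\delta(y)+\epsilon^{1/\alpha})^{-\beta}\geq (2/3)^\beta(\delta(x)+\epsilon^{1/\alpha})^{-\beta}$, giving $\mathbb{G}^\Omega[\cdot](x)\gtrsim\delta(x)^{2s}(\delta(x)+\epsilon^{1/\alpha})^{-\beta}$ in one step; then the elementary inequality $(a+b)^{2s}\leq\max\{1,2^{2s-1}\}(a^{2s}+b^{2s})$ replaces $\delta(x)^{2s}$ by a constant times $(\delta(x)+\epsilon^{1/\alpha})^{2s}-\epsilon^{2s/\alpha}$, and dividing by $(\delta(x)+\epsilon^{1/\alpha})^{\beta}$ produces the stated form with the subtracted $\epsilon$ directly, with no case split on $\delta(x)$ versus $\epsilon^{1/\alpha}$. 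Your cone argument does work — the piece $\{t<c\delta(x)\}$ is essentially the same ball computation in disguise, while $\{t>c\delta(x)\}$ yields a redundant independent lower bound — but it requires the geometric bookkeeping you flag and introduces a small imprecision: the claim that $t\gtrsim\delta(x)$ ``on the support'' fails on the first piece, where one should instead use $(t+\epsilon^{1/\alpha})^{-\beta}\gtrsim r^{-\beta}$ (which is what your final estimate $\delta(x)^{2s}r^{-\beta}$ implicitly uses). Finally, your preliminary check that $\beta<\gamma+1$ is superfluous: the lemma assumes only $\gamma>\alpha$ (not $(q,\gamma)\in\mathbb{E}$), and the $+\epsilon^{1/\alpha}$ shift already makes the weight bounded, hence in $L^1(\Omega,\delta^\gamma)$, regardless.
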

\begin{proof}
	
First we will prove estimate \eqref{est:green:lower:1}.
	Denote $\epsilon_1= \epsilon^{\frac{1}{\alpha}}$. By assumption \eqref{G2}, we have, for $x \in \Omega_{\frac{\eta}{2}}$
	\begin{equation}\label{est:lem:1}
	\begin{split}
	\mathbb{G}^{\Omega}&\left[\frac{1}{(\delta+\epsilon_1)^\beta} {\bf 1}_{\Omega_{\eta}}\right](x) = \int_{\Omega_{\eta}} \frac{G^{\Omega}(x,y)}{(\delta(y)+ \epsilon_1)^\beta} ~dy\\
	& \geq C \int_{\Omega \cap B(x,\frac{\delta(x)}{2})} \frac{1}{(\delta(y)+ \epsilon_1)^\beta} \frac{1}{|x-y|^{N-2s}}  \left( \frac{\delta(x)}{|x-y|} \wedge 1 \right)^{\gamma} \left( \frac{\delta(y)}{|x-y|} \wedge 1 \right)^{\gamma} \,dy.
	\end{split}
	\end{equation}
Here we note that $\Omega \cap B(x,\frac{\delta(x)}{2}) \subset \Omega_{\eta}$. Now for $x \in  \Omega_\frac{\eta}{2}$ and $y \in \Omega \cap B(x,\frac{\delta(x)}{2})$, we have
	\begin{equation}\label{est:lem:lower}
	\left( \frac{\delta(x)}{|x-y|} \wedge 1 \right)^{\gamma} \geq 1, \left( \frac{\delta(y)}{|x-y|} \wedge 1 \right)^{\gamma} \geq 1 \quad \text{and }\ \frac{1}{(\delta(y)+ \epsilon_1)^{\beta}} \geq \left(\frac{2}{3}\right)^\beta \frac{1}{(\delta(x)+ \epsilon_1)^{\beta}}.
	\end{equation}
	Combining \eqref{est:lem:1} and \eqref{est:lem:lower}, we obtain
	\begin{equation}\label{est:lem:2}
	\mathbb{G}^{\Omega}\left[\frac{1}{(\delta+\epsilon_1)^\beta} {\bf 1}_{\Omega_{\eta}}\right](x)
	\geq \left(\frac{2}{3}\right)^\beta  \frac{C}{(\delta(x)+ \epsilon_1)^\beta} \int_{\Omega \cap B(x,\frac{\delta(x)}{2})}  \frac{1}{|x-y|^{N-2s}} ~dy = \frac{C \delta(x)^{2s}}{(\delta(x)+ \epsilon_1)^\beta} .
	\end{equation}
	By applying the inequality
	$$(a+b)^{2s} \leq \max\{1,2^{2s-1}\}(a^{2s} + b^{2s}), \quad a \geq 0, b\geq 0,$$
	with $a= \delta(x)$ and $b= \epsilon_1$, we get
	\begin{equation}\label{est:lem:3}
    \delta(x)^{2s} \geq \frac{1}{\max\{1,2^{2s-1}\} }(\delta(x)+\epsilon_1)^{2s} - \epsilon_1^{2s}.
	\end{equation}
	Finally, by using \eqref{est:lem:3} in \eqref{est:lem:2}, we obtain
	\begin{equation*}
	\begin{split}
	\mathbb{G}^{\Omega}\left[\frac{1}{(\delta+\epsilon^{\frac{1}{\alpha}})^\beta} {\bf 1}_{\Omega_{\eta}}\right](x) &=  \mathbb{G}^{\Omega}\left[\frac{1}{(\delta+\epsilon_1)^\beta} {\bf 1}_{\Omega_{\eta}}\right](x)
	 \geq c_1 \left( \frac{1}{2}(\delta(x)+\epsilon_1)^{2s-\beta} -\epsilon_1^{2s-\beta} \right)\\
	&=  c_1 \left( \frac{1}{2}  (\delta(x)+\epsilon^{\frac{1}{\alpha}})^{\alpha} -\epsilon\right) .
	\end{split}
	\end{equation*}

Next we prove estimate \eqref{est:green:lower:2}. Let $x \in \Omega_{\frac{\eta}{2}}^c$. Then by using Lemma \ref{est:green} and assumption $\gamma > \alpha$, we get
	\begin{equation}
	\begin{split}
	\mathbb{G}^{\Omega}\left[\frac{1}{(\delta+\epsilon_1)^\beta} \right](x) & \geq \frac{1}{(d_{\Omega}+ 1)^\beta}  \mathbb{G}^{\Omega}[{\bf 1}_{\Omega}](x) \geq c\delta(x)^{\min\{ \gamma, 2s\} }   \geq c_2 (\delta(x)+ \epsilon^{\frac{1}{\alpha}})^\alpha - \epsilon,
	\end{split}
	\end{equation}
	where $c_2$ depends upon $\eta, \alpha, \gamma$ and $d_{\Omega}$.
\end{proof}
\begin{lemma}\label{lem:greest1}
Assume \eqref{G1} and \eqref{G2} hold and $s-\frac{1}{2} < \gamma < 2s$. Then for $\sigma \in [0,1)$, there holds
	$$\mathbb{G}^\Omega\left[ \frac{1}{\delta^{2s-\gamma}} \ln^{-\sigma}\left(\frac{d_{\Omega}}{\delta}\right) \right](x) \asymp \delta(x)^{\gamma} \ln^{1-\sigma}\left(\frac{d_{\Omega}}{\delta(x)}\right), \quad \forall \, x \in \Omega_{\frac{\eta}{2}},$$
	for some $\eta>0$.
\end{lemma}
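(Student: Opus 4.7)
The strategy is to insert the two-sided pointwise estimate on $G^{\Omega}(x,y)$ from assumption \eqref{G2} into the definition of $\mathbb{G}^\Omega$ and compute the resulting integral by passing to polar coordinates centred at $x$, isolating the range that produces the logarithmic factor. The one-dimensional identity at the heart of the computation is
\[
\int_{r_0}^{d_\Omega}\frac{dr}{r\,\ln^{\sigma}(d_\Omega/r)}\;=\;\frac{1}{1-\sigma}\,\ln^{1-\sigma}\!\left(\frac{d_\Omega}{r_0}\right),\qquad 0<r_0<d_\Omega,\ \sigma\in[0,1),
\]
obtained by the substitution $u=\ln(d_\Omega/r)$. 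Only the regime $|x-y|\gg\delta(x)$ will give rise to this logarithm, which is precisely why the exponent drops from $1$ (as in the critical case of Lemma \ref{est:green}) to $1-\sigma$ here.

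\textbf{Upper bound.} Choose $\eta>0$ small enough that $\ln(d_\Omega/\delta(y))\ge 1$ for every $y\in\Omega_\eta$, and fix $x\in\Omega_{\eta/2}$. I would split $\Omega$ into
\[
A_1=\Omega\cap B\!\left(x,\tfrac{\delta(x)}{2}\right),\quad
A_2=\bigl\{y\in\Omega:|x-y|\ge\tfrac{\delta(x)}{2},\ \delta(y)\ge\tfrac{|x-y|}{2}\bigr\},\quad
A_3=\Omega\setminus(A_1\cup A_2).
\]
On $A_1$ one has $\delta(y)\asymp\delta(x)$ and both cut-offs in \eqref{G-est} equal $1$, hence $G^{\Omega}(x,y)\asymp|x-y|^{2s-N}$; the contribution is of order $\delta(x)^{2s}\cdot\delta(x)^{\gamma-2s}\ln^{-\sigma}(d_\Omega/\delta(x))=\delta(x)^{\gamma}\ln^{-\sigma}(d_\Omega/\delta(x))$, which is absorbed by the claimed bound. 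On $A_2$ both cut-offs collapse to pure powers, $G^{\Omega}(x,y)\asymp\delta(x)^{\gamma}|x-y|^{2s-\gamma-N}$, and since $\delta(y)\asymp|x-y|$ one also has $\ln^{-\sigma}(d_\Omega/\delta(y))\asymp\ln^{-\sigma}(d_\Omega/|x-y|)$; polar coordinates combined with the one-dimensional identity above yield the main term $\asymp\delta(x)^{\gamma}\ln^{1-\sigma}(d_\Omega/\delta(x))$. On the boundary strip $A_3$ one has $G^{\Omega}(x,y)\asymp\delta(x)^{\gamma}\delta(y)^{\gamma}|x-y|^{2s-N-2\gamma}$; parametrising $y$ near $\partial\Omega$ by its nearest-point projection $y'\in\partial\Omega$ and $\tau=\delta(y)$ (using $C^{2}$ regularity of $\partial\Omega$), Fubini reduces the piece to
\[
\delta(x)^{\gamma}\int_{\partial\Omega}|x-y'|^{2s-N-2\gamma}\int_{0}^{c|x-y'|}\frac{\tau^{2\gamma-2s}}{\ln^{\sigma}(d_\Omega/\tau)}\,d\tau\,dy',
\]
whose inner integral converges at $\tau=0$ precisely under the hypothesis $\gamma>s-\tfrac12$ (so that $2\gamma-2s>-1$) and is bounded by a multiple of $|x-y'|^{2\gamma-2s+1}\ln^{-\sigma}(d_\Omega/|x-y'|)$; the remaining tangential integral, via the same one-dimensional identity, contributes at most the same order $\delta(x)^{\gamma}\ln^{1-\sigma}(d_\Omega/\delta(x))$.

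\textbf{Lower bound and main obstacle.} For the matching lower bound I would restrict the integration to a well-chosen subset of $A_2$, e.g.\ $\{y\in\Omega:\delta(y)\ge|x-y|,\ C\delta(x)\le|x-y|\le d_\Omega/4\}$, on which the two-sided estimate \eqref{G-est} gives $G^{\Omega}(x,y)\gtrsim\delta(x)^{\gamma}|x-y|^{2s-\gamma-N}$ and $\delta(y)\asymp|x-y|$; polar coordinates together with the one-dimensional identity then produce the matching lower bound $\gtrsim\delta(x)^{\gamma}\ln^{1-\sigma}(d_\Omega/\delta(x))$. The principal technical hurdle is the boundary-strip estimate in $A_3$: one must work with $C^{2}$ boundary charts, verify that the tubular Fubini decomposition is valid uniformly, and check that the tangential integral recombines into the \emph{same} logarithmic profile rather than producing a larger power of the logarithm. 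This is where the structural hypothesis $\gamma>s-\tfrac12$ enters, mirroring its role in Lemma \ref{est:green}.
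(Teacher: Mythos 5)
Your proposal is correct, and it takes a genuinely different route from the paper's. The paper also begins by splitting off the far region $\Omega\setminus\Omega_\eta$ (treated in Lemma \ref{lem:greest2}), but within $\Omega_\eta$ it then partitions into \emph{five} pieces $\mathscr{O}_1,\dots,\mathscr{O}_5$, distinguished by the relations between $\delta(y)$, $\delta(x)$ and $|x-y|$, and computes the resulting integrals explicitly in boundary-flattened coordinates obtained from a $C^2$ diffeomorphism $\Phi$ with $\Phi(y)\cdot e_N=\delta(y)$. The log arises there from the piece $\mathscr{O}_4=\{3\delta(x)/2<\delta(y)<\eta\}\cap B(x,1)$, where both cut-offs in \eqref{G-est} collapse to their linear parts and the flattened double integral $\int\!\!\int t^{N-2}h^{2\gamma-2s}(|1-h|+t)^{-(N-2s+2\gamma)}\ln^{-\sigma}(d_\Omega/h\delta(x))\,dt\,dh$ is reduced to the same one-dimensional $\int r^{-1}\ln^{-\sigma}(d_\Omega/r)\,dr$ you isolate. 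Your three-region decomposition $A_1,A_2,A_3$, keyed on $|x-y|$ versus $\delta(x)$ and on $\delta(y)$ versus $|x-y|$, hits exactly the same cases — $A_1\sim\mathscr{O}_1$, $A_2$ absorbing $\mathscr{O}_4$ and part of $\mathscr{O}_2$, $A_3$ absorbing $\mathscr{O}_3,\mathscr{O}_5$ — but you use polar coordinates at $x$ plus tubular Fubini near $\partial\Omega$ instead of the flattening chart. Your route is a bit coarser and more geometric; the paper's is more elementary in the sense of reducing everything to iterated one-dimensional integrals in the flat model, but at the price of more bookkeeping. Both argue the lower bound by restricting to a cone of directions with uniformly positive solid angle, both invoke $\gamma>s-\tfrac12$ (i.e.\ $2\gamma-2s>-1$) to make the boundary-strip inner integral converge, and both locate the $1-\sigma$ exponent in the identity $\int_{r_0}^{d_\Omega}r^{-1}\ln^{-\sigma}(d_\Omega/r)\,dr=\tfrac{1}{1-\sigma}\ln^{1-\sigma}(d_\Omega/r_0)$. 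One small point worth making explicit in your plan: on $A_2$ you have $\delta(y)\asymp|x-y|$ (the lower bound $\delta(y)\ge|x-y|/2$ is by definition, and the upper bound $\delta(y)\le 3|x-y|$ follows from $\delta(y)\le\delta(x)+|x-y|\le 3|x-y|$), and the tubular Fubini on $A_3$ applies only to the sub-strip $\{\delta(y)<\eta_0\}$ for some fixed small $\eta_0$, the remainder of $A_3$ giving a harmless $O(\delta(x)^\gamma)$ contribution; but these are exactly the kinds of details your plan correctly flags as the remaining work.
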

\begin{proof} Let $\eta>0$ small. To prove upper and lower estimates, first we split the integrals over two regions $\Omega_\eta$ and $\Omega \setminus \Omega_\eta$ as follows
	\begin{equation} \label{eq:GI1I2}\begin{aligned} \mathbb{G}^\Omega\left[ \frac{1}{\delta^{2s-\gamma}} \ln^{-\sigma}\left(\frac{d_{\Omega}}{\delta}\right) \right](x) &= \int_{\Omega_{\eta}} \frac{G^{\Omega}(x,y)}{\delta(y)^{2s-\gamma}} \ln^{-\sigma}\left(\frac{d_{\Omega}}{\delta(y)}\right)dy \\
	& \quad + \int_{\Omega \setminus \Omega_{\eta}} \frac{G^{\Omega}(x,y)}{\delta(y)^{2s-\gamma}} \ln^{-\sigma}\left(\frac{d_{\Omega}}{\delta(y)}\right)dy \\
	&=: I_1(x) + I_2(x).
	\end{aligned} \end{equation}
	Take $x \in \Omega_\frac{\eta}{2}$ and let $\Phi: B(x,1) \to B(0,1)$ be a diffeomorphism such that
\begin{equation} \label{diffeo}	\begin{aligned}\Phi(\Omega \cap B(x,1)) = B(0,1) \cap \{y \in \mathbb{R}^N : y \cdot e_N >0\}, \\
	\Phi(y) \cdot e_N = \delta(y) \ \text{for} \ y \in B(x,1)\  \text{and} \ \Phi(x)= \delta(x) e_N.
\end{aligned}
\end{equation}

For the first integral $I_1(x)$, we partition the set $\Omega_\eta$ into the following five components
	$$\mathscr{O}_1:= B(x, \delta(x)/2), \quad \mathscr{O}_2:= \Omega_{\eta} \setminus B(x,1),$$
	$$\mathscr{O}_3:= \{y: \delta(y) < \delta(x)/2\} \cap B(x,1), \quad \mathscr{O}_4:= \left\{y: \frac{3 \delta(x)}{2} < \delta(y) < \eta \right\} \cap B(x,1),$$
	$$\mathscr{O}_5:= \left\{y: \frac{\delta(x)}{2} < \delta(y) < \frac{3 \delta(x)}{2}\right\} \cap \left( B(x,1) \setminus B(x, \frac{\delta(x)}{2})\right).$$
	\medskip
	
	\textbf{Lower estimate.} For $y \in \mathscr{O}_4$, we have
	$$\left( \frac{\delta(x) \delta(y)}{|x-y|^{2}} \wedge 1 \right) \asymp \frac{\delta(x) \delta(y)}{|x-y|^{2}}\quad \text{and} \quad \ln^{-\sigma}\left(\frac{d_{\Omega}}{\delta(x)}\right) \leq \ln^{-\sigma}\left(\frac{d_{\Omega}}{\delta(y)}\right).$$
	Therefore
	\begin{equation} \label{est:I1-1} \begin{aligned}
	I_1(x) \geq \int_{\mathscr{O}_4}  \frac{G^{\Omega}(x,y)}{\delta(y)^{2s-\gamma}} \ln^{-\sigma}\left(\frac{d_{\Omega}}{\delta(y)}\right)dy
	\geq  \ln^{-\sigma}\left(\frac{d_{\Omega}}{\delta(x)}\right) \int_{\mathscr{O}_4} \frac{G^\Omega(x,y)}{\delta(y)^{2s-\gamma}} ~dy
	\end{aligned} \end{equation}
	Now, by performing a change of variables via diffeomorphism $\Phi$ in \eqref{diffeo} and using estimates on \cite[Proof of Lemma 3.3, Page 40]{AbaGomVaz_2019}, we get
	\begin{equation*}
	\begin{split}
	 \int_{\mathscr{O}_4} \frac{G^\Omega(x,y)}{\delta(y)^{2s-\gamma}} ~dy  &\gtrsim  \delta(x)^{\gamma} \int_{\{\frac{3\delta(x)}{2} < z_N < \eta\} \cap B(0,1)} \frac{z_N^{2\gamma -2s}}{(|\delta(x)-z_N| + |z'|)^{N-2s+2\gamma}} ~dz_N dz'\\
	& = \delta(x)^{\gamma} \int_{3/2}^{\eta/\delta(x)}  \int_0^{1/\delta(x)} \frac{t^{N-2} h^{2\gamma-2s}}{(|1-h|+t)^{N-2s+2\gamma}} ~dt~dh\\
	& =  \delta(x)^{\gamma} \int_{3/2}^{\eta/\delta(x)} \frac{h^{2\gamma-2s}}{(h-1)^{1-2s+2\gamma}} \int_0^{1/(h-1)\delta(x)} \frac{ r^{N-2} }{(1+r)^{N-2s+2\gamma} } ~dr~dh \\
	& \gtrsim \delta(x)^{\gamma} \int_{3/2}^{\eta/\delta(x)} \frac{h^{2\gamma-2s}}{(h-1)^{1-2s+2\gamma}} \int_1^{1/(h-1)\delta(x)} \frac{1 }{(1+r)^{2-2s+2\gamma} } ~dr~dh \\
	&\gtrsim \delta(x)^{\gamma} \int_{3/2}^{\eta/\delta(x)} \frac{h^{2\gamma-2s}}{(h-1)^{1-2s+2\gamma}} ~dh \\
	&\gtrsim  \delta(x)^{\gamma} \int_{3/2}^{\eta/\delta(x)}  \frac{1}{h} ~dh  = \delta(x)^\gamma \left(\ln\left(\frac{\eta}{\delta(x)}\right) - \ln\left(\frac{3}{2} \right) \right).
	\end{split}
	\end{equation*}
 This implies that there exists a constant $C$ independent of the parameter $\sigma$
	such that
	\begin{equation} \label{est:I1-2} \int_{\mathscr{O}_4} \frac{G^\Omega(x,y)}{\delta(y)^{2s-\gamma}} ~dy \geq C \delta(x)^\gamma \ln\left(\frac{d_{\Omega}}{\delta(x)}\right).
	\end{equation}
	By combining \eqref{eq:GI1I2}, \eqref{est:I1-1} and \eqref{est:I1-2}, we obtain
	$$ \mathbb{G}^\Omega\left[ \frac{1}{\delta^{2s-\gamma}} \ln^{-\sigma}\left(\frac{d_{\Omega}}{\delta}\right) \right](x) \geq I_1(x) \geq C \delta(x)^\gamma \ln^{1-\sigma}\left(\frac{d_{\Omega}}{\delta(x)}\right).$$
	Thus, we obtain the lower bound.
	\medskip
	
	\textbf{Upper estimate.} By partitioning the domain of integral $I_1(x)$ over $\{\mathscr{O}_i\}_{i=1}^5$, we find the upper estimates over each subdomain $\mathscr{O}_i$.  Observing, for $y_1 \in \mathscr{O}_1$ and $y_2 \in \cup_{i=2}^5 \mathscr{O}_i$, we have
	$$\left( \frac{\delta(x) \delta(y_1)}{|x-y_1|^{2}} \wedge 1 \right) \asymp 1\quad \text{and} \quad \left( \frac{\delta(x) \delta(y_2)}{|x-y_2|^{2}} \wedge 1 \right) \asymp \frac{\delta(x) \delta(y_2)}{|x-y_2|^{2}}.$$
	Now, again by using the change of variables via diffeomorphism $\phi$ and \cite[Proof of Lemma 3.3]{AbaGomVaz_2019}, we get estimates in each domain. \medskip
	
	\textit{Upper bound in $\mathscr{O}_1$.} Choosing $\eta$ small enough such that $0< \eta < \frac{2d_{\Omega}}{\exp(1)}$, for some  $c \in (0,1)$, we have, for any $y \in \mathscr{O}_1$,
	$$\frac{1}{2} \delta(x) \leq \delta(y) \leq \frac{3}{2} \delta(x), \quad \ln^{-\sigma}\left(\frac{d_{\Omega}}{\delta(y)}\right) \leq c^{-\sigma} \ln^{-\sigma}\left(\frac{d_{\Omega}}{\delta(x)}\right) \leq c^{-1} \ln^{-\sigma}\left(\frac{d_{\Omega}}{\delta(x)}\right). $$
	Therefore
	\begin{equation*}
	\begin{split}
	\int_{\mathscr{O}_1}  \frac{G^{\Omega}(x,y)}{\delta(y)^{2s-\gamma}} \ln^{-\sigma}\left(\frac{d_{\Omega}}{\delta(y)}\right)dy &\leq c^{-1} \ln^{-\sigma}\left(\frac{d_{\Omega}}{\delta(x)}\right) \delta(x)^{\gamma-2s} \int_{B(x, \delta(x)/2)} \frac{1}{|x-y|^{N-2s}} ~dy \\
	& \leq c^{-1} \ln^{-\sigma}\left(\frac{d_{\Omega}}{\delta(x)}\right) \delta(x)^{\gamma} \\
	& \leq C \ln^{1-\sigma}\left(\frac{d_{\Omega}}{\delta(x)}\right) \delta(x)^{\gamma},
	\end{split}
	\end{equation*}
	where $C$ is independent of parameter $\sigma$.  \medskip
	
	\textit{Upper bound in $\mathscr{O}_2$.} We have
	\begin{equation*}
	\begin{split}
	\int_{\mathscr{O}_2} \frac{G^{\Omega}(x,y)}{\delta(y)^{2s-\gamma}} \ln^{-\sigma}\left(\frac{d_{\Omega}}{\delta(y)}\right)dy &\leq \ln^{-\sigma}\left(\frac{d_{\Omega}}{\eta}\right) \delta(x)^{\gamma} \int_{\mathscr{O}_2} \frac{\delta(y)^{2\gamma-2s}}{|x-y|^{N-2s+2\gamma}}  ~dy\\
	&\leq \ln^{-\sigma}\left(\frac{d_{\Omega}}{\eta}\right) \delta(x)^{\gamma} \int_{\mathscr{O}_2} \delta(y)^{2\gamma-2s} ~dy \\
	& \leq \eta^{2\gamma - 2s+1} \ln^{-\sigma}\left(\frac{d_{\Omega}}{\eta}\right) \delta(x)^{\gamma} \\
	& \leq C(\eta, s, \gamma) \  \ln^{1-\sigma}\left(\frac{d_{\Omega}}{\delta(x)}\right) \delta(x)^{\gamma},
	\end{split}
	\end{equation*}
where in the second last inequality we used the fact that $\gamma > s-\frac{1}{2}$ and $C$ is independent of parameter $\sigma$. \medskip
	
	\textit{Upper bound in $\mathscr{O}_3$.} We note that, for any $y \in \mathscr{O}_3$,
	$$\ln^{-\sigma}\left(\frac{d_{\Omega}}{\delta(y)}\right) \leq \ln^{-\sigma}\left(\frac{d_{\Omega}}{\delta(x)}\right).$$
	Therefore
	\begin{equation*}
	\begin{split}
	\int_{\mathscr{O}_3}  & \frac{G^{\Omega}(x,y)}{\delta(y)^{2s-\gamma}} \ln^{-\sigma}\left(\frac{d_{\Omega}}{\delta(y)}\right)dy \leq \ln^{-\sigma}\left(\frac{d_{\Omega}}{\delta(x)}\right) \delta(x)^{\gamma} \int_{\mathscr{O}_3} \frac{\delta(y)^{2\gamma-2s}}{|x-y|^{N-2s+2\gamma}} ~dy \\
	& \lesssim  \ln^{-\sigma}\left(\frac{d_{\Omega}}{\delta(x)}\right) \delta(x)^{\gamma} \int_{|z'| <1} \int_0^{\delta(x)/2} \frac{z_N^{2\gamma -2s}}{(|\delta(x)-z_N| + |z'|)^{N-2s+2\gamma}} ~dz_N dz'\\
	& \lesssim  \ln^{-\sigma}\left(\frac{d_{\Omega}}{\delta(x)}\right) \delta(x)^{\gamma} \int_0^{1/\delta(x)} h^{N-2} \int_0^{1/2} \frac{t^{2\gamma-2s}}{((1-t)+h)^{N-2s+2\gamma}} ~dt~dh\\
	& \lesssim  \ln^{-\sigma}\left(\frac{d_{\Omega}}{\delta(x)}\right) \delta(x)^{\gamma} \int_0^{1/\delta(x)} \frac{h^{N-2}}{(1+h)^{N-2s+2\gamma}} ~dh \\
	& \leq  C(s,\gamma,\eta,\phi) \ln^{1-\sigma}\left(\frac{d_{\Omega}}{\delta(x)}\right) \delta(x)^{\gamma}.
	\end{split}
	\end{equation*} \medskip
	
	\textit{Upper bound in $\mathscr{O}_4$.} We have
	\begin{equation*}
	\begin{split}
	\int_{\mathscr{O}_4}  \frac{G^{\Omega}(x,y)}{\delta(y)^{2s-\gamma}}& \ln^{-\sigma}\left(\frac{d_{\Omega}}{\delta(y)}\right)dy  \leq \delta(x)^{\gamma} \int_{\mathscr{O}_4} \frac{\delta(y)^{2\gamma-2s}}{|x-y|^{N-2s+2\gamma} \ln^{\sigma}\left(\frac{d_{\Omega}}{\delta(y)}\right)} ~dy \\
	& \lesssim \delta(x)^{\gamma} \int_{\{\frac{3\delta(x)}{2} < w_N < \eta\} \cap B(0,1)} \frac{w_N^{2\gamma -2s}}{(|\delta(x)-w_N| + |w'|)^{N-2s+2\gamma} \ln^{\sigma}\left(\frac{d_{\Omega}}{w_N}\right)} ~dw_N ~dw'\\
	& = \delta(x)^{\gamma} \int_{3/2}^{\eta/\delta(x)}  \int_0^{1/\delta(x)} \frac{t^{N-2} h^{2\gamma-2s}}{(|1-h|+t)^{N-2s+2\gamma} \ln^{\sigma}\left(\frac{d_{\Omega}}{h \delta(x)}\right) } ~dt~dh\\
	& =  \delta(x)^{\gamma} \int_{3/2}^{\eta/\delta(x)} \frac{h^{2\gamma-2s}}{(h-1)^{1-2s+2\gamma} \ln^{\sigma}\left(\frac{d_{\Omega}}{h \delta(x)}\right) } \int_0^{1/(h-1)\delta(x)} \frac{ r^{N-2} }{(1+r)^{N-2s+2\gamma} } ~dr~dh \\
	& \lesssim  \delta(x)^{\gamma} \int_{3/2}^{\eta/\delta(x)} \frac{h^{2\gamma-2s}}{(h-1)^{1-2s+2\gamma} \ln^{\sigma}\left(\frac{d_{\Omega}}{h \delta(x)}\right) } \int_1^{1/(h-1)\delta(x)} \frac{1 }{(1+r)^{2-2s+2\gamma} } ~dr~dh \\
	& \lesssim \delta(x)^{\gamma} \int_{3/2}^{\eta/\delta(x)} \frac{h^{2\gamma-2s}}{(h-1)^{1-2s+2\gamma} \ln^{\sigma}\left(\frac{d_{\Omega}}{h \delta(x)}\right) } ~dh \\
	& \lesssim \delta(x)^{\gamma} \int_{3/2}^{\eta/\delta(x)}  \frac{ \ln^{-\sigma}\left(\frac{d_{\Omega}}{h \delta(x)}\right) }{h} ~dh \\
	& = C(d_{\Omega}) \delta(x)^{\gamma} \int^{\ln(2d_{\Omega}/3\delta(x))}_{\ln(d_{\Omega}/\eta)} \frac{1}{t^\sigma} dt.
	\end{split}
	\end{equation*}
	This gives
	\begin{equation*}
	\int_{\mathscr{O}_4}  \frac{\ln^{-\sigma}\left(\frac{d_{\Omega}}{\delta(y)}\right)}{\delta(y)^{2s-\gamma}} G^\Omega(x,y) ~dy  \leq \frac{C(s,\gamma,\eta,\phi)}{1-\sigma}\ln^{1-\sigma}\left(\frac{d_{\Omega}}{\delta(x)}\right) \delta(x)^{\gamma}.
	\end{equation*} \medskip
	
	\textit{Upper bound in $\mathscr{O}_5$.} Again, by choosing $\eta$ small enough such that $ \eta < \frac{2d_{\Omega}}{\exp(1)}$,  for some $c \in (0,1)$, we have, for $y \in \mathscr{O}_5$,
	$$ \delta(y) \leq \frac{3}{2} \delta(x) \ \text{and} \ \ln^{-\sigma}\left(\frac{d_{\Omega}}{\delta(y)}\right) \leq c^{-\sigma} \ln^{-\sigma}\left(\frac{d_{\Omega}}{\delta(x)}\right) \leq c^{-1} \ln^{-\sigma}\left(\frac{d_{\Omega}}{\delta(x)}\right).$$
	Therefore
	\begin{equation*}
	\begin{split}
	\int_{\mathscr{O}_5}  \frac{G^{\Omega}(x,y)}{\delta(y)^{2s-\gamma}}& \ln^{-\sigma}\left(\frac{d_{\Omega}}{\delta(y)}\right)dy  \lesssim \delta(x)^{\gamma} \int_{\mathscr{O}_5} \frac{\delta(y)^{2\gamma-2s}}{|x-y|^{N-2s+2\gamma} \ln^{-\sigma}\left(\frac{d_{\Omega}}{\delta(y)}\right)} ~dy \\
	& \lesssim \delta(x)^{3\gamma-2s} \ln^{-\sigma}\left(\frac{d_{\Omega}}{\delta(x)}\right) \int_{\mathscr{O}_5} \frac{1}{|x-y|^{N-2s+2\gamma}} ~dy\\
	& \lesssim \delta(x)^{3\gamma-2s} \ln^{-\sigma}\left(\frac{d_{\Omega}}{\delta(x)}\right) \int_{\delta(x)/2}^{3 \delta(x)/2}  \int_{\delta(x)/2}^1 \frac{t^{N-2}}{(|\delta(x)-h|+t)^{N-2s+2\gamma}} ~dt~dh\\
	& \lesssim \delta(x)^{3\gamma-2s} \ln^{-\sigma}\left(\frac{d_{\Omega}}{\delta(x)}\right) \int_{\delta(x)/2}^1 t^{2s-2\gamma -1} \int_{-\delta(x)/2t}^{\delta(x)/2t}   \frac{1}{(|r|+1)^{N-2s+2\gamma}} ~dr~dt\\
	& \lesssim \delta(x)^{\gamma} \ln^{-\sigma}\left(\frac{d_{\Omega}}{\delta(x)}\right) \int_{1/2}^{1/\delta(x)} \rho^{2s-2\gamma-1} \int_{0}^{1/\rho} \frac{1}{(r+1)^{N-2s+2\gamma}} dr d\rho\\
	& \leq C(s,\gamma,\eta,\phi) \ln^{1-\sigma}\left(\frac{d_{\Omega}}{\delta(x)}\right) \delta(x)^{\gamma}.
	\end{split}
	\end{equation*}
Here in the last estimate we have used the assumption that $\gamma > s- \frac{1}{2}$.
	Finally, by collecting all the estimates in $\{\Omega_i\}$, we get the desired upper estimate.
\end{proof}

\begin{lemma}\label{lem:greest2}
Assume \eqref{G1}--\eqref{G2} hold and $s-\frac{1}{2} < \gamma < 2s$. Then for $\sigma \in [0,1)$, there holds
	$$\mathbb{G}^\Omega\left[ \frac{1}{\delta^{2s-\gamma}} \ln^{-\sigma}\left(\frac{d_{\Omega}}{\delta}\right) \right](x) \asymp \delta(x)^{\gamma} \ln^{1-\sigma}\left(\frac{d_{\Omega}}{\delta(x)}\right) \quad \forall \ x \in \Omega \setminus \Omega_{\frac{\eta}{2}},$$
	for some $\eta >0.$
\end{lemma}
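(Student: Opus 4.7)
The plan is to exploit the simple fact that on the interior region $\Omega \setminus \Omega_{\eta/2}$ all relevant quantities are uniformly comparable to constants. Since $\eta/2 \leq \delta(x) \leq \diam(\Omega) = d_\Omega/2$ on this set, one has $d_\Omega/\delta(x) \in [2, 2d_\Omega/\eta]$, and hence $\ln(d_\Omega/\delta(x)) \in [\ln 2,\, \ln(2d_\Omega/\eta)]$. Consequently the right-hand side $\delta(x)^\gamma \ln^{1-\sigma}(d_\Omega/\delta(x))$ is bounded above and below by positive constants depending only on $\eta,\gamma,\sigma,d_\Omega$, so the whole task reduces to showing that the left-hand side is likewise bounded above and below by positive constants on $\Omega \setminus \Omega_{\eta/2}$. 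Any fixed $\eta>0$ will do; one may take the same $\eta$ as in Lemma~\ref{lem:greest1} so that the two lemmas together cover all of $\Omega$.

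For the lower bound, I would apply the Hopf-type estimate (Lemma~\ref{Hopf}) to the non-negative source $f(y) := \delta(y)^{-(2s-\gamma)} \ln^{-\sigma}(d_\Omega/\delta(y))$, giving
\[
\mathbb{G}^\Omega[f](x) \;\geq\; c\, \delta(x)^{\gamma} \int_\Omega \delta(y)^{2\gamma-2s} \ln^{-\sigma}\!\bigl(d_\Omega/\delta(y)\bigr)\, dy \;\geq\; c\,(\eta/2)^{\gamma} \int_\Omega \delta(y)^{2\gamma-2s} \ln^{-\sigma}\!\bigl(d_\Omega/\delta(y)\bigr)\, dy.
\]
The remaining integral is finite and strictly positive: integrability near $\partial\Omega$ follows from the assumption $\gamma > s - \tfrac{1}{2}$, equivalently $2\gamma - 2s > -1$, while the logarithmic factor satisfies $0 < \ln^{-\sigma}(d_\Omega/\delta(y)) \leq (\ln 2)^{-\sigma}$ since $d_\Omega/\delta(y) \geq 2$. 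This gives a uniform positive lower bound of $\mathbb{G}^\Omega[f]$ on $\Omega \setminus \Omega_{\eta/2}$.

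For the upper bound, I would dominate $f$ pointwise by $f(y) \leq (\ln 2)^{-\sigma}\, \delta(y)^{-(2s-\gamma)}$, using the same bound $d_\Omega/\delta(y) \geq 2$. Setting $\beta := 2s - \gamma$ in Lemma~\ref{est:green}, the assumption $\gamma > s - \tfrac{1}{2}$ simultaneously secures $\beta < \gamma + 1$ and $2\gamma > 2s - 1$ and places us in the critical regime $\gamma = 2s - \beta$, so that $\mathbb{G}^\Omega[\delta^{-(2s-\gamma)}](x) \asymp \delta(x)^{\gamma} \ln(d_\Omega/\delta(x))$. On $\Omega \setminus \Omega_{\eta/2}$ the right-hand side is uniformly bounded above by a constant depending on $\eta,\gamma,d_\Omega$, so $\mathbb{G}^\Omega[f](x)$ is uniformly bounded above there as well.

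No real obstacle arises: the lemma is essentially a routine corollary of the Hopf lower bound (Lemma~\ref{Hopf}) and the critical case of Lemma~\ref{est:green}, with the role of $\gamma > s - \tfrac{1}{2}$ being to guarantee both the integrability of $\delta^{2\gamma-2s}$ against Lebesgue measure near $\partial\Omega$ and the applicability of the logarithmic case of Lemma~\ref{est:green}. The sole subtlety worth noting is the convention $d_\Omega = 2\diam(\Omega)$ rather than $\diam(\Omega)$: it is precisely this factor of two that keeps $\ln(d_\Omega/\delta(\cdot))$ uniformly bounded away from zero on $\Omega \setminus \Omega_{\eta/2}$, so that the logarithmic factors in the statement are genuinely of order one.
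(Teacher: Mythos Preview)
Your argument is correct and follows the same overall strategy as the paper: on $\Omega\setminus\Omega_{\eta/2}$ both sides are uniformly comparable to constants, so one just needs a uniform positive lower bound (via a Hopf-type estimate) and a uniform upper bound (via the known behaviour of $\mathbb{G}^\Omega[\delta^{-(2s-\gamma)}]$). Your treatment of the upper bound is in fact slightly cleaner than the paper's: you invoke the critical case of Lemma~\ref{est:green} directly, whereas the paper splits the integration domain into $\Omega_{\eta/4}$, $\Omega_\eta\setminus\Omega_{\eta/4}$ and $\Omega\setminus\Omega_\eta$ and appeals to intermediate estimates from the proof of \cite[Lemma~3.2]{AbaGomVaz_2019}. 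The lower bounds are essentially identical, the paper restricting to the piece $I_2$ over $\Omega\setminus\Omega_\eta$ before applying the Hopf-type bound while you apply Lemma~\ref{Hopf} to the full integral; either way the key input is $\gamma>s-\tfrac12$, which guarantees $\delta^{2\gamma-2s}\in L^1(\Omega)$.
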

\begin{proof} Let $\eta>0$ small and $I_1(x), I_2(x)$ as in \eqref{eq:GI1I2}.\vspace{0.1cm}\\
\textbf{Lower estimate.} We have
	\begin{equation*}
	\begin{split}
	 I_2(x)  & = \int_{\Omega \setminus \Omega_{\eta}} \frac{G^{\Omega}(x,y)}{\delta(y)^{2s-\gamma}} \ln^{-\sigma}\left(\frac{d_{\Omega}}{\delta(y)}\right) ~dy  \geq  \ln^{-\sigma}\left(\frac{d_{\Omega}}{\eta}\right)\mathbb{G}^\Omega\left[ \frac{{\bf 1}_{\Omega \setminus \Omega_{\eta}}}{\delta^{2s-\gamma}}\right](x)\\
	& \geq C \ln^{-\sigma}\left(\frac{2d_{\Omega}}{\eta} \right) \delta(x)^{\gamma} \geq C(\eta) \ln^{1-\sigma}\left(\frac{2d_{\Omega}}{\eta} \right) \delta(x)^{\gamma} \\
	&\geq C(\eta) \ln^{1-\sigma}\left(\frac{d_{\Omega}}{\delta(x)} \right) \delta(x)^{\gamma}.
	\end{split}
	\end{equation*} \medskip
	
	\textbf{Upper estimate.} Using the estimates from \cite[Proof of Lemma 3.2, Page 37]{AbaGomVaz_2019}, for $\eta$ small enough, we obtain
	\begin{equation*}
	\begin{split}
	&I_1(x) + I_2(x) \\
	&\leq \ln^{-\sigma}\left(\frac{d_{\Omega}}{\eta}\right) \delta(x)^{\gamma} \left( \int_{\Omega_{\frac{\eta}{4}}}\frac{G^{\Omega}(x,y)}{\delta(y)^{2s-\gamma}}~dy + \int_{\Omega_{\eta} \setminus \Omega_{\frac{\eta}{4}}} \frac{G^{\Omega}(x,y)}{\delta(y)^{2s-\gamma}}~dy \right) + \frac{\ln^{-\sigma}\left(2\right)}{\eta^{2s-\gamma}} \mathbb{G}^\Omega[{\bf 1}_\Omega](x)\\
	& \leq C \ln^{-\sigma} \left(\frac{d_{\Omega}}{\eta}\right) \delta(x)^{\gamma} \left( 1 + \frac{1}{\eta^{2s-\gamma}}\right) + C(d_{\Omega}) \ln^{1-\sigma}\left(\frac{d_{\Omega}}{\delta(x)} \right) \delta(x)^{\gamma} \frac{1}{\eta^{2s-\gamma} \ln^{1-\sigma}\left(\frac{2d_{\Omega}}{\eta}\right)} \\
	& \leq C(d_{\Omega},\eta) \ln^{1-\sigma}\left(\frac{d_{\Omega}}{\delta(x)} \right) \delta(x)^{\gamma}.
	\end{split}
	\end{equation*}
	This implies the desired upper bound.
\end{proof}
\subsection{Existence and uniqueness results}
\noindent For $\epsilon>0$, consider the following approximating problem
\begin{equation}\label{sing:approx} \tag{$P_s^\epsilon$} \left\{
\begin{aligned}
\mathbb{L} u & =\frac{1}{(u_\epsilon+\epsilon)^q}\quad &&\text{ in }  \Omega, \\
u &>0 &&\text{ in } \Omega, \\
u & = 0 && \text { on } \partial\Omega \text{ or in } \Omega^c \text{ if applicable},
\end{aligned}
\right.
\end{equation}
where $q>0$.
\begin{proposition}\label{pro:app}
	Assume \eqref{L1}--\eqref{L4} and \eqref{G1}--\eqref{G3} hold. Then for any $\epsilon>0$, there exists a unique weak-dual solution $u_\epsilon \in L^1_0(\Omega, \delta^\gamma) \cap L^\infty(\Omega)$ of problem \eqref{sing:approx} in the sense that
	\begin{equation}\label{notion:approx}
	\int_{\Omega} u_\epsilon \xi ~dx = \int_{\Omega} \frac{1}{(u_\epsilon+\epsilon)^q} \mathbb{G}^\Omega[\xi] ~dx, \quad \forall \ \xi \in
	\delta^\gamma L^\infty(\Omega).
	\end{equation}
	The solution is represented by
	$$u_\epsilon= \mathbb{G}^\Omega\left[\frac{1}{(u_\epsilon+\epsilon)^q}\right] \quad \text{a.e. in } \Omega. $$
	Moreover, the mapping $\epsilon \mapsto u_\epsilon$ is decreasing.
\end{proposition}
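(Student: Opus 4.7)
The plan is to produce $u_\epsilon$ by Schauder's fixed point theorem applied to the nonlinear Green operator $T(v) := \mathbb{G}^{\Omega}[(v+\epsilon)^{-q}]$, and then derive uniqueness and monotonicity in $\epsilon$ from the Kato-type inequality \eqref{kato+} of Proposition \ref{Kato}. The truncation by $\epsilon$ makes the source uniformly bounded, so the problem effectively becomes an equation with bounded right-hand side, well suited for fixed-point methods.

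\textbf{Existence.} Set $M := \epsilon^{-q}\|\mathbb{G}^{\Omega}[1]\|_{L^{\infty}(\Omega)}$, which is finite by Lemma \ref{est:green} with $\beta=0$. Consider the closed convex bounded subset $K := \{v \in C(\overline{\Omega}) : 0 \leq v \leq M\}$ of the Banach space $C(\overline{\Omega})$. For $v \in K$ one has $0 \leq (v+\epsilon)^{-q} \leq \epsilon^{-q}$, so $T(v) \leq M$ and, by Lemma \ref{Hopf}, $T(v)>0$ in $\Omega$; boundary (or exterior) vanishing of $T(v)$ comes from the bound $T(v) \leq \epsilon^{-q} \mathbb{G}^{\Omega}[1] \lesssim \delta^{\min(\gamma,2s)}$ of Lemma \ref{est:green}. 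Continuity of $T(v)$ up to $\overline{\Omega}$ follows from \eqref{G3} together with a standard near/far splitting of the Green integral using \eqref{G-est} (small balls around the singularity contribute $\lesssim r^{2s}$ uniformly in $v$; on the complement, the pointwise convergence from \eqref{G3} allows dominated convergence). Hence $T(K) \subset K$. Continuity of $T$ on $K$ is immediate since $v \mapsto (v+\epsilon)^{-q}$ is Lipschitz of constant $q\epsilon^{-q-1}$ on $[0,\infty)$ and $\mathbb{G}^{\Omega}$ is bounded from $L^{\infty}(\Omega)$ into $L^{\infty}(\Omega)$.

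The main technical point is relative compactness of $T(K)$ in $C(\overline{\Omega})$. The family $\{T(v)\}_{v \in K}$ is uniformly bounded in sup norm; interior equicontinuity follows by the same near/far splitting as above, with modulus depending only on $\|(v+\epsilon)^{-q}\|_{L^{\infty}} \leq \epsilon^{-q}$; equicontinuity at the boundary is inherited from the pointwise control $|T(v)(x)| \leq C \delta(x)^{\min(\gamma,2s)}$. Arzel\`a--Ascoli then yields compactness, and Schauder's theorem produces a fixed point $u_{\epsilon} \in K$ satisfying $u_{\epsilon}=\mathbb{G}^{\Omega}[(u_{\epsilon}+\epsilon)^{-q}]$ a.e.\ in $\Omega$. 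The integration-by-parts formula of Lemma \ref{IBP} then turns this pointwise identity into the weak-dual formulation \eqref{notion:approx}, and $u_{\epsilon} \in L^{1}_{0}(\Omega,\delta^{\gamma}) \cap L^{\infty}(\Omega)$ automatically.

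\textbf{Uniqueness and monotonicity.} If $u_{\epsilon}$ and $\tilde u_{\epsilon}$ are two weak-dual solutions, their difference solves $u_{\epsilon} - \tilde u_{\epsilon} = \mathbb{G}^{\Omega}[f]$ with $f := (u_{\epsilon}+\epsilon)^{-q} - (\tilde u_{\epsilon}+\epsilon)^{-q} \in L^{\infty}(\Omega) \subset L^{1}(\Omega,\delta^{\gamma})$. Applying \eqref{kato+} with any admissible $\xi \geq 0$, the integrand $\sign^{+}(u_{\epsilon}-\tilde u_{\epsilon})\, f$ is pointwise non-positive because $t \mapsto (t+\epsilon)^{-q}$ is strictly decreasing, which forces $(u_{\epsilon}-\tilde u_{\epsilon})^{+} \equiv 0$; by symmetry $u_{\epsilon}=\tilde u_{\epsilon}$. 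For $0<\epsilon_{1}<\epsilon_{2}$, the same Kato argument applied to $u_{\epsilon_{2}}-u_{\epsilon_{1}}$ with $f=(u_{\epsilon_{2}}+\epsilon_{2})^{-q}-(u_{\epsilon_{1}}+\epsilon_{1})^{-q}$ uses the even stronger sign fact that on $\{u_{\epsilon_{2}}>u_{\epsilon_{1}}\}$ one has $u_{\epsilon_{2}}+\epsilon_{2}>u_{\epsilon_{1}}+\epsilon_{1}$, so $f<0$ there; this gives $(u_{\epsilon_{2}}-u_{\epsilon_{1}})^{+}=0$, i.e.\ the map $\epsilon \mapsto u_{\epsilon}$ is non-increasing.

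\textbf{Main obstacle.} The delicate step is the compactness of $\mathbb{G}^{\Omega}: L^{\infty}(\Omega) \to C(\overline{\Omega})$, which must be extracted purely from the abstract Green-kernel data \eqref{G2}--\eqref{G3}. Once that is in place, the fixed-point existence reduces to routine verification via Lemmas \ref{Hopf} and \ref{est:green}, while uniqueness and monotonicity drop out almost mechanically from the monotone structure of $t \mapsto (t+\epsilon)^{-q}$ combined with Proposition \ref{Kato}.
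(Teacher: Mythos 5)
Your proposal is correct and follows essentially the same route as the paper: a Schauder fixed-point argument for the truncated Green operator, with uniqueness and monotonicity in $\epsilon$ both extracted from the Kato-type inequality \eqref{kato+}. The only cosmetic differences are that you phrase the fixed-point problem directly in $C(\overline\Omega)$ rather than in $L^\infty(\Omega)$ (the paper uses $\varphi\mapsto\mathbb{G}^\Omega[(\varphi^++\epsilon)^{-q}]$ on the ball $\mathscr{D}_\epsilon\subset L^\infty(\Omega)$, which avoids having to restrict to nonnegative functions a priori), and that you establish the required equicontinuity by a near/far splitting of the Green integral together with dominated convergence from \eqref{G2}–\eqref{G3}, whereas the paper obtains the same $L^1$-continuity of $z\mapsto G^\Omega(\cdot,z)$ via Scheff\'e's lemma, combining \eqref{G3} with the cited regularity fact $\mathbb{G}^\Omega[L^\infty(\Omega)]\subset C(\overline\Omega)$. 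These are two organizations of the same estimate and both are valid; yours is marginally more self-contained, the paper's marginally shorter. You also write out the uniqueness for fixed $\epsilon$ explicitly, which the paper leaves implicit in the monotonicity argument.
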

\begin{proof}
	Fix $\epsilon>0$. For any $\varphi \in L^\infty(\Omega)$, in light of  the continuous embedding property of the Green operator $\mathbb{G}^\Omega : L^\infty(\Omega) \to L^\infty(\Omega)$ (see \cite[Theorem 2.1]{AbaGomVaz_2019}) and Lemma \ref{IBP}, the function
	$$ u = \mathbb{G}^\Omega\left[\frac{1}{(\varphi^++\epsilon)^q}\right] \in L^\infty(\Omega),
	$$
	where $\varphi^+= \max\{\varphi,0\}$, is the unique positive weak-dual solution  the following problem
	\begin{equation}\label{sing:approx1}
	 \left\{
	\begin{aligned}
	\mathbb{L} u & =\frac{1}{(\varphi^++\epsilon)^q} \quad &&\text{ in }  \Omega, \\
	u &>0 &&\text{ in } \Omega, \\
	u & = 0 && \text { on } \partial\Omega \text{ or in } \Omega^c \text{ if applicable},
	\end{aligned}
	\right.
	\end{equation}
	 in the sense that
	\begin{equation} \label{wdsol-e} \int_{\Omega} u \xi ~dx = \int_{\Omega} \frac{1}{(\varphi^++\epsilon)^q} \mathbb{G}^\Omega[\xi] ~dx, \quad \forall \ \xi \in
	\delta^\gamma L^\infty(\Omega).
	\end{equation}
	Define the solution map $\mathbb{T}_{\epsilon}:  L^\infty(\Omega) \to  L^\infty(\Omega)$ as
	$$ \mathbb{T}_{\epsilon}(\varphi):= \mathbb{G}^\Omega\left[\frac{1}{(\varphi^++\epsilon)^q}\right], \quad \varphi \in L^\infty(\Omega).$$
	We will use the Schauder fixed point theorem to show that $\mathbb{T}_{\epsilon}$ admits a fixed point which is a weak-dual solution of \eqref{sing:approx1}.
	
	For any $\varphi \in L^\infty(\Omega)$ and $x \in \Omega$, we obtain
	\begin{equation*}
	\left|\mathbb{G}^\Omega\left[\frac{1}{(\varphi^++\epsilon)^q}\right] (x)\right| \leq \epsilon^{-q}\mathbb{G}^\Omega[1](x) \leq \epsilon^{-q}\| \mathbb{G}^\Omega[1] \|_{L^\infty(\Omega)}=:R_\epsilon,
	\end{equation*}
	and thus,
	\begin{equation}\label{eq:invar}
	\|\mathbb{T}_{\epsilon}(\varphi)\|_{L^\infty(\Omega)} \leq R_\epsilon.
	\end{equation}
	Put
	$$ \mathscr{D}_\epsilon:= \{\varphi \in L^\infty(\Omega): \|\varphi\|_{L^\infty(\Omega)} \leq R_\epsilon\}$$
	then $\mathscr{D}_{\epsilon}$ is a closed, convex subset of $L^\infty(\Omega)$. Moreover, by \eqref{eq:invar}, $\mathbb{T}_{\epsilon}(\mathscr{D}_{\epsilon}) \subset \mathscr{D}_{\epsilon}$. \medskip
	
\noindent \textbf{Claim 1:} $\mathbb{T}_{\epsilon}$ is continuous.

Indeed, let $\{\varphi_k\} \subset L^\infty(\Omega)$ such that $\varphi_k \to \varphi$ in $L^\infty(\Omega)$. Put $u_k=\mathbb{T}_{\epsilon}(\varphi_k)$ and  $u=\mathbb{T}_{\epsilon}(\varphi)$.
	For any $x \in \Omega$, we have
	\begin{equation*}
	\begin{split}
	 |u_k(x)-u(x)|&=  \left| \mathbb{G}^\Omega\left[\frac{1}{(\varphi_k^++\epsilon)^q}\right] (x)- \mathbb{G}^\Omega\left[\frac{1}{(\varphi^++\epsilon)^q}\right] (x)\right| \\
	&=  \int_\Omega G^{\Omega}(x,y) \left|\frac{1}{(\varphi_k^++\epsilon)^q}- \frac{1}{(\varphi^++\epsilon)^q} \right| ~dy \\
	&\leq \frac{q}{ \epsilon^{q+1}}\int_\Omega G^{\Omega}(x,y) \left|\varphi_k^+(y)-\varphi^+(y)\right| ~dy \\
	&\leq \frac{q}{ \epsilon^{q+1}} \mathbb{G}^{\Omega}\left[|\varphi_k^+-\varphi^+|\right](x)\\
	&\leq \frac{q}{ \epsilon^{q+1}}\| \mathbb{G}^{\Omega}[1] \|_{L^\infty(\Omega)} \|\varphi_k - \varphi \|_{L^\infty(\Omega)}.
	\end{split}
	\end{equation*}
	Therefore
	\begin{equation} \label{est:wkw} \| u_k - u \|_{L^\infty(\Omega)} \leq \frac{q}{ \epsilon^{q+1}} \| \mathbb{G}^{\Omega}[1] \|_{L^\infty(\Omega)} \|\varphi_k - \varphi \|_{L^\infty(\Omega)}.
	\end{equation}
	Consequently, since $\varphi_k \to \varphi$ in $L^\infty(\Omega)$, by letting $k \to \infty$ in \eqref{est:wkw}, we derive that $u_k \to u$ in $L^\infty(\Omega)$. Thus $\mathbb{T}_{\epsilon}$ is continuous. \medskip
	
\noindent \textbf{Claim 2:} $\mathbb{T}_{\epsilon}(\mathscr{D}_{\epsilon})$ is relatively compact.

Indeed, by Arzela-Ascoli theorem and \eqref{eq:invar}, it is enough to show that the set $\mathbb{T}_{\epsilon}(\mathscr{D}_{\epsilon})$ is equicontinuous, namely for any $x \in \Omega$ and $\mu >0$, there exists $\nu>0$ such that if for every $y \in B(x,\nu)$ and $u \in \mathbb{T}_{\epsilon}(\mathscr{D}_{\epsilon})$ there holds
	$$ |u(x) - u(y)| < \mu.
	$$
	To this end, let $x \in \Omega$ and $\mu >0$. Using the fact that $\mathbb{G}^{\Omega}[L^\infty(\Omega)] \subset C(\overline{\Omega})$ is continuous (\cite[Theorem 2.10]{AbaGomVaz_2019}),  we get $\mathbb{G}^{\Omega}[1] \in C(\overline{\Omega})$. This implies
	$$ \lim_{y \to x}\int_{\Omega}G^{\Omega}(y,z)dz = \int_{\Omega}G^{\Omega}(x,z)dz.
	$$
	This, together with assumption \eqref{G3}, implies that
	$$ \lim_{y \to x}\int_{\Omega}|G^{\Omega}(x,z)-G^{\Omega}(y,z)|dz = 0.
	$$
	Therefore there exists $\nu>0$ such that if $y \in B(x,\nu)$ then
	$$ \int_{\Omega}|G^{\Omega}(x,z)-G^{\Omega}(y,z)|dz < \mu \epsilon^q.
	$$
	Now take $u \in \mathbb{T}_{\epsilon}(\mathscr{D}_{\epsilon})$, then there exists $\varphi \in \mathscr{D}_\epsilon$ such that $u=\mathbb{T}_{\epsilon}(\varphi)$. For $y \in B(x,\nu)$, we have
	\begin{equation*}
	\begin{split}
	|u(x)-u(y)|&=  \left| \mathbb{G}^\Omega\left[\frac{1}{(\varphi^++\epsilon)^q}\right] (x)- \mathbb{G}^\Omega\left[\frac{1}{(\varphi^++\epsilon)^q}\right] (y)\right| \\
	& \leq \frac{1}{\epsilon^q}  \int_\Omega |G^{\Omega}(x,z)- G^{\Omega}(y,z)|dz  \leq  \mu.
	\end{split}
	\end{equation*}
	Therefore $\mathbb{T}_{\epsilon}(\mathscr{D}_{\epsilon})$ is equicontinuous. By Arzela-Ascoli theorem, $\mathbb{T}_{\epsilon}(\mathscr{D}_{\epsilon})$ is relatively compact. Therefore, we have proved Claim 2.

	We infer from the Schauder fixed point theorem that there exists a fixed point $u_{\epsilon} \in \mathscr{D}_{\epsilon}$ of $\mathbb{T}_{\epsilon}$, namely
	$$ 0< u_\epsilon = \mathbb{T}_{\epsilon}(u_\epsilon) = \mathbb{G}^\Omega\left[\frac{1}{(u_\epsilon^++\epsilon)^q}\right] = \mathbb{G}^\Omega\left[\frac{1}{(u_\epsilon+\epsilon)^q}\right].
	$$
	From Lemma \ref{IBP}, we see that  $u_\epsilon$ is a weak-dual solution of \eqref{sing:approx1} in the sense of \eqref{wdsol-e}.
	
	Finally, we will show that $\epsilon \mapsto u_\epsilon$ is decreasing. To this end, let $ \epsilon > \epsilon'$ and denote by $u_{\epsilon}$ and $u_{\epsilon'}$ are the corresponding weak-dual solutions of \eqref{sing:approx}.  For any $\xi \in C_c^\infty(\Omega)$, $\xi \geq 0$, $\mathbb{G}^\Omega[\xi] > 0 $,  from Kato's inequality (see, Lemma \ref{Kato}), we obtain
	\begin{equation}
	0 \leq \int_\Omega (u_{\epsilon}- u_{\epsilon'})^+ \xi ~dx \leq \int_{ \{ u_{\epsilon} \geq u_{\epsilon'} \} } \left( \frac{1}{(u_{\epsilon}+\epsilon)^q}- \frac{1}{(u_{\epsilon'}+\epsilon')^q} \right) \mathbb{G}^\Omega[\xi]  ~dx \leq 0,
	\end{equation}
	which  implies that $(u_{\epsilon} - u_{\epsilon'})^+ = 0$ in $\Omega$, whence $u_{\epsilon} \leq u_{\epsilon'}$ in $\Omega$.
\end{proof}

Let $\varphi_1$ and $\lambda_1$ are the first eigenfunction and eigenvalue for the operator $\mathbb{L}$, then by \cite[Proposition 3.7]{ChaGomVaz_2020} and \cite[Proposition 5.3, 5.4]{BonFigVaz_2018}, we have
\begin{equation}\label{est:eigen}
\varphi_1 \asymp \delta^\gamma \ \text{and} \ \varphi_1= \lambda_1 \mathbb{G}^\Omega[\varphi_1] \quad \text{in} \ \Omega
\end{equation}
\begin{theorem}\label{thm:pure:sing:exist}
Assume \eqref{L1}--\eqref{L4} and \eqref{G1}--\eqref{G3} hold and $(q, \gamma) \in \mathbb{E}$. Then there exists a unique weak-dual solution $u_*$ of the problem \eqref{sing:pure} in the sense of definition \ref{def:weak-dual}.
\end{theorem}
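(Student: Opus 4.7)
The plan is to construct $u_*$ as the monotone decreasing limit of the approximate solutions $\{u_\epsilon\}_{\epsilon > 0}$ provided by Proposition \ref{pro:app}, to secure $\epsilon$-uniform two-sided pointwise bounds tailored to each of the three regimes encoded in $\mathbb{E}$, and then to pass to the limit. Uniqueness will follow from Kato's inequality. Since $\epsilon \mapsto u_\epsilon$ is decreasing, the pointwise limit $u_*(x) := \lim_{\epsilon \to 0^+} u_\epsilon(x)$ is well defined, with $u_* \leq u_{\epsilon_0} \in L^\infty(\Omega)$ for any fixed $\epsilon_0 > 0$. A uniform lower bound $u_\epsilon \geq c_0 \delta^\gamma$ for all $\epsilon \in (0,\epsilon_0]$ follows from Lemma \ref{Hopf} applied to $u_{\epsilon_0} = \mathbb{G}^\Omega[(u_{\epsilon_0}+\epsilon_0)^{-q}]$, the $L^\infty$-bound on $u_{\epsilon_0}$ giving a uniform positive lower bound on the integrand $(u_{\epsilon_0}+\epsilon_0)^{-q}$; in particular $u_* \geq c_0 \delta^\gamma$ a.e.

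The main technical obstacle is an $\epsilon$-uniform upper bound $u_\epsilon \leq C h_*$ in each regime, where $h_*$ is the anticipated profile of the solution. When $q \in (0, q^*_{s,\gamma})$, combining the lower bound $u_\epsilon \geq c_0\delta^\gamma$ with Lemma \ref{est:green} (which applies because $q\gamma < 2s - \gamma$) gives directly
\[
u_\epsilon = \mathbb{G}^\Omega\bigl[(u_\epsilon+\epsilon)^{-q}\bigr] \leq c_0^{-q}\, \mathbb{G}^\Omega[\delta^{-q\gamma}] \lesssim \delta^\gamma.
\]
When $q \in (q^*_{s,\gamma}, q^{**}_{s,\gamma})$, one constructs a super-solution of the regularized equation of the form $A\bigl((\delta+\epsilon^{1/\alpha})^\alpha - \epsilon\bigr)^+$ with $\alpha = 2s/(q+1)$ and conjugate exponent $\beta = 2sq/(q+1)$, using the refined kernel estimate of Lemma \ref{lem:greenest}; its hypothesis $\gamma > \alpha$ is precisely $q > q^*_{s,\gamma}$. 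A comparison argument via Kato's inequality \eqref{kato+} then delivers $u_\epsilon \lesssim \delta^\alpha$. The critical case $q = q^*_{s,\gamma}$ is handled analogously by replacing the barrier with a logarithmically corrected version and closing the bootstrap through Lemmas \ref{lem:greest1}--\ref{lem:greest2}. The subtle point in the strong case is that the chosen super-solution depends on $\epsilon$ and must degenerate to a non-trivial profile as $\epsilon \to 0$.

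With the two-sided bounds in hand and the estimate $|\mathbb{G}^\Omega[\xi]| \lesssim \delta^\gamma$ from Lemma \ref{est:green}, we pass to the limit in
\[
\int_\Omega u_\epsilon\, \xi\, dx = \int_\Omega (u_\epsilon + \epsilon)^{-q}\, \mathbb{G}^\Omega[\xi]\, dx, \qquad \xi \in \delta^\gamma L^\infty(\Omega),
\]
by the dominated convergence theorem: the right-hand integrand is dominated by a multiple of $h_*^{-q}\delta^\gamma$, whose $L^1(\Omega)$ integrability in each regime is exactly the content of the definition of $\mathbb{E}$ (giving $q\gamma < \gamma+1$ in the weak/critical regime via $\gamma \geq s - \tfrac{1}{2}$, and $\beta < \gamma+1$ in the strong regime via $q < q^{**}_{s,\gamma}$). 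This produces a weak-dual solution $u_*$ of \eqref{sing:pure}.

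Uniqueness is then immediate from Proposition \ref{Kato}: any two weak-dual solutions $u_1, u_2$ satisfy, by Lemma \ref{IBP}, $u_i = \mathbb{G}^\Omega[u_i^{-q}]$ a.e., whence $u_1 - u_2 = \mathbb{G}^\Omega[u_1^{-q} - u_2^{-q}]$ with source $u_1^{-q} - u_2^{-q} \in L^1(\Omega, \delta^\gamma)$. Applying \eqref{kato+} with any $\xi \in \delta^\gamma L^\infty(\Omega)$, $\xi \geq 0$, $\mathbb{G}^\Omega[\xi] \geq 0$, and using the strict monotonicity of $t \mapsto t^{-q}$, we get
\[
\int_\Omega (u_1 - u_2)^+ \xi\, dx \leq \int_{\{u_1 > u_2\}} (u_1^{-q} - u_2^{-q})\, \mathbb{G}^\Omega[\xi]\, dx \leq 0,
\]
which forces $u_1 \leq u_2$ a.e. Swapping the roles of $u_1$ and $u_2$ completes the proof.
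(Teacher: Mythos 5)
Your overall skeleton (decreasing approximating family, pointwise monotone limit, Kato's inequality for both comparison and uniqueness) matches the paper, and the uniqueness step is correct. The upper bounds you propose to get by supersolution comparison are a plausible alternative to the paper's bootstrap, which instead first constructs a \emph{subsolution} of the form $\underline u_\epsilon = c_\eta\, \mathbb{G}^\Omega\bigl[(\delta+\epsilon^{1/\alpha})^{-\beta}\bigr]$ (Lemma \ref{lem:greenest}), derives the sharp \emph{lower} bound $u_\epsilon \geq c_5(\delta+\epsilon^{1/\alpha})^\alpha - c_6\epsilon$ via Kato, and only then obtains the upper bound by feeding that lower bound back into the fixed-point identity $u_\epsilon = \mathbb{G}^\Omega\bigl[(u_\epsilon+\epsilon)^{-q}\bigr]$.

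However, there is a genuine gap in the strong regime $q \in (q^*_{s,\gamma}, q^{**}_{s,\gamma})$. There $\alpha = 2s/(q+1) < \gamma$, so the Hopf-type lower bound $u_\epsilon \geq c_0\delta^\gamma$ is strictly \emph{weaker} than the profile $\delta^\alpha$. Your dominated convergence step explicitly claims the right-hand integrand is dominated by $h_*^{-q}\delta^\gamma = \delta^{-\beta}\delta^\gamma$ with $h_* = \delta^\alpha$, but this domination is only available if you have the sharp lower bound $u_\epsilon \gtrsim \delta^\alpha$, which you never establish — you only cite Hopf. Using $u_\epsilon \gtrsim \delta^\gamma$ instead gives the dominating function $\delta^{-q\gamma}\delta^\gamma = \delta^{\gamma(1-q)}$, which is not integrable once $q \geq 1 + \tfrac{1}{\gamma}$; and since $q^{**}_{s,\gamma} > 1 + \tfrac{1}{\gamma}$ whenever $\gamma > s - \tfrac12$, this genuinely occurs inside $\mathbb{E}$. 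The same missing lower bound is needed to verify $u_*^{-q} \in L^1(\Omega,\delta^\gamma)$, which is part of Definition \ref{def:weak-dual}. In other words, the supersolution only caps $u_\epsilon$ from above; it cannot stand in for the subsolution step that the paper uses to push $u_\epsilon$ up to the profile $\delta^\alpha$. (One could sidestep the domination by noting that $(u_\epsilon+\epsilon)^{-q}$ is monotone increasing as $\epsilon \downarrow 0$ and using monotone convergence together with the bounded left-hand side and $\xi = \varphi_1$ to deduce the required integrability, but that is not what you wrote, and as stated the DCT step does not close.) A secondary remark: the candidate supersolution $A\bigl((\delta+\epsilon^{1/\alpha})^\alpha - \epsilon\bigr)^+$ is not itself a Green potential of an explicit density, so one cannot directly compute $\mathbb{L}$ of it; the object one can actually compare against is $A\,\mathbb{G}^\Omega\bigl[(\delta+\epsilon^{1/\alpha})^{-\beta}\bigr]$, whose profile is controlled two-sidedly by Lemma \ref{lem:greenest} and Lemma \ref{est:green}.
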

\begin{proof}
	Let $u_1$ be the weak-dual solution of $(P_s^1)$, then $u_1$ can be represented as
	\begin{equation} \label{u1} u_1= \mathbb{G}^\Omega\left[\frac{1}{(u_1+1)^q}\right] \quad \text{in } \Omega.
	\end{equation}
	On the one hand, we see that
	$$ 0< u_1 \leq \mathbb{G}^\Omega[1] \quad \text{in } \Omega.
	$$
	On the other hand, for $x \in \Omega$,
	\begin{equation}\label{est:appr:solu:small:lower}
	\begin{split}
	\varphi_1(x) &= \lambda_1 \mathbb{G}^\Omega[\varphi_1](x) = \lambda_1 \mathbb{G}^\Omega\left[\varphi_1 \frac{(\|u_1\|_{L^\infty(\Omega)}+1)^q}{(\|u_1\|_{L^\infty(\Omega)}+1)^q}\right](x)\\
	& \leq \lambda_1 \|\varphi_1\|_{L^\infty(\Omega)} (\|u_1\|_{L^\infty(\Omega)}+1)^q \mathbb{G}^\Omega\left[ (\|u_1\|_{L^\infty(\Omega)}+1)^{-q}\right](x)\\
	& \leq C \mathbb{G}^\Omega\left[ \frac{1}{(u_1+1)^q}\right](x) = C u_1(x),
	\end{split}
	\end{equation}
	where $C$ depends on $N,s,\gamma,\Omega,q,\lambda_1, \phi_1$.
	
	For $0<\epsilon<1$, let $u_\epsilon$ be the weak-dual solution of \eqref{sing:approx}, then $u_\epsilon$ is represented as
	$$u_\epsilon(x)= \mathbb{G}^\Omega\left[\frac{1}{(u_\epsilon+\epsilon)^q}\right](x) = \int_{\Omega}  \frac{G^{\Omega}(x,y)}{(u_\epsilon(y)+\epsilon)^q} ~dy, \quad x \in \Omega. $$
	Combining the fact that $\epsilon \mapsto u_\epsilon$ is decreasing, estimate \eqref{est:eigen} and \eqref{est:appr:solu:small:lower}, we deduce that, for $\epsilon<1$,
	\begin{equation} \label{compare-ep1} u_\epsilon \geq u_1 \geq C\varphi_1 \geq C\delta^\gamma \quad \text{in } \Omega,
	\end{equation}
	where the constant $C$ is independent of $\epsilon$.
	
	To prove the upper boundary behavior, we consider the following cases. \medskip
	
\noindent\textbf{Case 1:} $0< q < q^{*}_{s,\gamma}$. Using estimate \eqref{compare-ep1} and Lemma \ref{est:green}, we get
	\begin{equation}\label{est:appr:solu:small:upper}
	\begin{split}
	u_\epsilon & \lesssim \mathbb{G}^{\Omega}\left[\frac{1}{(\delta^{\gamma}+\epsilon)^q}\right]
	 \leq  \mathbb{G}^{\Omega}\left[\frac{1}{\delta^{\gamma q}}\right] \lesssim \delta^\gamma \quad \text{if} \ q < \min\left\{q^{*}_{s,\gamma}, 1+ \frac{1}{\gamma}\right\}.
	\end{split}
	\end{equation}
	Combining \eqref{compare-ep1}-\eqref{est:appr:solu:small:upper}, there exist $C_1, C_2>0$ such that for any $0<\epsilon<1$,
	\begin{equation}\label{est:appr:solu:small:both}
	C_1 \delta^\gamma \leq u_\epsilon \leq C_2 \delta^\gamma  \quad \text{if} \  q < \min\left\{q^{*}_{s,\gamma}, 1+ \frac{1}{\gamma}\right\}.
	\end{equation}
We note that if $\gamma \geq s- \frac{1}{2}$ then $q^{*}_{s,\gamma} \leq 1 + \frac{1}{\gamma}$, hence the condition on $q$ becomes $q < q^{*}_{s,\gamma}$.
	
\noindent	\textbf{Case 2: } $q \in (q^{*}_{s,\gamma}, q^{\ast \ast}_{s, \gamma})$. Put $\epsilon_1= \epsilon^{\frac{1}{\alpha}}>0$. Let $w_{\epsilon_1}$ be the weak-dual solution of
	\begin{equation} \label{Pew}
	\tag{$P_w^\epsilon$} \left\{
	\begin{aligned}
	\mathbb{L} w_{\epsilon_1} & =\frac{1}{(\delta+\epsilon_1)^\beta} \quad &&\text{ in }  \Omega, \\
	w_{\epsilon_1} &>0 &&\text{ in } \Omega, \\
	w_{\epsilon_1} & = 0 && \text { on } \partial\Omega \text{ or in } \Omega^c \text{ if applicable}.
	\end{aligned}
	\right.
	\end{equation}
	 Using Lemma \ref{est:green} with $\beta$ and $\alpha$ are defined in \eqref{alphabeta}, we obtain
	\begin{equation}\label{est:weight:approx1}
	\begin{split}
	w_{\epsilon_1}= \mathbb{G}^\Omega\left[\frac{1}{(\delta+\epsilon_1)^\beta}\right] \leq \mathbb{G}^\Omega\left[\frac{1}{\delta^\beta}\right] \leq C \delta^{\alpha} \quad \text{in }  \Omega \quad \text{if} \ \gamma > \max\left\{ \alpha, s-\frac{1}{2}, \beta-1 \right\}.
	\end{split}
	\end{equation}
On the other hand for $0< \eta <1$, Lemma \ref{lem:greenest} gives
	\begin{equation}\label{est:weight:approx2}
	\begin{split}
	w_{\epsilon_1}&(x)= \mathbb{G}^\Omega\left[\frac{1}{(\delta+\epsilon_1)^\beta}\right](x) \\
	&
	\begin{aligned}
	\geq  \left\{
	\begin{array}{ll}
	\displaystyle{\mathbb{G}^{\Omega}\left[\frac{1}{(\delta+\epsilon_1)^\beta} {\bf 1}_{\Omega_{\eta}}\right](x)} \geq  c_1 \left(\frac{1}{2}(\delta(x)+ \epsilon_1)^\alpha - \epsilon \right) &  \text{ if } x \in \Omega_{\frac{\eta}{2}},\\
	c_2 \ (\delta(x)+ \epsilon_1)^\alpha - \epsilon &  \text{ if } x \in \Omega \setminus \Omega_{\frac{\eta}{2}}, \\
	\end{array}
	\right.
	\end{aligned}\\
	& \geq \min\{c_1,c_2\} \left( \frac{1}{2}(\delta(x)+ \epsilon_1)^\alpha - \epsilon\right),   \quad x \in \Omega \quad \text{if} \ \gamma > \max\left\{ \alpha, s-\frac{1}{2}, \beta-1 \right\}.\\
	\end{split}
	\end{equation}
	Combining \eqref{est:weight:approx1}-\eqref{est:weight:approx2}, there exist constants $c_3, c_4>0$ depending upon $\eta, \alpha,s$ (but independent of $\epsilon_1$) such that
	\begin{equation}\label{est:weight:approx3}
	c_3 \left(\frac{1}{2}(\delta(x)+ \epsilon_1)^\alpha - \epsilon\right)   \leq w_{\epsilon_1}(x) \leq c_4 \delta(x)^\alpha, \ x \in \Omega \quad \text{if} \ \gamma > \max\left\{ \alpha, s-\frac{1}{2}, \beta-1 \right\}.
	\end{equation}
	
\noindent Define
	\begin{equation}
	\underline u_{\epsilon } := c_\eta w_{\epsilon_1} \ \text{with} \ 0< c_\eta < \frac{C_1}{c_4} \left(\frac{\eta}{2}\right)^{\gamma-\alpha},
	\end{equation}
	where $C_1, c_4$ are the constants defined in \eqref{est:appr:solu:small:both} and \eqref{est:weight:approx3} respectively. We note that $c_\eta$ is independent of $\epsilon$ such that
	$$ \underline u_{\epsilon } \leq u_\epsilon \ \text{in } \Omega \setminus \Omega_{\frac{\eta}{2}}
	$$
	and
	$$
	\underline u_{\epsilon } + \epsilon \leq 2 c_4 c_\eta (\delta + \epsilon_1)^\alpha + (1-c_4 c_\eta) \epsilon \quad \text{ in } \Omega.
	$$
	
\noindent If $2 c_4 c_\eta (\delta(x)+ \epsilon_1)^\alpha \geq (1-c_4 c_\eta) \epsilon$ then by choosing $\eta$ small enough such that $c_\eta < (4c_4)^{-\frac{q}{q+1}}$, we have
\[ \left(\underline u_{\epsilon } + \epsilon\right)^{-q} \geq (4c_4 c_\eta)^{-q} (\delta+ \epsilon_1)^{-\alpha q} > c_\eta  (\delta+ \epsilon_1)^{-\alpha q} = c_\eta  (\delta+ \epsilon_1)^{-\beta}= \mathbb{L} \underline u_{\epsilon } \quad  \text{in} \ \Omega.
\]
If $2c_4 c_\eta (\delta(x)+ \epsilon_1)^\alpha \leq (1-c_4 c_\eta) \epsilon$ then again by choosing $\eta$ small enough such that $c_4 c_\eta <1$ and $c_\eta < (2(1-c_4 c_\eta))^{-q} $, we have
\[
\begin{split}
    \left(\underline u_{\epsilon } + \epsilon\right)^{-q} \geq (2(1-c_4 c_\eta))^{-q} \epsilon^{-q} & \geq (2(1-c_4 c_\eta))^{-q}  (\delta+ \epsilon_1)^{-\alpha q} \\
    & > c_\eta  (\delta + \epsilon_1)^{-\alpha q} = c_\eta  (\delta + \epsilon_1)^{-\beta} = \mathbb{L} \underline u_{\epsilon } \quad  \text{in} \ \Omega.
\end{split}
\]
Combining the above cases, we obtain
$$ \mathbb{L} \underline u_\epsilon \leq \frac{1}{(\underline u_\epsilon+\epsilon)^q} \quad \text{in } \Omega.
$$
Recalling that $u_\epsilon$ is the weak-dual solution of \eqref{sing:approx}. By applying the Kato type inequality \eqref{kato+}, we get $\underline u_{\epsilon} \leq u_\epsilon$ in $\Omega$, namely there exist constants $0 < c_5, c_6 < \frac{1}{2}$ (by taking $\eta$ small enough) such that
	\begin{equation}\label{est:appr:solu:large:lower}
	c_5 (\delta + \epsilon_1)^\alpha - c_6 \epsilon \leq u_\epsilon \quad \text{in } \Omega.
	\end{equation}
Using the integral representation of the solution $u_\epsilon$ and \eqref{est:weight:approx1} with $\beta = \alpha q = \frac{2s q}{q+1}$, we obtain the upper bound as follows
	\begin{equation}\label{est:appr:solu:large:upper}
	\begin{split}
	u_\epsilon &=\mathbb{G}^{\Omega}\left[ \frac{1}{(u_\epsilon+1)^q} \right] \leq  \mathbb{G}^{\Omega}\left[\frac{1}{(c_5 (\delta+ \epsilon_1)^\alpha +  (1- c_6) \epsilon)^q}\right] \\
	& \leq  \mathbb{G}^{\Omega}\left[\frac{1}{\delta^{\alpha q}}\right] = \mathbb{G}^{\Omega}\left[\frac{1}{\delta^{\beta}}\right] \lesssim  \delta^\alpha \quad \text{if} \ \gamma > \max\left\{ \alpha, s-\frac{1}{2}, \beta-1 \right\}.
	\end{split}
\end{equation}
The above condition breakdown the range of $q$ as
	\[ q \in \left(q^{*}_{s,\gamma}, \infty\right) \ \text{if} \ 2s-1 \leq \gamma
	\]
	and
	\[q \in \left(q^{*}_{s,\gamma}, \frac{\gamma+1}{2s-\gamma -1}\right) \ \text{if} \ \gamma <2s-1.\]
\textbf{Case 3:} $q = q^{*}_{s,\gamma}$. In this case, by using \eqref{est:appr:solu:small:lower} and Lemma \ref{est:green}, we get
\begin{equation}\label{est:appr:solu:upper}
	\begin{aligned}
	u_\epsilon = \mathbb{G}^{\Omega}\left[\frac{1}{(u_\epsilon+ \epsilon)^q}\right] &\leq C  \mathbb{G}^{\Omega}\left[\frac{1}{(\delta^\gamma+\epsilon)^q}\right] \\
	&\leq C   \mathbb{G}^{\Omega}\left[\frac{1}{\delta^{\gamma q}}\right] \leq C  \delta^\gamma  \ln\left(\frac{d_{\Omega}}{\delta}\right) \quad \text{if} \ q = q^{*}_{s,\gamma} < 1+ \frac{1}{\gamma}.
	\end{aligned}
\end{equation}
The above condition $q^\ast_{s, \gamma} < 1+ \frac{1}{\gamma}$ follows from the assumption $\gamma > s-\frac{1}{2}.$

Define $u_*:= \lim_{\epsilon \to 0} u_\epsilon$. Then, by using uniform  boundary behavior in $\epsilon$ and Lebesgue dominated convergence theorem, we pass to the limits in \eqref{notion:approx} and obtain
	\begin{equation}
	\int_{\Omega} u_* \xi ~dx = \int_{\Omega} \frac{1}{u_*^q} \mathbb{G}^\Omega[\xi] ~dx \quad \forall \ \xi \in \delta^\gamma L^\infty(\Omega).
	\end{equation}
	Again, the uniqueness of the weak-dual solution from Kato type inequality \eqref{kato+}. The proof is complete.
\end{proof}
\subsection{Boundary behavior of weak-dual solutions}
\begin{theorem}\label{thm:pure:sing:boundbeha}
Assume \eqref{L1}--\eqref{L4} and \eqref{G1}--\eqref{G3} hold and $(q, \gamma) \in \mathbb{E}$. Let $u_*$ be the weak-dual solution of problem \eqref{sing:pure} obtained in Theorem \ref{thm:pure:sing:exist}. Then $u_* \in \mathcal{A}_{\ast}(\Omega)$ where $\mathcal{A}_{\ast}(\Omega)$ is defined in \eqref{puresing:bdrybeh}.
\end{theorem}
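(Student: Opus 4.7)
The plan is to pass the uniform-in-$\epsilon$ two-sided boundary bounds on the approximate solutions $u_{\epsilon}$ already derived inside the proof of Theorem~\ref{thm:pure:sing:exist} to the pointwise monotone limit $u_{*}=\lim_{\epsilon\to 0^{+}}u_{\epsilon}$, splitting the analysis into the three regimes that define $\mathcal{A}_{*}(\Omega)$. The whole argument rests on the sharp Green-kernel estimates of Lemmas~\ref{lem:greest1}--\ref{lem:greest2} together with the Kato type inequality of Proposition~\ref{Kato}.

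For the weakly singular regime $q\in(0,q^{*}_{s,\gamma})$, inequality \eqref{est:appr:solu:small:both} already supplies a uniform sandwich $C_{1}\delta^{\gamma}\le u_{\epsilon}\le C_{2}\delta^{\gamma}$; letting $\epsilon\to 0^{+}$ delivers $u_{*}\asymp\delta^{\gamma}$, so $u_{*}\in\mathcal{A}^{-}(\Omega)$. For the strongly singular regime $q\in(q^{*}_{s,\gamma},q^{**}_{s,\gamma})$, combining \eqref{est:appr:solu:large:lower} with \eqref{est:appr:solu:large:upper} gives $c_{5}(\delta+\epsilon^{1/\alpha})^{\alpha}-c_{6}\epsilon\le u_{\epsilon}\le C\delta^{\alpha}$ with $\alpha=2s/(q+1)$, uniformly in $\epsilon$; passing to the limit produces $u_{*}\asymp\delta^{\alpha}$, whence $u_{*}\in\mathcal{A}^{+}(\Omega)$.

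The only regime requiring new work is the critical one $q=q^{*}_{s,\gamma}$. The uniform, but non-sharp, sandwich $\delta^{\gamma}\lesssim u_{\epsilon}\lesssim \delta^{\gamma}\ln(d_{\Omega}/\delta)$ inherited from the existence proof already gives $u_{*}\in\mathcal{A}^{0}(\Omega)$ when $s-\tfrac{1}{2}<\gamma<s$. For $s\le\gamma<2s$ the goal sharpens to $u_{*}\asymp \delta^{\gamma}\ln^{\gamma/(2s)}(d_{\Omega}/\delta)$, and I would set $\sigma_{*}:=\gamma/(2s)=1/(q+1)\in[\tfrac{1}{2},1)$ and try the two-parameter ansatz $\underline u:=c\,\delta^{\gamma}\ln^{\sigma_{*}}(d_{\Omega}/\delta)$ and $\overline u:=C\,\delta^{\gamma}\ln^{\sigma_{*}}(d_{\Omega}/\delta)$. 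Since $\overline u^{-q}\asymp C^{-q}\delta^{\gamma-2s}\ln^{-(1-\sigma_{*})}(d_{\Omega}/\delta)$ and $1-\sigma_{*}\in(0,\tfrac{1}{2}]\subset[0,1)$, Lemmas~\ref{lem:greest1} and~\ref{lem:greest2} apply on $\Omega_{\eta/2}$ and $\Omega\setminus\Omega_{\eta/2}$ respectively and yield $\mathbb{G}^{\Omega}[\overline u^{-q}]\asymp C^{-q}\delta^{\gamma}\ln^{\sigma_{*}}(d_{\Omega}/\delta)$ on all of $\Omega$. Choosing $C$ large and $c$ small (depending only on the two constants in these asymptotics) turns $\overline u$ and $\underline u$ into weak-dual super- and sub-solutions of \eqref{sing:pure}, and the sandwich $\underline u\le u_{*}\le \overline u$ then follows from Proposition~\ref{Kato} applied verbatim as in the monotonicity step used in the proof of Proposition~\ref{pro:app}.

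The principal obstacle I foresee lies at the endpoint $\gamma=s$ (where $q=1$): a naive bootstrap that alternately refines the log-exponents of upper and lower estimates via the recursion $\sigma_{n+1}=1-q\tau_{n}$, $\tau_{n+1}=1-q\sigma_{n}$ has contraction factor $q^{2}=1$, so the sequences do not converge to the sharp exponent $\sigma_{*}=1/2$. The direct sub/super-solution construction sidesteps this by exploiting the matching two-sided constants of Lemmas~\ref{lem:greest1}--\ref{lem:greest2}, which is precisely why a single ansatz pins down the log-exponent for the whole range $s\le\gamma<2s$ at once.
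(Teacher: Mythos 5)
Your proposal is correct, but it handles the sharp critical case $q=q^{*}_{s,\gamma}$, $s\le\gamma<2s$ by a route that is genuinely different from (and more unified than) the paper's. The paper splits that regime in two: for $s<\gamma<2s$ (where $q<1$) it runs an iteration scheme, starting from the crude upper bound $u_*\lesssim\delta^\gamma\ln(d_\Omega/\delta)$, alternately applying Kato's inequality against the barriers $\psi_\sigma$ of \eqref{phisigma} and refining the logarithmic exponent through a geometric series with ratio $-q$, which converges to $\gamma/(2s)$ precisely because $q<1$; for $\gamma=s$ (where $q=1$ and that series stalls) it switches to a direct comparison of $u_*$ against constant multiples of the single barrier $\psi_{1/2}$ via Kato. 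Your one-shot barrier argument with $\psi_{1-\sigma_*}$, $\sigma_*=\gamma/(2s)=1/(q+1)$, reproduces the paper's $\gamma=s$ trick and observes — correctly — that nothing about it requires $q=1$: the key identities $q\gamma=2s-\gamma$ and $q\sigma_*=1-\sigma_*$ make $\mathbb{G}^\Omega$ map the profile $\delta^\gamma\ln^{\sigma_*}(d_\Omega/\delta)$ (raised to the $-q$) back onto itself up to constants, and since $1-\sigma_*\in(0,\tfrac12]$ Lemmas~\ref{lem:greest1}--\ref{lem:greest2} hold uniformly. Choosing $A=M^{q/(q+1)}$ and $a=M^{-q/(q+1)}$ (with $M$ the constant in \eqref{est:grecric}) makes both Kato sign conditions close in a single step, for all $q\in(0,1]$. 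This buys a shorter, uniform proof and exposes the algebraic reason the exponent $\gamma/(2s)$ is the fixed point; the paper's iteration, by contrast, makes it explicit how any admissible starting bound self-improves to the sharp one.

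One precision worth flagging: the literal ansatz $\overline u=C\delta^\gamma\ln^{\sigma_*}(d_\Omega/\delta)$ is not itself of the form $\mathbb{G}^\Omega[f]$, so Proposition~\ref{Kato} does not apply to $u_*-\overline u$ as written. The barriers must be taken to be $A\psi_{1-\sigma_*}$ and $a\psi_{1-\sigma_*}$ (which are Green potentials by construction), and the asymptotic equivalence $\psi_{1-\sigma_*}\asymp\delta^\gamma\ln^{\sigma_*}(d_\Omega/\delta)$ from \eqref{est:grecric} is then used only to verify the sign of the Kato integrand on $\{u_*\ge A\psi_{1-\sigma_*}\}$ (resp.\ $\{a\psi_{1-\sigma_*}\ge u_*\}$). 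This is exactly the form the paper uses for $\gamma=s$, and once you phrase the comparison that way your argument goes through verbatim for the whole range $s\le\gamma<2s$.
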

\begin{proof}
	Let $u_*$ be the weak-dual solution of the problem \eqref{sing:pure}. Using \eqref{est:appr:solu:small:both} for $q<q^{*}_{s,\gamma}$ and  \eqref{est:appr:solu:large:lower}, \eqref{est:appr:solu:large:upper} for $q>q^{*}_{s,\gamma}$ and \eqref{est:appr:solu:small:lower},  \eqref{est:appr:solu:upper} for $q=q^{*}_{s,\gamma}$ and $\gamma<s$, we get $u_* \in \mathcal{A}_{\ast}(\Omega)$. Now, it remains to prove the optimal boundary behavior when $q= q^{*}_{s,\gamma}$ and $ s \leq \gamma < 2s$. \smallskip
	
	\textbf{Case 1:} $ s < \gamma < 2s$.
	For $\sigma \in (0,1)$, define
	\begin{equation} \label{phisigma}
	\begin{aligned}
	\psi_\sigma(x):=  \left\{
	\begin{array}{ll}
	\displaystyle{\mathbb{G}^\Omega\left[\frac{1}{\delta^{2s-\gamma}}\ln^{-\sigma}\left(\frac{d_{\Omega}}{\delta}\right) \right](x)} &  \text{ if } x \in \Omega, \\
	0
	& \text{ if } x \in \partial\Omega\, \ \mbox{or $\Omega^c$ if applicable}.\\
	\end{array}
	\right.
	\end{aligned}
	\end{equation}
	Since $\frac{1}{\delta^{2s-\gamma}}\ln^{-\sigma}(\frac{d_{\Omega}}{\delta})  \in L^1(\Omega, \delta^\gamma)$, by using Lemma \ref{IBP}, Lemma \ref{lem:greest1} and Lemma \ref{lem:greest2}, we obtain $\psi_\sigma \in L^1(\Omega, \delta^
	\gamma)$,
	\begin{equation}\label{est:grecric}
	\frac{1}{M}  \delta(x)^{\gamma} \ln^{1-\sigma}\left(\frac{d_{\Omega}}{\delta(x)}\right) \leq  \psi_\sigma(x) \leq M \delta(x)^{\gamma} \ln^{1-\sigma}\left(\frac{d_{\Omega}}{\delta(x)}\right), \ x \in \Omega,
	\end{equation}
	where $M$ is independent of the parameter $\sigma \in (0, \sigma_0)$ for some $0< \sigma_0 <1$ and
	\begin{align*} \int_{\Omega} \psi_\sigma \xi ~dx &= \int_{\Omega} \mathbb{G}^\Omega\left[ \frac{1}{\delta^{2s-\gamma}}\ln^{-\sigma}\left(\frac{d_{\Omega}}{\delta}\right) \right] \xi ~dx \\
	&= \int_{\Omega} \frac{1}{\delta^{2s-\gamma}}\ln^{-\sigma}\left(\frac{d_{\Omega}}{\delta}\right) \mathbb{G}^\Omega[\xi]  ~dx, \quad \forall \ \xi \in  \delta^\gamma L^\infty(\Omega).
	\end{align*}
	Let $u_*$ be the weak-dual solution of \eqref{sing:pure}. For $\sigma=q \in (0,1)$, define $\underline \psi_\sigma:= C^{-q} \psi_\sigma$ where $C$ is defined in \eqref{est:appr:solu:upper}. Then using the upper bound of $u$ in \eqref{est:appr:solu:upper}, we get
	\begin{equation}
	\mathbb{L} \underline \psi_\sigma = \frac{1}{C^q} \frac{1}{\delta^{2s-\gamma} \ln^{q}\left(\frac{d_{\Omega}}{\delta}\right)} \leq \frac{1}{u_*^q} = \mathbb{L} u_*.
	\end{equation}
	From \eqref{est:grecric} and Kato's inequality, we infer that
	\begin{equation}\label{est:grecric1}
	\frac{1}{M C^q}  \delta(x)^{\gamma} \ln^{1-q}\left(\frac{d_{\Omega}}{\delta(x)}\right) \leq C^{-q} \psi_\sigma(x) \leq u_*(x), \ \ x \in \Omega.
	\end{equation}
	Define $\overline \psi_{\sigma_1}:= M^q C^{q^2} \psi_{\sigma_1}$  for $\sigma_1= q(1-q) \in (0,1)$ and using \eqref{est:grecric1} and the fact that $q \gamma = 2s - \gamma$, we obtain
	\begin{equation}
	\mathbb{L} u_* = \frac{1}{u_*^q} \leq \frac{M^q C^{q^2}}{ \delta^{2s-\gamma} \ln^{q(1-q)}\left(\frac{d_{\Omega}}{\delta}\right)} = \mathbb{L} \overline \psi_{\sigma_1}.
	\end{equation}
	Then by again using \eqref{est:grecric} and the Kato inequality, we get
	\begin{equation*}
	u_* \leq  M^q C^{q^2} \psi_{\sigma_1} \leq M^{q+1} C^{q^2}  \delta^{\gamma} \ln^{1-q+q^2}\left(\frac{d_{\Omega}}{\delta}\right) \quad \text{in } \Omega.
	\end{equation*}
	Now, by iterating theses estimates, we get for any $m \in \mathbb{N}$,
	\begin{equation*}
	\begin{split}
	\frac{1}{M^{1+q+q^2 + \dots + q^{2m-2}} C^{q^{2m-1}}} &\delta^{\gamma} \ln^{1-q+q^2+ \dots - q^{2m+1}}\left(\frac{d_{\Omega}}{\delta}\right)  \leq u_*\\
	&\leq M^{1+q+q^2 + \dots + q^{2m-1}} C^{q^{2m}} \delta^{\gamma} \ln^{1-q+q^2 + \dots + q^{2m+2}}\left(\frac{d_{\Omega}}{\delta}\right) \quad \text{in } \Omega.
	\end{split}
	\end{equation*}
	Passing $m \to \infty$ and recalling that $q= q^{*}_{s,\gamma} \in (0,1)$, we obtain
	\begin{equation}
	\frac{1}{M^{\frac{\gamma}{2 \gamma-2s}}} \delta^\gamma  \ln^{\frac{\gamma}{2s}}\left(\frac{d_{\Omega}}{\delta}\right) \leq u_* \leq M^{\frac{\gamma}{2 \gamma-2s}} \delta^\gamma \ln^{\frac{\gamma}{2s}}\left(\frac{d_{\Omega}}{\delta}\right) \quad \text{in } \Omega.
	\end{equation}
	
	\textbf{Case 2:} $\gamma=s$. In this case, $q=q_{s,\gamma}^*=1$ and hence, by taking $\sigma= \frac{1}{2}$ in \eqref{phisigma}, we derive that $\psi_{\frac{1}{2}}$ satisfies
\begin{equation} \label{psi1/2} \psi_{\frac{1}{2}} = \mathbb{G}^{\Omega} \left[ \frac{1}{\delta^\gamma \ln^{\frac{1}{2}}(\frac{d_{\Omega}}{\delta})} \right] \quad \text{in } \Omega.
\end{equation}
Next, by taking $\sigma=\frac{1}{2}$ and using \eqref{est:grecric}, we have
\begin{equation}\label{est:1/2}
	\frac{1}{M}  \delta(x)^{\gamma} \ln^{\frac{1}{2}}\left(\frac{d_{\Omega}}{\delta(x)}\right) \leq  \psi_{\frac{1}{2}}(x) \leq M \delta(x)^{\gamma} \ln^{\frac{1}{2}}\left(\frac{d_{\Omega}}{\delta(x)}\right), \ x \in \Omega.
\end{equation}
Recall that
\begin{equation} \label{1/u} u_*= \mathbb{G}^{\Omega} \left[\frac{1}{u_*}\right] \quad \text{in } \Omega.
\end{equation}
Combining \eqref{psi1/2}--\eqref{1/u} and applying Kato type inequality \eqref{kato+} for the functions $\frac{u_*}{M}-\psi_{\frac{1}{2}}$ and $\psi_{\frac{1}{2}} - Mu_*$ successively, we obtain
	$$ \frac{1}{M^2}  \delta(x)^{\gamma} \ln^{\frac{1}{2}}\left(\frac{d_{\Omega}}{\delta(x)}\right) \leq  \frac{1}{M}\psi_{\frac{1}{2}} \leq u_*(x) \leq M \psi_{\frac{1}{2}} \leq M^2 \delta(x)^{\gamma} \ln^{\frac{1}{2}}\left(\frac{d_{\Omega}}{\delta(x)}\right), \ x \in \Omega.$$
The proof is complete.
\end{proof}

\begin{proof}[\textbf{Proof of Theorem \ref{thm:pure:sing} and \ref{thm:pure:sing:reg}}] Combining the results in Theorem \ref{thm:pure:sing:exist} and Theorem \ref{thm:pure:sing:boundbeha}, we complete our proof.
\end{proof}

\begin{remark}
	Under the same assumption Theorem \ref{thm:pure:sing} and $f \in \delta^\gamma L^\infty(\Omega)$, $f \geq 0$ and with minor changes in the proof of Lemma \ref{est:green}, Proposition \ref{pro:app}, Theorem \ref{thm:pure:sing:exist} and Theorem \ref{thm:pure:sing:boundbeha}, we can prove the existence of a unique weak dual solution $ u \in \mathcal{A}_{\ast}(\Omega)$ of the perturbed problem
	\begin{equation}\label{sing:puref}
	 \left\{
	\begin{aligned}
	\mathbb{L} u
	&=\frac{1}{u^q} + f \quad &&\text{in } \Omega, \\
	u &> 0 &&\text{in } \Omega, \\
	u & = 0 &&\text{on } \partial\Omega \text{ or in } \Omega \text{ if applicable}.
	\end{aligned}
	\right.
	\end{equation}
\end{remark}
\section{Applications: Semilinear problems involving singular nonlinearities}\label{source:absorption}
In this section, we study the semilinear elliptic problems in the presence of singular nonlinearities and a source or absorption term via sub-super solution method.
\subsection{Source term} Let $u_*$ be the solution of \eqref{sing:pure} and put $v_0=u_*$. To study the problem \eqref{sing:f(v)}, first we prove the existence results for the following parameterized iterative scheme with parameter $\lambda$ and $\kappa$
	\begin{equation}\label{eq:iter} \tag{$Q_n$}
	\left\{
	\begin{aligned}
	\mathbb{L} v_n + \kappa v_n
	& =\frac{1}{v_n^q} + \lambda f(v_{n-1}) + \kappa v_{n-1} \quad &&\text{ in } \Omega, \\
	v_n & > 0 \quad &&\text{ in } \Omega, \\
	v_n & = 0 && \text{ on } \partial\Omega \text{ or in } \Omega^c \text{ if applicable},
	\end{aligned}
	\right.
	\end{equation}
for $n \geq 1$, via Schauder fixed point theorem which is required in the proof of Theorem \ref{thm:source}. The weak-dual solution of \eqref{eq:iter} is understood in the sense that
	\begin{equation} \label{notion:iter}
	\int_{\Omega} v_n \xi ~dx +  \kappa\int_{\Omega} v_n \mathbb{G}^\Omega[\xi] ~dx = \int_{\Omega} \left( \frac{1}{v_n^q} + \lambda f(v_{n-1}) + \kappa v_{n-1} \right) \mathbb{G}^\Omega[\xi] ~dx, \,\forall \, \xi \in \delta^\gamma L^\infty(\Omega).
	\end{equation}
\begin{proposition}\label{pro:iter:exis:1}
Assume \eqref{L1}--\eqref{L4}, \eqref{G1}--\eqref{G3}, \eqref{f1} hold  and $(q, \gamma) \in \mathbb{E}$. Then for any $n \in \mathbb{N}$, there exists a weak-dual solution $v_n \in \mathcal{A}_{\ast}(\Omega)$ of the iterative scheme \eqref{eq:iter}.
\end{proposition}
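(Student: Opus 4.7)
The plan is to carry out strong induction on $n$, reusing the approximation-plus-Schauder machinery developed for the purely singular problem in Proposition \ref{pro:app} and Theorem \ref{thm:pure:sing:exist}. The base case $n=0$ is immediate from Theorem \ref{thm:pure:sing:reg}: $v_0 = u_* \in \mathcal{A}_{\ast}(\Omega)$. For the inductive step I assume $v_{n-1} \in \mathcal{A}_{\ast}(\Omega) \subset L^\infty(\Omega)$; since $f$ is a bounded-domain map by \eqref{f1}, the perturbation $h_{n-1} := \lambda f(v_{n-1}) + \kappa v_{n-1}$ lies in $L^\infty(\Omega)$ and is non-negative, so solving \eqref{eq:iter} reduces to finding a positive weak-dual solution of $(\mathbb{L} + \kappa I) v = v^{-q} + h_{n-1}$ in $\Omega$ with the homogeneous boundary/exterior Dirichlet condition.

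For $\epsilon \in (0,1)$ I regularize the singular nonlinearity to $(v+\epsilon)^{-q}$. Using Lemma \ref{IBP}, the corresponding weak-dual identity is equivalent to the pointwise equation $(I + \kappa \mathbb{G}^\Omega) v = \mathbb{G}^\Omega[(v+\epsilon)^{-q} + h_{n-1}]$. Since $\mathbb{G}^\Omega$ is positive and self-adjoint on $L^2(\Omega)$, the operator $I + \kappa \mathbb{G}^\Omega$ is invertible and the shifted Green operator $\mathbb{G}_\kappa^\Omega := (I + \kappa \mathbb{G}^\Omega)^{-1} \mathbb{G}^\Omega$ is positivity preserving with $0 \le \mathbb{G}_\kappa^\Omega[g] \le \mathbb{G}^\Omega[g]$ for $g \ge 0$; via the identity $\mathbb{G}_\kappa^\Omega[g] = \mathbb{G}^\Omega[g - \kappa \mathbb{G}_\kappa^\Omega[g]]$ it maps $L^\infty(\Omega)$ into $C(\overline{\Omega})$ by \cite[Theorem 2.10]{AbaGomVaz_2019}. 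I then set
\[
\mathcal{T}_\epsilon(\varphi) := \mathbb{G}_\kappa^\Omega\bigl[(\varphi^+ + \epsilon)^{-q} + h_{n-1}\bigr], \quad \varphi \in L^\infty(\Omega),
\]
and repeat the three steps of Proposition \ref{pro:app} verbatim---ball invariance, continuity, and equicontinuity via \eqref{G3}---to produce a fixed point $v_n^\epsilon$ by Schauder's theorem; this is a weak-dual solution of the approximating problem.

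It remains to let $\epsilon \to 0$. Monotonicity of $\epsilon \mapsto v_n^\epsilon$ and uniqueness at fixed $\epsilon$ follow from Proposition \ref{Kato} applied to the difference of two solutions, exactly as in Proposition \ref{pro:app}. For the $\epsilon$-uniform two-sided bounds in $\mathcal{A}_{\ast}(\Omega)$, I would adapt the barriers built in Theorem \ref{thm:pure:sing:exist}. Because $h_{n-1}$ is bounded, $\mathbb{G}^\Omega[h_{n-1}] \lesssim \delta^\gamma$ by \cite[Theorem 3.4]{AbaGomVaz_2019}, so the extra source contributes the correct order near $\partial\Omega$ in the weakly singular and critical regimes, and strictly lower order than the singular part (using $\gamma > \alpha = \frac{2s}{q+1}$) in the strongly singular regime. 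Combining the Kato-type inequality against $u_\epsilon$ from below with a slightly enlarged multiple of the supersolution coming from \eqref{est:weight:approx3} from above yields $v_n^\epsilon \in \mathcal{A}_{\ast}(\Omega)$ uniformly in $\epsilon$, after which dominated convergence in \eqref{notion:iter} gives the sought weak-dual solution $v_n \in \mathcal{A}_{\ast}(\Omega)$. The main technical obstacle is checking that the non-negative source $h_{n-1}$ can indeed be absorbed into an enlarged barrier in the strongly singular regime; this should follow by a calibration argument analogous to the choice of $c_\eta$ in Theorem \ref{thm:pure:sing:exist} Case 2, exploiting the strict margin $\gamma > \alpha$.
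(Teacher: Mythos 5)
Your plan is correct in outline and reaches the same conclusion, but it takes a genuinely different route from the paper. The paper never forms the resolvent $\mathbb{G}_\kappa^\Omega=(I+\kappa\mathbb{G}^\Omega)^{-1}\mathbb{G}^\Omega$. Instead it keeps the unperturbed Green operator and moves the $\kappa v$ term to the right-hand side, protected by the cutoff $v^+\mathbf{1}_{A_\epsilon(v)}$ that truncates $v^+$ to a region where it is a priori bounded by $M_\epsilon\|\mathbb{G}^\Omega[1]\|_{L^\infty(\Omega)}$. This keeps the full source inside $\mathbb{G}^\Omega$ uniformly bounded above \emph{and below}, so the invariant ball $\mathscr{E}_\epsilon$, continuity, and Arzela--Ascoli all run with no new operator theory; once a fixed point lands in $\mathscr{E}_\epsilon$ the cutoff is trivially $\equiv 1$ and drops out. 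Your route, by contrast, quietly relies on facts about $\mathbb{G}_\kappa^\Omega$ that the paper never establishes: that the $L^2$-inverse of $I+\kappa\mathbb{G}^\Omega$ carries $\mathbb{G}^\Omega[L^\infty]$ back into $L^\infty$ (which does not follow from the spectral theorem alone), that $\mathbb{G}_\kappa^\Omega$ is positivity preserving (itself an application of the Kato inequality to $v=\mathbb{G}^\Omega[g-\kappa v]$), and that $\mathbb{G}_\kappa^\Omega$ inherits enough kernel-type regularity from $G^\Omega$ to run the equicontinuity step via \eqref{G3}. Each of these is true and can be pushed through --- positivity plus $v+\kappa\mathbb{G}^\Omega v=\mathbb{G}^\Omega g$ gives $0\le\mathbb{G}_\kappa^\Omega[g]\le\mathbb{G}^\Omega[g]$ for $g\ge 0$, hence the $L^\infty$ bound, and your identity $\mathbb{G}_\kappa^\Omega[g]=\mathbb{G}^\Omega[g-\kappa\mathbb{G}_\kappa^\Omega[g]]$ does yield equicontinuity since the bracket is uniformly bounded --- but each is real additional work that the indicator-function trick is designed to sidestep. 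Your approach does buy one simplification: since $(\varphi^++\epsilon)^{-q}+h_{n-1}>0$ and $\mathbb{G}_\kappa^\Omega$ is positivity preserving, the invariant ball can be one-sided, whereas the paper needs a two-sided ball because $\mathbb{S}_\epsilon$ can produce negative values. Your barrier argument at the end matches the paper's: they squeeze $v_{\epsilon,1}$ between $u_\epsilon$ (using $h_{n-1}\ge 0$ and Kato) and $V_\epsilon=u_\epsilon+M\mathbb{G}^\Omega[1]$, and since $\mathbb{G}^\Omega[1]\asymp\delta^\gamma$ is of the correct or lower order in every regime ($\gamma>\alpha$ in the strongly singular case), $V_\epsilon$ carries the same boundary profile as $u_\epsilon$, which is exactly the calibration you sketch. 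So: correct idea, different mechanism, with a few unfilled technical obligations on $\mathbb{G}_\kappa^\Omega$ that you should flag explicitly rather than claim the Schauder steps go through ``verbatim.''
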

\begin{proof}
	Let $n=1$, $\epsilon>0$ and $\kappa>1$. Let $u_*$ be the solution of \eqref{sing:pure}. Put
	$$ M_\epsilon := \frac{1}{\varepsilon^q} + \| \lambda f(u_*) + \kappa u_* \|_{L^\infty(\Omega)}.
	$$
	For any function $v \in L^\infty(\Omega)$, put
	$$ A_{\epsilon}(v):=\{  x \in \Omega: - \kappa M_\epsilon \| \mathbb{G}^\Omega[1] \|_{L^\infty(\Omega)}^2 \leq v(x) \leq M_\varepsilon \| \mathbb{G}^\Omega[1] \|_{L^\infty(\Omega)}  \}.
	$$
	For any function $v \in L^\infty(\Omega)$,  there exists a unique weak-dual solution $\tilde v$ of the problem
	\begin{equation*}
	(P_i^\epsilon)^1 \left\{
	\begin{alignedat}{2}
	{} \mathbb{L} \tilde v + \kappa v^+{\bf 1}_{A_\epsilon(v)}
	& {}= \frac{1}{(v^++\epsilon)^q}  + \lambda f(v_0) + \kappa v_0,
	&& \quad\mbox{ in } \, \Omega, \\
	\tilde v & {}= 0
	&& \quad\mbox{ on } \partial\Omega\, \ \mbox{or in $\Omega^c$ if applicable}.
	\end{alignedat}
	\right.
	\end{equation*}
	The solution $\tilde v$ is represented as
	$$ \tilde v = \mathbb{G}^\Omega\left[\frac{1}{(v^+ +\epsilon)^q}  + \lambda f(v_0) + \kappa (v_0- v^+{\bf 1}_{A_\epsilon(v)} ) \right] \in L^\infty(\Omega).
	$$
Now, we define the solution map $\mathbb{S}_\epsilon:  L^\infty(\Omega) \to  L^\infty(\Omega)$ as
	$$ \mathbb{S}_\epsilon(v):= \mathbb{G}^\Omega\left[\frac{1}{(v^++\epsilon)^q}  + \lambda f(v_0) + \kappa (v_0- v^+{\bf 1}_{A_\epsilon(v)} ) \right], \quad v \in L^\infty(\Omega).$$
	On one hand, we see that
	\begin{equation} \label{T_1-bound1}
	\mathbb{S}_\epsilon(v)  \leq M_\epsilon \| \mathbb{G}^\Omega[1] \|_{L^\infty(\Omega)} \quad \text{for a. e. } x \in \Omega.
	\end{equation}
	On the other hand, we obtain
	\begin{equation} \label{T_1-bound2}
	\mathbb{S}_\epsilon(v) \geq - \kappa \mathbb{G}^\Omega[v^+{\bf 1}_{A_\epsilon(v)} ] \geq - \kappa M_\epsilon \| \mathbb{G}^\Omega[1] \|_{L^\infty(\Omega)}^2 \quad \text{for a.e. } x \in \Omega.
	\end{equation}
	Combining \eqref{T_1-bound1} and \eqref{T_1-bound2} implies
	$$ - \kappa M_\epsilon \| \mathbb{G}^\Omega[1] \|_{L^\infty(\Omega)}^2 \leq \mathbb{S}_\epsilon(v)(x) \leq M_\epsilon \| \mathbb{G}^\Omega[1] \|_{L^\infty(\Omega)} \quad \text{for a.e. } x \in \Omega.
	$$
	Put
	$$ \mathscr{E}_{\epsilon}:=\{  v \in L^\infty(\Omega): - \kappa M_\epsilon \| \mathbb{G}^\Omega[1] \|_{L^\infty(\Omega)}^2 \leq v(x) \leq  M_\epsilon \| \mathbb{G}^\Omega[1] \|_{L^\infty(\Omega)} \text{ for a.e. } x \in \Omega \}.
	$$
	Then $\mathscr{E}_{\epsilon}$ is a closed, convex subset of $L^\infty(\Omega)$ and $\mathbb{S}_\epsilon(\mathscr{E}_{\epsilon}) \subset \mathscr{E}_{\epsilon}$.\medskip
	
\noindent \textbf{Claim 1:} $\mathbb{S}$ is continuous.

Indeed, let $\{v_k\} \subset \mathscr{E}_{\epsilon}$ such that $v_k \to v$ in $L^\infty(\Omega)$. Put $\tilde v_k=\mathbb{S}_\epsilon(v_k)$ and $\tilde v=\mathbb{S}_\epsilon(v)$. Since $v_k \in \mathscr{E}_\epsilon$, it follows that $v \in \mathscr{E}_\epsilon$.  Moreover ${\bf 1}_{A_\epsilon(v_k)}= {\bf 1}_{A_\epsilon(v)} = 1$ a.e. in $\Omega$.
Then
	\begin{align*}
	|\tilde v_k- \tilde v| &\leq \mathbb{G}^\Omega \left[ \left| \frac{1}{(v_k^++\epsilon)^q}  - \frac{1}{(v^++\epsilon)^q} \right| \right] + \kappa \mathbb{G}^\Omega[|v_k^+- v^+|] \\
	&\leq \left(\frac{q}{\epsilon^{q+1}} +\kappa \right) \mathbb{G}^\Omega[|v_k^+-v^+|] \\
	&\leq \left(\frac{q}{\epsilon^{q+1}} +\kappa \right)\| v_k - v  \|_{L^\infty(\Omega)} \| \mathbb{G}^\Omega[1] \|_{L^\infty(\Omega)}.
	\end{align*}
	This implies
	$$ \|\tilde v_k - \tilde v  \|_{L^\infty(\Omega)} \leq \left(\frac{q}{\epsilon^{q+1}} +\kappa \right)\| v_k - v  \|_{L^\infty(\Omega)} \| \mathbb{G}^\Omega[1] \|_{L^\infty(\Omega)}.
	$$
	Since $v_k \to v$ in $L^\infty(\Omega)$, by letting $k \to \infty$ in the above estimate, we deduce that $\tilde v_k \to \tilde v$ in $L^\infty(\Omega)$. \medskip
	
\noindent \textbf{Claim 2:} $\mathbb{S}_\epsilon(\mathscr{E}_{\epsilon})$ is relatively compact. By Arzela-Ascoli theorem and $\mathbb{S}_\epsilon(\mathscr{E}_{\epsilon}) \subset \mathscr{E}_{\epsilon}$, it is enough to show that the set $\mathbb{S}_\epsilon(\mathscr{E}_{\epsilon})$ is equicontinuous.
	To this end, let $x \in \Omega$ and $\mu >0$. Therefore, there exists $\nu>0$ such that if $y \in B(x,\nu)$ then
	$$ \int_{\Omega}|G^{\Omega}(x,z)-G^{\Omega}(y,z)|dz < \mu.
	$$
	Now take $\tilde v \in \mathbb{S}_\epsilon(\mathscr{E}_{\epsilon})$. For $y \in B(x,\nu)$, then there exists $v \in \mathscr{E}_{\epsilon}$ such that $\tilde v=\mathbb{S}_\epsilon(v)$, we have
	\begin{equation*}
	\begin{aligned}
	&|\tilde v(x)- \tilde v(y)|\\
	&= \left| \mathbb{G}^\Omega\left[\frac{1}{(v^++\epsilon)^q}  + \lambda f(v_0) + \kappa (v_0- v^+{\bf 1}_{A_\epsilon(v)}) \right](x)  \right. \\
	& \quad \quad \left. - \mathbb{G}^\Omega\left[\frac{1}{(v^++\epsilon)^q}  + \lambda f(v_0) + \kappa (v_0- v^+{\bf 1}_{A_\epsilon(v)}) \right](y) \right| \\
	& \leq \left(\frac{1}{\epsilon^q} + \| \lambda f(v_0) + \kappa v_0  \|_{L^\infty(\Omega)} +  M_\epsilon \| \mathbb{G}^\Omega[1] \|_{L^\infty(\Omega)}  \right) \int_\Omega |G^{\Omega}(x,z)- G^{\Omega}(y,z)|dz \\
	&\leq  \left(\frac{1}{\epsilon^q} + \| \lambda f(v_0) + \kappa v_0  \|_{L^\infty(\Omega)} + M_\epsilon \| \mathbb{G}^\Omega[1] \|_{L^\infty(\Omega)} \right)\mu.
	\end{aligned}
	\end{equation*}
	Therefore $\mathbb{S}_\epsilon(\mathscr{E}_{\epsilon})$ is equicontinuous. By the Arzela-Ascoli theorem, $\mathbb{S}_\epsilon(\mathscr{E}_{\epsilon})$ is relatively compact.

	We infer from the Schauder fixed point theorem that there exists a fixed point $v_{\epsilon,1} \in \mathscr{E}_{\epsilon}$ of $\mathbb{S}_\epsilon$, namely
	$$  v_{\epsilon,1} = \mathbb{S}_\epsilon(v_{\epsilon,1}) = \mathbb{G}^\Omega\left[\frac{1}{(v_{\epsilon,1}^++\epsilon)^q}  + \lambda f(v_0) + \kappa (v_0- v_{\epsilon,1}^+) \right].
	$$
	From Lemma \ref{IBP}, we see that  $v_{\epsilon,1}$ is the unique weak-dual solution of
	\begin{equation*}
	\left\{
	\begin{alignedat}{2}
	{} \mathbb{L} v_{\epsilon,1} + \kappa v_{\epsilon,1}^+
	& {}=\frac{1}{(v_{\epsilon,1}^++ \epsilon)^q} + \lambda f(v_{0}) + \kappa v_{0} \
	&& \quad\mbox{ in } \, \Omega, \\
	v_{\epsilon,1} & {}= 0
	&& \quad\mbox{ on } \partial\Omega\, \ \mbox{or in $\Omega^c$ if applicable}.
	\end{alignedat}
	\right.
	\end{equation*}

	To prove the positivity and boundary behavior of the weak-dual solution $v_{\epsilon,1}$, we construct a positive subsolution and supersolution of the above problem. We take $u_\epsilon$, which is the weak-dual solution of \eqref{sing:approx}, as a weak-dual subsolution of the above problem. We see that
	$$ u_\epsilon - v_{\epsilon,1} = \mathbb{G}^\Omega\left[\frac{1}{(u_{\epsilon}+\epsilon)^q}  - \frac{1}{(v_{\epsilon,1}^++\epsilon)^q}  - \lambda f(v_0) - \kappa (v_0- v_{\epsilon,1}^+) \right].
	$$
	By using Kato type inequality \eqref{kato+}, we obtain that, for any $0 \leq \xi \in C_c^\infty(\Omega)$ such that $\mathbb{G}^\Omega[\xi] \geq 0$,
	\begin{align*} \int_{\Omega}(u_\epsilon - v_{\epsilon,1})^+ \xi ~dx &\leq \int_{\{ u_\epsilon \geq v_{\epsilon,1} \} }\left[\frac{1}{(u_{\epsilon}+\epsilon)^q}  - \frac{1}{(v_{\epsilon,1}^++\epsilon)^q}  - \lambda f(v_0) - \kappa (v_0- v_{\epsilon,1}^+) \right] \mathbb{G}^\Omega[\xi]~dx \\
	&\leq 0.
	\end{align*}
The second inequality follows from the fact that $v_0 \geq u_\epsilon$. Therefore $u_\epsilon \leq v_{\epsilon,1}$. In particular $v_{\epsilon,1} \geq 0$. Next, put $V_\epsilon = u_\epsilon + M \mathbb{G}^\Omega[1]$ for some $M>0$ which will be determined later. Then
$$ V_\epsilon = \mathbb{G}^\Omega\left[\frac{1}{(u_{\epsilon}+\epsilon)^q}+M \right].$$
It follows that
	$$ v_{\epsilon,1} - V_{\epsilon} = \mathbb{G}^\Omega\left[  \frac{1}{(v_{\epsilon,1}+\epsilon)^q} - \frac{1}{(u_{\epsilon}+\epsilon)^q} + \lambda f(v_0) + \kappa (v_0- v_{\epsilon,1}) - M \right] .
	$$
	Now by choosing $M = \|\lambda f(v_0) + \kappa v_0\|_{L^\infty(\Omega)}$ and applying the Kato inequality \eqref{kato+}, we obtain
	\begin{equation*}
	\begin{aligned} \int_{\Omega} (&v_{\epsilon,1} - V_{\epsilon})^+ \xi ~dx \\
	&\leq \int_{\{ v_{\epsilon,1} \geq V_{\epsilon} \} }\left[  \frac{1}{(v_{\epsilon,1}+\epsilon)^q} - \frac{1}{(u_{\epsilon}+\epsilon)^q} + \lambda f(v_0) + \kappa (v_0- v_{\epsilon,1}) - M \right] \mathbb{G}^\Omega[\xi]~dx \leq 0.
	\end{aligned}
	\end{equation*}
	Hence $v_{\epsilon,1} \leq V_\epsilon$  in $\Omega$. Next, by using again Kato's inequality, we obtain, for $\epsilon > \epsilon'$,
	\begin{align*} \int_{\Omega}(v_{\epsilon,1} -v_{\epsilon',1})^+ \xi~dx &\leq \int_{\{ v_{\epsilon,1} \geq v_{\epsilon',1} \} }\left[  \frac{1}{(v_{\epsilon,1}+\epsilon)^q} - \frac{1}{(v_{\epsilon',1}+\epsilon')^q} - \kappa (v_{\epsilon,1}- v_{\epsilon',1})  \right] \mathbb{G}^\Omega[\xi]~dx  \\
	&\leq 0.
	\end{align*}
	It follows that $v_{\epsilon,1} \leq v_{\epsilon',1}$. Therefore $\epsilon \mapsto v_{\epsilon,1}$ is decreasing and
	  $0<  v_\epsilon \leq v_{\epsilon,1} \leq V_\epsilon$ in $\Omega$ for every $\epsilon >0$.  Put $v_1:= \lim_{\epsilon \to 0} v_{\epsilon,1}$.
	  We notice that
	  \begin{equation} \label{eq:notion}
	  \int_{\Omega} v_{\epsilon,1} \xi ~dx = \int_{\Omega} \left[\frac{1}{(v_{\epsilon,1}+\epsilon)^q}  + \lambda f(v_0) + \kappa (v_0- v_{\epsilon,1}) \right] \mathbb{G}^\Omega[\xi] ~dx \quad \forall \ \xi \in \delta^\gamma L^\infty(\Omega).
	  \end{equation}
	  By using estimate $u_\epsilon \leq v_{\epsilon,1} \leq V_\epsilon$ in $\Omega$, $u_\epsilon, V_\epsilon \in \mathcal{A}_{\ast}(\Omega)$ and by letting $\epsilon \to 0$, we obtain that $v_1$ satisfies \eqref{eq:iter} for $n=1$ and $v_1 \in \mathcal{A}_{\ast}(\Omega).$ The remaining proof can be done by induction.
\end{proof}
Now, we prove our main result on problem \eqref{sing:f(v)} involving the source term $\lambda f(u)$.

\begin{proof}[\textbf{Proof of Theorem \ref{thm:source}}]
	Let $u_* \in \mathcal{A}_{\ast}(\Omega)$ be the weak-dual solution of \eqref{sing:pure} and put
	$$v^0:= u_* +  U^0$$
	where $U^0= \Lambda \mathbb{G}^\Omega[1]$ is the weak-dual solution $\mathbb{L} U^0 = \Lambda \ \text{in} \ \Omega$. By choosing $\Lambda$ such that  $\Lambda \geq \lambda \|f(u_* + \Lambda \mathbb{G}^\Omega[1])\|_{L^\infty(\Omega)}$ due to assumption \eqref{f2}, we deduce that $u_*$ and $v^0$ are sub weak-dual solution and super weak-dual solution of \eqref{sing:f(v)} respectively.
	
	For $ n \geq 1$,  let $v_n \in \mathcal{A}_{\ast}(\Omega) \cap L^{1}(\Omega, \delta^\gamma)$ be the weak-dual solution of the iterative scheme \eqref{eq:iter}.	The existence of the weak-dual solution $v_n \in \mathcal{A}_{\ast}(\Omega)$ is guaranteed by Proposition \ref{pro:iter:exis:1}. Now, we claim that
	\begin{equation} \label{monotone-un} 0< u_* \leq  v_n \leq v^0 \quad \text{ for all } n \geq 1.
	\end{equation}
	Let $0 \not \equiv \xi \in C_c^\infty(\Omega)$ such that $\xi \geq 0 $ and $ \mathbb{G}^\Omega[\xi] \geq 0 $. Then using the definition of $u_*$ and Kato type inequality \eqref{kato+}, we obtain
	\begin{equation} \label{u1>u0}
	0 \leq \int_\Omega (u_*- v_1)^+ \xi ~dx \leq \int_{ \{u_* \geq v_1\} }  \left( \frac{1}{u_*^q}- \frac{1}{v_1^q}- \lambda f(u_*) + \kappa(v_1-u_*) \right) \mathbb{G}^\Omega[\xi]  ~dx \leq 0.
	\end{equation}
	This implies $(u_*-v_1)^+ =0$ in $\Omega$, whence $u_* \leq v_1$ in $\Omega$. % 	Next we choose $\kappa$ large enough such that the mapping $t \mapsto \kappa t + \lambda f(t)$ is increasing in $[0, \|u^0\|_{L^\infty(\Omega)}]$.
	Then by using assumption \eqref{f3}, Kato type inequality \eqref{kato+}, for any $0 \not \equiv \xi \in C_c^\infty(\Omega)$ and $\xi, \mathbb{G}^\Omega[\xi] \geq 0$, we obtain
	\begin{equation} \label{u1<u^0}
	\begin{split}
	0 &\leq \int_\Omega (v_1- v^0)^+ \xi ~dx
	\leq \int_{ \{ v_1 \geq v^0 \}} \left( \frac{1}{v_1^q}- \frac{1}{u_*^q}+ \lambda f(u_*)- \Lambda + \kappa (u_0-u_1) \right) \mathbb{G}^\Omega[\xi]  ~dx\\
	& \leq \int_{ \{ v_1 \geq v^0 \}}  \left( \frac{1}{v_1^q}- \frac{1}{(v^0)^q}+ \lambda ( f(u_*)-f(v^0)) +  \kappa (u_*-v^0) + \kappa(v^0-v_1) \right) \mathbb{G}^\Omega[\xi]  ~dx \leq 0.
	\end{split}
	\end{equation}
	This gives $(v_1-v^0)^+ = 0$ in $\Omega$, whence $v_1 \leq v^0$ in $\Omega$. By using the argument similar to the one leading to \eqref{u1>u0} and \eqref{u1<u^0} for any $n \in \mathbb{N}$, we obtain \eqref{monotone-un}.
	
	Put $v:= \lim_{n \to \infty} v_n$. By using \eqref{monotone-un} and letting $n \to \infty$ in \eqref{notion:iter}, we obtain $v \in \mathcal{A}_{\ast}(\Omega)$ and
	$$\int_{\Omega} v \xi ~dx = \int_{\Omega} \frac{1}{v^q} \mathbb{G}^\Omega[\xi] ~dx + \lambda \int_{\Omega} f(v) \mathbb{G}^\Omega[\xi] ~dx, \quad \forall \, \xi \in \delta^\gamma L^\infty(\Omega).$$
	This means $v$ is a weak-dual solution of \eqref{sing:f(v)}.
\end{proof}
\subsection{Absorption term}
In this part, we focus on the semilinear elliptic problem \eqref{sing+absorption} involving singular nonlinearities and absorption term. Let $u_* \in \mathcal{A}_{\ast}(\Omega)$ be the solution of \eqref{sing:pure} and put $w_0=u_*$. We start by studying the existence result of the following paramterized iterative scheme involving a positive parameter $\mu$ \begin{equation}\label{eq:iter-abs} \tag{$\widetilde Q_n$}
	\left\{
	\begin{aligned}
	\mathbb{L} w_n + \mu w_n + g(w_{n-1})
	& =\frac{1}{w_n^q} + \mu w_{n-1} \quad &&\text{ in } \Omega, \\
	w_n & > 0 \quad &&\text{ in } \Omega, \\
	w_n & = 0 && \text{ on } \partial\Omega \text{ or in } \Omega^c \text{ if applicable},
	\end{aligned}
	\right.
	\end{equation}
for $n \geq 1$,	where weak-dual solution of \eqref{eq:iter-abs} is understood in the sense that
	\begin{equation}\label{notion:iter:ab}
	\int_{\Omega} w_n \xi ~dx +  \int_{\Omega} (\mu w_n + g(w_{n-1}))\ \mathbb{G}^\Omega[\xi] ~dx = \int_{\Omega} \left( \frac{1}{w_n^q} + \mu w_{n-1} \right) \mathbb{G}^\Omega[\xi] ~dx, \quad \forall \, \xi \in \delta^\gamma L^\infty(\Omega).
	\end{equation}
 \begin{proposition}\label{pro:iter:exis:2} Assume \eqref{L1}--\eqref{L4}, \eqref{G1}--\eqref{G3}, \eqref{g1}, \eqref{g3} hold and $(q, \gamma) \in \mathbb{E}$. Then, for any $n \in \mathbb{N}$, there exists a weak dual solution $w_n \in \mathcal{A}_{\ast}(\Omega)$ of the iterative scheme \eqref{eq:iter-abs}.
\end{proposition}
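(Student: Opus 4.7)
The plan is to proceed by induction on $n$, mimicking the approximation--Schauder scheme from the proof of Proposition \ref{pro:iter:exis:1} with the source $\lambda f(v_{n-1})$ replaced by the absorption $-g(w_{n-1})$. Throughout, $w_0 = u_* \in \mathcal{A}_*(\Omega)$, and we assume inductively that $w_{n-1} \in \mathcal{A}_*(\Omega)$.

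A preliminary step is to establish the integrability $g(w_{n-1}) \in L^1(\Omega, \delta^\gamma)$ together with $\mathbb{G}^\Omega[g(w_{n-1})] \in L^\infty(\Omega) \cap C(\overline{\Omega})$. By \eqref{g1}, $g$ is bounded on compact subsets of $(0,\infty)$, so any singularity comes from the region where $w_{n-1} \to 0$, i.e.\ near $\partial \Omega$. There \eqref{g3} gives $g(w_{n-1}) \lesssim w_{n-1}^{-p}$, which combined with the lower bound on $w_{n-1}$ implicit in $\mathcal{A}_*(\Omega)$ yields $g(w_{n-1}) \lesssim \delta^{-p\theta}$ with $\theta \in \{\gamma,\, 2s/(q+1)\}$ (up to a harmless logarithmic factor in the critical case). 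Since $p < q$ and $(q,\gamma) \in \mathbb{E}$, the exponent $p\theta$ lies strictly below the threshold of Lemma \ref{est:green}, giving the required $L^1(\Omega,\delta^\gamma)$ integrability as well as $L^\infty(\Omega)$ boundedness of the associated Green potential.

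Fix $\epsilon > 0$, replace $w_n^{-q}$ by $(w_n+\epsilon)^{-q}$ in \eqref{eq:iter-abs}, and define the solution operator $\widetilde{\mathbb{S}}_\epsilon : L^\infty(\Omega) \to L^\infty(\Omega)$ by
\[
\widetilde{\mathbb{S}}_\epsilon(v) := \mathbb{G}^\Omega\!\left[\frac{1}{(v^{+}+\epsilon)^{q}} + \mu\, w_{n-1} - g(w_{n-1}) - \mu\, v^{+}\mathbf{1}_{\widetilde{A}_\epsilon(v)}\right],
\]
where $\widetilde{A}_\epsilon(v)$ is a truncation set analogous to $A_\epsilon(v)$ in Proposition \ref{pro:iter:exis:1}. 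Thanks to the preliminary step, each summand produces a Green potential in $L^\infty(\Omega)$. One then checks, essentially verbatim from Claims 1 and 2 in the proof of Proposition \ref{pro:iter:exis:1}, that (i) $\widetilde{\mathbb{S}}_\epsilon$ sends a suitable closed convex bounded subset $\widetilde{\mathscr{E}}_\epsilon \subset L^\infty(\Omega)$ into itself, (ii) $\widetilde{\mathbb{S}}_\epsilon$ is continuous (the $v$-dependence is only through $(v^{+}+\epsilon)^{-q}$ and $v^{+}\mathbf{1}_{\widetilde{A}_\epsilon(v)}$, both Lipschitz on $\widetilde{\mathscr{E}}_\epsilon$), and (iii) the image $\widetilde{\mathbb{S}}_\epsilon(\widetilde{\mathscr{E}}_\epsilon)$ is equicontinuous via \eqref{G3}, hence relatively compact by Arzela--Ascoli. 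Schauder's fixed point theorem then yields $w_{\epsilon,n}$ solving the $\epsilon$-approximation of \eqref{eq:iter-abs}.

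Positivity, monotonicity of $\epsilon \mapsto w_{\epsilon,n}$, and a two-sided sandwich of the form $c_1\,\underline{w}_\epsilon \le w_{\epsilon,n} \le u_\epsilon + M\,\mathbb{G}^\Omega[\mathbf{1}_\Omega]$ (with $\underline{w}_\epsilon$ a positive subsolution built from a negative power of $\delta + \epsilon^{1/\alpha}$ via Lemma \ref{lem:greenest}, and $M \sim \|\mu w_{n-1}\|_{L^\infty(\Omega)} + \|\mathbb{G}^\Omega[g(w_{n-1})]\|_{L^\infty(\Omega)}$) are then obtained by repeated application of the Kato type inequality \eqref{kato+}; the monotonicity hypothesis \eqref{g2} of $t \mapsto \mu t - g(t)$ is decisive to control the sign of the absorption contribution in these Kato comparisons. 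Passing $\epsilon \to 0^{+}$ by monotone/dominated convergence then produces $w_n$ satisfying \eqref{notion:iter:ab}, and propagating the sandwich through the limit together with Lemma \ref{est:green} gives $w_n \in \mathcal{A}_*(\Omega)$. The principal obstacle throughout is the unboundedness of $g(w_{n-1})$ near $\partial\Omega$, which would break the $L^\infty$-based Schauder framework were it not for the gap $p < q$ in \eqref{g3} that keeps $\mathbb{G}^\Omega[g(w_{n-1})]$ in $L^\infty(\Omega)$.
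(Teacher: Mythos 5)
Your overall strategy (Schauder fixed point on the $\epsilon$-regularized scheme, $L^\infty$ control of the Green potential of the absorption term via the gap $p<q$ in \eqref{g3}, Kato comparison to squeeze $w_{\epsilon,n}$ between sub/super solutions with sharp boundary behavior, then $\epsilon \to 0$ and induction) is the same as the paper's, and the preliminary step identifying $\mathbb{G}^\Omega[g(w_{n-1})] \in L^\infty(\Omega)$ as the crux of the Schauder argument is correctly flagged.

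There is, however, a concrete error in the comparison step: you claim that the monotonicity hypothesis \eqref{g2} of $t \mapsto \mu t - g(t)$ is ``decisive'' for the Kato comparisons. But \eqref{g2} is not among the hypotheses of Proposition \ref{pro:iter:exis:2}, and it is not needed there. In the iterative scheme \eqref{eq:iter-abs} the absorption is frozen at $g(w_{n-1})$, so in every Kato comparison performed to establish positivity, $\epsilon$-monotonicity, and the two-sided sandwich for a fixed $n$, the $g(w_{n-1})$ terms either cancel or contribute with a determined sign simply because $g \geq 0$ by \eqref{g1}; no monotonicity of $\mu t - g(t)$ enters. The assumption \eqref{g2} is only required later, in Theorem \ref{thm:absor}, when one compares successive iterates $w_n$ and $w_{n-1}$, which is exactly where $g$ is evaluated at the varying argument. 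Apart from this, your choice of sub/supersolution pair differs from the paper's: they take $\overline w = w_0 = u_*$ (which is cheaper — since $g(w_0) \geq 0$ the absorption term only helps the supersolution inequality, so no additive $M\mathbb{G}^\Omega[\mathbf{1}_\Omega]$ is needed) and $\underline w = k u_{\epsilon_k}$ with $\epsilon_k = \epsilon/k$, rather than the $(\delta+\epsilon^{1/\alpha})^{-\beta}$ profile from Lemma \ref{lem:greenest}; both routes can be made to work, but the paper's is more economical in the absorption setting.
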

\begin{proof}
	Let $n=1$,  $\epsilon>0$ and $\mu>0$.  Let $u_* \in \mathcal{A}_{\ast}(\Omega)$ be the solution of \eqref{sing:pure} and put $w_0=u_*$. Denote
	$$ N_\epsilon := \frac{1}{\varepsilon^q} + \mu \|w_0 \|_{L^\infty(\Omega)}. $$
	For any given function $w \in L^\infty(\Omega)$, set
	$$ B_{\epsilon}(w):=\left\{  x \in \Omega: - \mu N_\epsilon \| \mathbb{G}^\Omega[1] \|_{L^\infty(\Omega)}^2 - \|\mathbb{G}^\Omega[g(w_0)]\|_{L^\infty(\Omega)} \leq w(x) \leq N_\varepsilon \| \mathbb{G}^\Omega[1] \|_{L^\infty(\Omega)}  \right\}.$$
Using assumption \eqref{g1}, \eqref{g3}, Lemma \ref{est:green} and the fact that $w_0 \in \mathcal{A}_{\ast}(\Omega)$, we notice that $g(w_0) \in L^1(\Omega, \delta^\gamma)$ and $\mathbb{G}^\Omega[g(w_0)] \in L^\infty(\Omega).$ Since $\mathbb{G}^\Omega$ maps $L^\infty(\Omega)$ to $L^\infty(\Omega)$ (see, \cite[Theorem 2.1]{AbaGomVaz_2019}), for any $w \in L^\infty(\Omega)$ there exists a unique weak-dual solution $\tilde w$ of the following problem
\begin{equation*}
	\left\{
	\begin{alignedat}{2}
		{} \mathbb{L} \tilde w
		& {}= \frac{1}{(w^++\epsilon)^q}  - g(w_0) + \mu (w_0- w^+ {\bf{1}}_{B_\epsilon(w)}),
		&& \quad\mbox{ in } \, \Omega, \\
		\tilde w & {}= 0
		&& \quad\mbox{ on } \partial\Omega\, \ \mbox{or in $\Omega^c$ if applicable}.
	\end{alignedat}
	\right.
\end{equation*}
The solution $\tilde w$ is represented as
$$\tilde w= \mathbb{G}^\Omega\left[\frac{1}{(w^++\epsilon)^q}  - g(w_0) + \mu (w_0- w^+ {\bf{1}}_{B_\epsilon(w)})\right] \in L^\infty(\Omega). $$

\noindent Now, we define the solution map $\mathbb{P}_\epsilon:  L^\infty(\Omega) \to  L^\infty(\Omega)$ as
$$ \mathbb{P}_\epsilon(w):=\mathbb{G}^\Omega\left[\frac{1}{(w^++\epsilon)^q}  - g(w_0) + \mu (w_0- w^+ {\bf{1}}_{B_\epsilon(w)})\right].$$
Using the same arguments as in Proposition \ref{pro:iter:exis:1}, we obtain, for any $w \in L^\infty(\Omega)$,
\begin{equation*} \label{P_1-bound1}
- \mu N_\epsilon \| \mathbb{G}^\Omega[1] \|_{L^\infty(\Omega)}^2 - \|\mathbb{G}^\Omega[g(w_0)]\|_{L^\infty(\Omega)}	\leq \mathbb{P}_\epsilon(w)(x)  \leq N_\epsilon \| \mathbb{G}^\Omega[1] \|_{L^\infty(\Omega)} \quad \text{for a.e. } x \in \Omega.
	\end{equation*}
Set
\begin{align*}
 \mathscr{F}_{\epsilon}:=\{  v \in L^\infty(\Omega):- \mu N_\epsilon \| \mathbb{G}^\Omega[1] \|_{L^\infty(\Omega)}^2 &- \|\mathbb{G}^\Omega[g(w_0)]\|_{L^\infty(\Omega)} \leq v(x) \\
    &\leq  N_\epsilon \| \mathbb{G}^\Omega[1] \|_{L^\infty(\Omega)} \text{ for a.e. } x \in \Omega \}.
\end{align*}
Then $\mathscr{F}_{\epsilon}$ is a closed, convex subset of $L^\infty(\Omega)$ and  $\mathbb{P}_\epsilon(\mathscr{F}_\epsilon) \subset \mathscr{F}_\epsilon$.
Now, by adopting the same arguments of continuity and compactness by Schauder fixed point theorem as in proof of Proposition \ref{pro:iter:exis:1}, there exists at least one fixed point $w_{\epsilon,1} \in \mathscr{F}_\epsilon$ of the map $\mathbb{P}_\epsilon$ such that $\mathbb{P}_\epsilon(w_{\epsilon,1})=w_{\epsilon,1}$. Finally, from Lemma \ref{Hopf}, we obtain $$w_{\epsilon,1}= \mathbb{G}^\Omega\left[\frac{1}{(w_{\epsilon,1}^++\epsilon)^q} + g(w_0) + \mu (w_0-w_{\epsilon,1}^+)\right],$$
hence $w_{\epsilon,1}$ satisfies
	\begin{equation*}
	\left\{
	\begin{alignedat}{2}
	{} \mathbb{L} w_{\epsilon,1} + \mu w_{\epsilon,1}^+ + g(w_0)
	& {}=\frac{1}{(w_{\epsilon,1}^++ \epsilon)^q} +  \mu w_0, \
	&& \quad\mbox{ in } \, \Omega, \\
	w_{\epsilon,1} & {}= 0
	&& \quad\mbox{ on } \partial\Omega\, \ \mbox{or in $\Omega^c$ if applicable}.
	\end{alignedat}
	\right.
	\end{equation*}
	
To prove the positivity and boundary behavior of the weak-dual solution $w_{\epsilon,1}$, we construct a positive sub weak-dual  solution and super weak-dual  solution of the above problem. For super weak-dual solution, define $\overline{w}:= w_0$ such that  $$\mathbb{L} \overline{w} + \mu \overline{w}^+ + g(w_0) \geq \frac{1}{(\overline{w}^+ + \epsilon)^q} + \mu w_0 \ \text{in} \ \Omega.$$
	For sub weak-dual solution, define $\underline{w}= k u_{\epsilon_k} \leq k w_0$ where $\epsilon_k:= \epsilon k^{-1}$, $u_{\epsilon_k}$ is the weak-dual solution of problem $(P_{\epsilon_k})$ and $k$ is chosen small enough such that
	$$k^q( \|u_{\epsilon_k} + \epsilon_k\|_{L^\infty(\Omega)}^q \|g(w_0)\|_{L^\infty(\Omega)} + k) \leq 1.$$ Then
	\begin{equation}
	\begin{split}
	\mathbb{L} \underline{w} + \mu \underline{w}^+ + g(w_0) \leq \frac{k}{(u^+_{\epsilon_k} + \epsilon_k)^q} + \mu \underline{w}^+ + g(w_0) &\leq \frac{k^{q+1}}{(\underline{w}^+ + \epsilon)^q} + \mu w_0 + \frac{1-k^{q+1}}{(\underline{w}^+ + \epsilon)^q} \\
	& \leq \frac{1}{(\underline{w}^+ + \epsilon)^q} + \mu w_0 \ \text{in} \ \Omega.
	\end{split}
	\end{equation}
	Then from Kato type inequality \eqref{kato+}, we obtain $0< \underline{w} \leq w_{\epsilon,1} \leq w_0= \overline{w}$. Now by defining $w_1:= \lim_{\epsilon \to 0} w_{\epsilon,1}$ and by using the boundary behavior of $u_\epsilon, w_0$ and by passing limits $\epsilon \to 0$, we obtain $w_1$ satisfies \eqref{eq:iter-abs} for $n=1$ and $w_1 \in \mathcal{A}_{\ast}(\Omega)$. The remaining proof can be done by induction and we omit it.
	\end{proof}

\begin{proof}[\textbf{Proof of Theorem \ref{thm:absor}}]
Let $u_* \in \mathcal{A}_{\ast}(\Omega)$ is the weak dual solution of \eqref{sing:pure} which act as a super weak-dual solution of the problem \eqref{sing+absorption}. Put $w_0=u_*$.

Now, to construct the weak-dual subsolution of the problem \eqref{sing+absorption}, because of $(g_q)$ we choose a constant $\ell>0$ small enough such that
	\[
	\ell^q (\|w_0\|_{L^\infty(\Omega)}^q \|g(\ell w_0)\|_{L^\infty(\Omega)} + \ell) \leq 1.
	\]
	Put $\underline w_0:= \ell w_0$ then $\underline w_0$ is a sub weak-dual solution of problem \eqref{sing+absorption} since it satisfies
	\begin{equation}
	\mathbb{L} \underline w_0 + g(\underline w_0)= \frac{\ell^{q+1}}{\underline w_0^q} + g(\ell \underline w_0) \leq \frac{1-\ell^{q+1}}{\underline w_0^q} + \frac{\ell^{q+1}}{\underline w_0^q} = \frac{1}{\underline w_0^q}.
	\end{equation}
	For $ n \geq 1$,  let $w_n \in \mathcal{A}_{\ast}(\Omega) \cap L^{1}(\Omega, \delta^\gamma)$ be the weak-dual solution of the iterative scheme \eqref{eq:iter-abs}.  The existence of the weak-dual solution $w_n \in \mathcal{A}_{\ast}(\Omega)$ is guaranteed by Proposition \ref{pro:iter:exis:2}.
	
	We claim that
	\begin{equation} \label{monotone-wn} 0< \underline w_0 \leq w_n \leq w_{n-1} \leq w_0 \ \text{ for all} \ n \geq 1.
	\end{equation}
	To this end, we choose $\mu$ large enough such that $t \to \mu t - g(t)$ is increasing in the interval $(0, \|w_0\|_{L^\infty(\Omega)}]$ due to assumption \eqref{g2}.
	Now by using the definition of $\underline w_0, w_0$ and Kato's inequality for $0 \not \equiv \xi \in C_c^\infty(\Omega)$ such that $\xi \geq 0 $ and $ \mathbb{G}^\Omega[\xi] \geq 0 $, we obtain
	\begin{equation}
	\begin{split}
	0 &\leq  \int_\Omega (\underline w_0-w_1)^+ \xi ~dx \\
	& \leq \int_{ \{\underline w_0 \geq w_1  \}} \left( \frac{l^{q+1}}{v_0^q}- \frac{1}{w_1^q}+ g(\ell w_0) + (g(w_0)-g(\underline w_0)) + \mu(w_1-w_0) \right) \mathbb{G}^\Omega[\xi]  ~dx \\
	& \leq \int_{ \{\underline w_0 \geq w_1  \}}  \left( \frac{1}{\underline w_0^q}- \frac{1}{w_1^q}+ (g(w_0)-g(\underline w_0)) +  \mu (\underline w_0-w_0) + \mu (w_1- \underline w_0) \right) \mathbb{G}^\Omega[\xi]  ~dx \leq 0
	\end{split}
	\end{equation}
	and
	\begin{equation}
	0 \leq \int_\Omega (w_1 - w_0)^+ \xi ~dx \leq \int_{ { \{w_1 \geq w_0  \}}}  \left( \frac{1}{w_1^q}- \frac{1}{w_0^q}- g(w_1) + \mu (w_0-w_1) \right) \mathbb{G}^\Omega[\xi]  ~dx \leq 0.
	\end{equation}
	The above estimate implies $\underline w_0 \leq w_1 \leq w_0$ in $\Omega$. By using a similar argument, we obtain \eqref{monotone-wn}.
	
	Next put $w:= \lim_{n \to \infty} w_n$. By using \eqref{monotone-wn} and by letting $n \to \infty$ in \eqref{notion:iter:ab}, we obtain $w \in \mathcal{A}_{\ast}(\Omega)$ and
	$$\int_{\Omega} w \xi ~dx + \int_{\Omega} g(w) \mathbb{G}^\Omega[\xi] ~dx = \int_{\Omega} \frac{1}{w^q} \mathbb{G}^\Omega[\xi] ~dx  \quad \forall \ \xi \in \delta^\gamma L^\infty(\Omega).$$
	This means that $w$ is a solution of \eqref{sing+absorption}. The proof is complete.
\end{proof}

\section{Appendix}\label{example:singpotential}
In this section, we discuss the Laplace operator with Hardy potential $L_\mu$ which is strongly singular on the boundary
\[
L_\mu := -\Delta - \frac{\mu}{\delta^2},
\]
where $\mu \in \mathbb{R}$ is a parameter and $\delta(x)=\dist(x,\partial \Omega)$. This operator has been investigated extensively in the literature (see \cite{Filippas_Moschini_tertikas_2007,Bandle_Moroz_Reichel_2008, Gkikas_Veron_2015, Gkikas_Nguyen_2019, Marcus_Nguyen_2017}) and acts as a special case of Schr\"odinger operators. The parameter $\mu$ is imposed to satisfy $\mu \in [0, C_H(\Omega))$ where $C_H(\Omega)$ is the best constant in the Hardy's inequality given by
$$
C_{H}(\Omega):= \inf_{\varphi \in H_0^1(\Omega) \setminus \{0\}} \frac{\int_{\Omega} |\nabla \varphi|^2 ~dx}{\int_{\Omega} |\varphi/\delta|^2 ~dx}.
$$

We will show that the operator $L_\mu$ satisfies assumptions mentioned in Subsection \ref{Main assumptions}. It was showed in \cite[Section 9]{HuNg_source} that assumptions \eqref{L1}--\eqref{L3} and \eqref{G1}--\eqref{G2} are fulfilled for $L_\mu$. In particular, in this case, \eqref{G2} holds for $s=1$ and $\gamma = \frac{1}{2} + \sqrt{\frac{1}{4} - \mu}$ and assumption \eqref{L3} becomes $\mathbb{H}(\Omega)=H_0^1(\Omega)$. We also have
\begin{equation} \label{innerH01}
\langle  u, v\rangle_{H_0^1(\Omega)} = \int_{\Omega} \nabla u \cdot \nabla v ~dx - \mu \int_{\Omega} \frac{u(x)v(x)}{\delta(x)^2} ~dx, \quad \forall u,v \in H_0^1(\Omega).
\end{equation}

Next, assumption \eqref{G3} holds true by using a similar argument as in the case of the sum of two RFLs in Subsection \ref{subsec:examples} together with the continuity and small and long time estimates of the heat kernel in \cite[(1.2) and (1.3)]{Filippas_Moschini_tertikas_2007}.

 We notice that assumption \eqref{L4} is only used in proving the Kato type inequality in Proposition \ref{Kato} and due to the presence of the potential, the operator $L_\mu$ does not satisfy the assumption \eqref{L4}. Now, in order to prove Kato type inequality and ensure that our results remains true for this operator, we replace assumption \eqref{L4} by a stronger assumption involving the singular potential as follows
\begin{enumerate}[label=(L4)$_\mu$,ref=(L4)$_\mu$]
	\item \label{L4-new} 	For any $u,v\in \mathbb{H}(\Omega) \cap L^\infty(\Omega)$ with $v \geq 0$ and any convex function $p \in C^{1,1}(\mathbb{R})$ with $p(0)=p'(0)=0$, there holds
	\[
	\left\langle p(u), v \right\rangle_{\mathbb{H}(\Omega)} \leq \left\langle  u,  p'(u)  v \right\rangle_{\mathbb{H}(\Omega)} + \mu \left\langle p'(u) u - p(u), \frac{v}{\delta^2}\right\rangle_{L^2(\Omega)}.
	\]
\end{enumerate}

We will show below that the operator $L_\mu$ satisfies assumption \ref{L4-new}. Take $u,v\in \mathbb{H}(\Omega) \cap L^\infty(\Omega)$ with $v \geq 0$ and a convex function $p \in C^{1,1}(\mathbb{R})$ with $p(0)=p'(0)=0$. Then by Lemma \ref{lip-prop}, $p(u), p'(u)v \in H_0^1(\Omega)$. Moreover, since $p$ is a convex function, it is twice differentiable a.e. Then, by applying \eqref{innerH01} for $u$ and $p'(u)v$, we get
\[
\begin{split}
    \left\langle  u,  p'(u)  v \right\rangle_{H_0^1(\Omega)} & + \mu \left\langle p'(u) u - p(u), \frac{v}{\delta^2}\right\rangle_{L^2(\Omega)} \\
    & = \int_{\Omega} |\nabla u|^2 p''(u) v ~dx + \int_{\Omega} p'(u) \nabla u \cdot \nabla v ~dx -\mu  \int_{\Omega} p(u) \frac{v}{\delta^2} ~dx  \\
    & \geq \int_{\Omega} \nabla (p(u)) \cdot \nabla v ~dx - \mu  \int_{\Omega} p(u) \frac{v}{\delta^2} ~dx  = 	\left\langle p(u), v \right\rangle_{H_0^1(\Omega)}.
\end{split}
\]
It means that \ref{L4-new} is fulfilled.

With suitable changes in the proof of Proposition \ref{Kato} in light of assumption \ref{L4-new}, we can prove the required Kato type inequality. For the sake of completeness, we give a sketch of the proof.

First assume that $f \in C_c^\infty(\Omega)$, $u=\mathbb{G}^{\Omega}[f]$, $p \in C^{1,1}(\mathbb{R})$ is a convex function such that $p(0)=p'(0)=0$ and $|p'| \leq 1$ and $\xi \in \delta L^\infty(\Omega)$ such that $\mathbb{G}^{\Omega}[\xi] \geq 0$. By  \eqref{equi:notion} and \ref{L4-new}, we obtain
 \begin{equation} \label{KT-2-n}
 \begin{split}
    \int_{\Omega}  f p'(u) \mathbb{G}^\Omega[\xi] ~dx  &=
     \left\langle  \mathbb{G}^{\Omega}[f],  p'(u)  \mathbb{G}^{\Omega}[\xi] \right\rangle_{H_0^1(\Omega)} = \left\langle  u,  p'(u)  \mathbb{G}^{\Omega}[\xi] \right\rangle_{H_0^1(\Omega)} \\
     &   \geq  \left\langle p(u), \mathbb{G}^{\Omega}[\xi]  \right\rangle_{H_0^1(\Omega)} -\mu \left\langle p'(u) u - p(u) , \frac{\mathbb{G}^{\Omega}[\xi]}{\delta^2} \right\rangle_{L^2(\Omega)} \\
     &=\int_{\Omega} p(u) \xi ~dx -\mu \int_{\Omega} (p'(u) u - p(u)) \frac{\mathbb{G}^{\Omega}[\xi]}{\delta^2} ~dx.
 \end{split}
 \end{equation}
The above inequality holds for any $f \in L^1(\Omega, \delta^\gamma)$ by repeating the approximation arguments in \textbf{Claim} 2 of Proposition \ref{Kato}.

Now, consider the sequence  $\{p_k\}_{k \in \mathbb{N}}$ defined in \eqref{eq:sequencepk}.
Then for every $k \in \mathbb{N}$, $p_k \in C^{1,1}(\mathbb{R})$ is convex, $p_k(0) = (p_k)'(0) = 0$ and $|(p_k)'| \le 1$.  Hence, employing \eqref{KT-2-n} with $p = p_k$, we have, for any $\xi \in \delta  L^{\infty}(\Omega), \mathbb{G}^{\Omega} [\xi]\ge 0$,
\begin{equation} \label{pku-new}
	\int_{\Omega} p_k(u) \xi ~dx - \mu \int_{\Omega} g_k(u) \frac{\mathbb{G}^{\Omega}[\xi]}{\delta^2} ~dx \le \int_{\Omega} f (p_k)'(u) \mathbb{G}^{\Omega}[\xi] ~dx
\end{equation}
where $g_k(t):= p_k'(t) t -p_k(t).$ Notice that $p_k(t) \to |t|$ and $(p_k)'(t) \to \sign (t)$ as $k \to +\infty$. Then, by using the Lebesgue dominated convergence theorem, we obtain
\[
\int_{\Omega} p_k(u) \xi ~dx \to \int_{\Omega} |u| \xi ~dx \quad \text{and} \quad \int_{\Omega} (p_k)'(u) \mathbb{G}^{\Omega}[\xi] f ~dx \to \int_{\Omega} \sign^+(u) \mathbb{G}^{\Omega}[\xi] f ~dx.
\]
Now, in order to prove the required inequality \eqref{kato||}, it is enough to claim that
\[
\int_{\Omega} g_k(u) \frac{\mathbb{G}^{\Omega}[\xi]}{\delta^2} ~dx \to 0 \ \text{as} \ k \to \infty.
\]
Let $\epsilon >0$ be given. Since $\frac{\mathbb{G}^{\Omega}[\xi]}{\delta} \in L^2(\Omega)$, there exists a $\eta = \eta(\epsilon) >0$ such that
\[
\text{if} \ A \ \text{is a measurable subset of} \ \Omega \ \text{with}\ |A| < \eta \text{ then } \int_{A} \left(\frac{\mathbb{G}^{\Omega}[\xi]}{\delta}\right)^2 ~dx < \epsilon.
\]
It is easy to observe that $g_k(u) \in H_0^1(\Omega)$ and $g'_k(u) \leq 1$ for all $k$. Let $\epsilon_1 >0$. Then, by using the Hardy inequality, and taking $\epsilon < \frac{\epsilon_1^2}{\|u\|^2_{H_0^1(\Omega)}}$ and a measurable set $A \subset \Omega$ such that $|A| < \eta$, we obtain
\[
\int_{A} g_k(u) \frac{\mathbb{G}^{\Omega}[\xi]}{\delta^2} ~dx \leq \left\|\frac{g_k(u)}{\delta}\right\|_{L^2(\Omega)} \left\|\frac{\mathbb{G}^{\Omega}[\xi]}{\delta}\right\|_{L^2(A)} \leq \epsilon^\frac{1}{2} \|g_k(u)\|_{H_0^1(\Omega)} \leq \epsilon^\frac{1}{2} \|u\|_{H_0^1(\Omega)} < \epsilon_1.
\]
Hence, the sequence $\{g_k(u) \mathbb{G}^{\Omega}[\xi] \delta^{-2}\}_{k \in \mathbb{N}}$ is equi-integrable. Finally, by using the fact that  $g_k(t) \to 0$ as $k \to \infty$ for every $t \in \mathbb{R}$ and employing the Vitaly convergence theorem, we get the desired result.
Inequality \eqref{kato+} follows by the same argument as in the proof of Proposition \ref{Kato}.

\subsection*{Acknowledgments}
The research of Vicen\c{t}iu D. R\u{a}dulescu was supported by a grant of
the Romanian Ministry of Research, Innovation and Digitization, CNCS-UEFISCDI,
project number PCE 137/2021, within PNCDI III. The research of Rakesh Arora and Phuoc-Tai Nguyen were supported by by Czech Science Foundation, Project GA22-17403S.

\bibliographystyle{siam}

\begin{thebibliography}{99}
%	
\bibitem{Aba_2015} N. Abatangelo, {\em  Large s-harmonic functions and boundary blow-up solutions for the fractional Laplacian}, Discrete Contin. Dyn. Syst. {\bf 35} (2015), no. 12, 5555--5607.

\bibitem{Abatangelo.al_2017} N. Abatangelo and L. Dupaigne, {\em Nonhomogeneous boundary conditions for the spectral fractional Laplacian},
Ann. Inst. H. Poincar\'e Anal. Non Lin\'eaire \textbf{34} (2017), 439--467.

\bibitem{AbaGomVaz_2019} N.~Abatangelo, D.~G\'omez-Castro and J.~L. V\'azquez, {\em Singular boundary behaviour and large solutions for fractional elliptic equations} (2019), arXiv:1910.00366.

\bibitem{adi_gia_santra_2018} Adimurthi,  J.  Giacomoni,  S.  Santra, {\em Positive  solutions  to  a  fractional  equation  with  singular nonlinearity}, J. Differential Equations \textbf{265} (2018), no. 4, 1191--1226.

\bibitem{Amb_2016} V. Ambrosio, {\em Ground states solutions for a non-linear equation involving a pseudo-relativistic Schr\"odinger operator}, J. Math. Phys. {\bf 57} (2016), no. 5, 051502, 18 pp.

\bibitem{Arora_Gia_Goel_Sree_2020} R. Arora, J. Giacomoni, D. Goel and K. Sreenadh, {\em Positive  solutions  of 1-D  Half-Laplacian equation with singular exponential nonlinearity}, Asymptotic Analysis \textbf{118} (2020), no. 1-2, 1-34.

\bibitem{Arora_Gia_Warn_2021} R. Arora, J. Giacomoni and G. Warnault, {\em Regularity results for a class of nonlinear fractional Laplacian and singular problems}, Nonlinear Differ. Equ. Appl. \textbf{28}, 30 (2021).

\bibitem{Arora_Rad_2021} R. Arora and V.D. R\u adulescu, {\em Combined effects in mixed local-nonlocal stationary problems} (2021), arXiv: 2111.06701.

\bibitem{Arora_Nguyen_2022} R. Arora and P. T. Nguyen, {\em Existence and multiplicity results for nonlocal Lane-Emden systems}, Acta Mathematica Vietnamica (2022), https://doi.org/10.1007/s40306-022-00485-y.

\bibitem{Bandle_Moroz_Reichel_2008} C. Bandle, V. Moroz and W. Reichel, {\em Boundary blowup type sub-solutions to semilinear elliptic equations with Hardy potential}, J. Lond. Math. Soc. 2 (2008), 503-523.

\bibitem{Barrios.al_2015} B. Barrios, I. De Bonis, M. Medina and I. Peral, {\em Semilinear problems for the fractional Laplacian with a singular nonlinearity}, Open Math. \textbf{13} (2015), 390--407.

\bibitem{Bhattacharyya-12} P. K. Bhattacharyya, {\em Distributions: generalized functions with applications in Sobolev spaces}, De Gruyter
2012.

\bibitem{Bogdan.al_2003} K. Bogdan, K. Burdzy and Z.~Q. Chen, {\em Censored stable processes}, Probab. Theory Related Fields {\bf 127}
(2003), 89--152.

\bibitem{Bogdan.al_2009} K. Bogdan, T. Byczkowski, T. Kulczycki, M. Ryznar, R. Song and Z. Vondracek,
{\em Potential analysis of stable processes and its extensions}, Springer, Berlin, 2009

\bibitem{BonFigVaz_2018} M.~Bonforte, A.~Figalli and J.~L. V\'azquez, {\em Sharp boundary behaviour of solutions to semilinear nonlocal elliptic equations}, Calc. Var. Partial Differential Equations \textbf{57} (2018), 34 pp.

\bibitem{BonSirVaz_2015} M. Bonforte, Y. Sire and J. L. V\'azquez, {\em Existence, uniqueness and asymptotic behaviour for fractional porous medium equations on bounded domains}, Discrete Contin. Dyn. Syst. {\bf 35} (2015), 5725--5767.

\bibitem{BonVaz_2014} M. Bonforte and J. L. V\'azquez, {\em Quantitative local and global a priori estimates for fractional nonlinear
diffusion equations}, Adv. Math. {\bf 250} (2014), 242--284.

\bibitem{BonVaz_2015} M. Bonforte and J. L. V\'azquez, {\em A priori estimates for fractional nonlinear degenerate diffusion equations on bounded domains}, Arch. Ration. Mech. Anal. {\bf 218} (2015), 317--362.

\bibitem{CafSil} L. Caffarelli and L. Silvestre, {\em An extension problem related to the fractional Laplacian}, Comm. Partial Differential Equations {\bf 32} (2007), 1245--1260.

 \bibitem{Can.al_2004} A. Canino and M. Degiovanni, {\em A variational approach to a class of singular semilinear elliptic equations}, J. Convex Anal. {\bf 11}
(2004), 147--162.

\bibitem{ChaGomVaz_2020}
H.~Chan, D.~G\'omez-Castro, and J.~L.~V\'azquez, {\em Blow-up phenomena in nonlocal eigenvalue problems: when theories of $L^1$ and $L^2$ meet}, J. Funct. Anal.  {\bf 280} (2021), no. 7, Paper No. 108845, 68 pp. 

\bibitem{Chen.al_2002} Z. Chen and P. Kim, {\em Green function estimate for censored stable processes}, Probab.
Theory Related Fields {\bf 124} (2002), 595--610.

\bibitem{Chen.al_2010} Z.~Q. Chen, P. Kim and R. Song, {\em Two-sided heat kernel estimates for censored stable-like processes}, Probab.
Theory Related Fields {\bf 44} (2010), 361--399.

\bibitem{Chen_Kim_Song_2012} Z.-Q. Chen, P. Kim and R. Song, {\em Dirichlet heat kernel estimates for $(-\Delta)^\frac{\alpha}{2} + (-\Delta)^\frac{\beta}{2}$}, Illinois J. Math. \textbf{54} (2012), 1357--1392.

\bibitem{Chen-song_1998} Z.~Q. Chen and R. Song, {\em Estimates on Green functions and Poisson kernels for symmetric stable processes}, Math. Ann. \textbf{312} (1998), 465--501.

\bibitem{Coc.al_1989} M.M. Coclite, G. Palmieri, {\em On a singular nonlinear Dirichlet problem}, Comm. Partial Differential Equations \textbf{14} (1989), 1315--1327.

\bibitem{Crandall.al_1977}  M.~G. Crandall, P.~H. Rabinowitz and L. Tartar, {\em On a Dirichlet problem with a singular nonlinearity}, Comm. Partial Differential Equations \textbf{2} (1977), 193--222.

%\bibitem{Davies_1980} E.B. Davies, One-parameter semigroups, Academic Press, London, 1980.
%
%\bibitem{Davies_1995} E.B. Davies, Spectral theory and differential operators. Cambridge University Press, 1995.

\bibitem{Diaz.al_1987} J.~I. D\'iaz, J.~M. Morel and L. Oswald, {\em An elliptic equation with singular nonlinearity}, Comm. Partial Differential Equations \textbf{12} (1987), pp. 1333--1344.


\bibitem{Fall_Felli_2015} M. M. Fall and V. Felli, {\em Unique continuation properties for relativistic Schr\"odinger operators with a singular
potential}, Discrete Contin. Dyn. Syst. {\bf 35} (2015), 5827--5867.

\bibitem{Filippas_Moschini_tertikas_2007} S. Filippas, L. Moschini, A. Tertikas, {\em Sharp two-sided heat kernel estimates for critical Schrodinger operators
on bounded domains}, Commun. Math. Phys. {\bf 273} (2007), 237--281.

\bibitem{Fulks.al_1960} W. Fulks and J. S. Maybee, {\em A singular nonlinear equation}, Osaka J. Math. \textbf{12} (1960), 1--19.

%\bibitem{Garofalo_2019} N. Garofalo, Fractional Thoughts, New Developments in the Analysis of Nonlocal Operators, Con- temp.
%Math., 723, Amer. Math. Soc., Providence, RI, 2019, 1-135.

\bibitem{Ghergu.al_2008} M. Ghergu and V.D. R\u adulescu,  {\em Singular elliptic problems:  bifurcation and asymptotic analysis}, Oxford University Press, 2008.

\bibitem{Giacomoni.al_2017}  J. Giacomoni,  T. Mukherjee and K. Sreenadh, {\em Positive  solutions  of  fractional  elliptic  equation with critical and singular nonlinearity}, Adv. Nonlinear Anal. \textbf{6} (2017), no. 3, 327-354.

\bibitem{Gkikas_Veron_2015} K. T. Gkikas and L. V\'eron, {\em  Boundary singularities of solutions of semilinear elliptic equations with critical
Hardy potentials}, Nonlinear Anal. {\bf 121} (2015), 469-540.

\bibitem{Gkikas_Nguyen_2019} K. T. Gkikas and P. -T. Nguyen, {\em On the existence of weak solutions of semilinear elliptic equations and systems with Hardy potentials}, Journal of Differential Equations {\bf 266} (2019), 833-875.

\bibitem{Gomes_1986} S.~M. Gomes, {\em On a singular nonlinear elliptic problem},
SIAM J. Math. Anal. \textbf{6} (1986), 1359-1369.

\bibitem{Gomez_Vaz_2019} D. G\'omez-Castro and J. L. V\'azquez, {\em The fractional Schr\"odinger equation with singular potential and measure
data}, Discrete Contin. Dyn. Syst. \textbf{39} (2019), 7113-7139.

\bibitem{Grisvard-2011} P. Grisvard, {\em Elliptic problems in nonsmooth domains}, Society for Industrial and Applied Mathematics, 2011.

\bibitem{haitao_2003} Y. Haitao, {\em Multiplicity and asymptotic behavior of positive solutions for a singular semilinear elliptic problem}, J. Differential Equations \textbf{189} (2003), 487-512.

\bibitem{Hernandez.al_2006} J. Hern\'andez and F.~J. Mancebo, {\em Singular elliptic and parabolic equations}, Handbook of Differential Equations \textbf{3} (2006), 317-400.

\bibitem{Hirano.al_2004} N. Hirano, C. Saccon and N. Shioji, {\em Existence  of  multiple  positive  solutions  for  singular  elliptic problems with concave and convex nonlinearities}, Adv. Differential Equations \textbf{9} (2004), 197-220.

\bibitem{HuNg_source}  P.~T. Huynh and P. T. Nguyen, {\em Semilinear nonlocal elliptic equations with source term and measure data}, 2021, to appear in Journal d'Analyse Math\'ematique.

\bibitem{HuNg_absorption} P.~T. Huynh and P. T. Nguyen, {\em Semilinear nonlocal elliptic equations with absorption term}, 2022, arXiv:2209.06502.

\bibitem{Kim_Lee_2007} P. Kim, Y.-R. Lee, {\em  Generalized 3G theorem and application to relativistic stable process on non-smooth open sets}, J. Funct. Anal. {\bf 246} (2007), 113--134


\bibitem{Kim_Song_Vondra_2020} P. Kim, R. Song and Z. Vondra\u cek, {\em On the boundary theory of subordinate killed L\'evy processes}, Potential Anal. {\bf 53} (2020), 131-181.

\bibitem{Kim_Song_Vondrac_2020} P. Kim, R. Song, and Z. Vondra\u cek, {\em On potential theory of Markov processes with jump kernels decaying at the boundary}, Potential Anal. (2021), 1-64.

\bibitem{Lazer.al_1991} A.~C. Lazer and P.~J. McKenna, {\em On a singular nonlinear elliptic boundary value problem}, Proc. Amer. Math. Soc. \textbf{111} (1991), 721-730.

%\bibitem{Marcus_Veron_2014} M. Marcus and Laurent V\'eron, Nonlinear second order elliptic equations involving measures, Walter de Gruyter GmbH and Co. KG, 2014.

\bibitem{Marcus_Nguyen_2017} M. Marcus and P.-T. Nguyen, {\em Moderate solutions of semilinear elliptic equations with Hardy potential}, Ann.
Inst. H. Poincar\'e Anal. Non Lin\'eaire {\bf 34} (2017):69-88.

\bibitem{Mukherjee.al_2016} T. Mukherjee and K. Sreenadh, {\em Fractional elliptic equations with critical growth and singular nonlinearities}, Electron J. Differ Equ. \textbf{54} (2016), 1--23.

\bibitem{Nachman.al_1986} A. Nachman and A. Callegari, {\em A nonlinear singular boundary value problem in the theory of pseudoplastic fluids}, SIAM J. Appl. Math. \textbf{28} (1986), 271-281.

\bibitem{Nezza_Palatucci_Valdi_2012} E. D. Nezza, G. Palatucci and E. Valdinoci, {\em Hitchhiker's guide to the fractional Sobolev spaces}, Bull. Sci. Math. \textbf{136} (2012), 521--573.

\bibitem{delpino_1992} M. del Pino, {\em A global estimate for the gradient in a singular elliptic boundary value
problem}, Proc. Roy. Soc. Edinburgh {\bf 122A} (1992), 341-352.

\bibitem{Ryznar_2003} M. Ryznar, {\em Estimates of Green function for relativistic $\alpha$-stable process}, Potential Anal. {\bf 17} (2003), 1-23.

\bibitem{Sil} L. Silvestre, {\em Regularity of the obstacle problem for a fractional power of the Laplace operator}, Comm. Pure Appl. Math. {\bf 60} (2007), no. 1, 67-112.

\bibitem{Song.al_2003} R. Song and Z. Vondra\v cek, {\em Potential theory of subordinate killed Brownian motion in a domain}, Probab. Theory Related Fields {\bf 125} (2003), 578--592.


\bibitem{Stuart_1974} C. A. Stuart, {\em Existence theorems for a class of nonlinear integral equations}, Math. Z. \textbf{137} (1974), 49-66.
\bibitem{Stuart_1976} C.A. Stuart, {\em Existence and approximation of solutions of nonlinear elliptic equations},
Math. Z. \textbf{147} (1976), 53-63.

\bibitem{Zheng.al_2004} Z. Zhang and J.Cheng, {\em  Existence and optimal estimates of solutions for singular nonlinear Dirichlet problem},  Nonlinear Anal. \textbf{57} (2004), 473-484.

%\bibitem{BreVaz-97} {\sc H. Brezis and J.-L. V\'azquez}, {\em Blow-up solutions of some nonlinear elliptic problems}, Rev. Mat. Univ. Madrid {\bf 10} (1997), 443--469.
%
%
%\bibitem{Cazenave-03} {\sc T. Cazenave}, {\em Semilinear Schrodinger Equations}, Courant Lecture Notes Volume 10 (2003), American Mathematical Society.
%
%
\end{thebibliography}

\end{document}